\def\tang{\ThisStyle{\abovebaseline[0pt]{\scalebox{-1}{$\SavedStyle\perp$}}}}
\newtheorem{theorem}{Theorem}[section]
\newtheorem{proposition}[theorem]{Proposition}
\newtheorem{lemma}[theorem]{Lemma}
\newtheorem{claim}[theorem]{Claim}
\newtheorem{definition}[theorem]{Definition}
\newtheorem{remark}[theorem]{Remark}
\newcommand{\at}[2][]{#1|_{#2}}
\title{Morse Index Bound for Minimal Torus }
\author{Yuchin Sun}
\begin{document}
	\maketitle
	\begin{abstract}
The min-max construction of minimal spheres using harmonic replacement is introduced by Colding and Minicozzi \cite{CD} and generalized by Zhou \cite{minimaltorus} to conformal harmonic torus. We prove that the Morse index of the min-max conformal harmonic torus is bounded by one.
	\end{abstract}
\section{Introduction}

Morse theory studies the topology of a manifold by analysing proper functions defined on it. Finite-dimensional Morse theory was developed by Morse \cite{Morse} to study geodesics. A natural extension of it would be a Morse theory for harmonic surfaces. Colding and Minicozzi proved the existence for min-max harmonic spheres whose sum of areas realizes the width \cite{CD}, the Morse index of such min-max harmonic sphere is bounded by one \cite{YS}. The approach by Colding and Minicozzi \cite{CD} was extended by Zhou to min-max minimal torus \cite{minimaltorus} and min-max minimal surface of genus $g\geq 2$  \cite{minmaxgenus}. We prove that the Morse index of such min-max tori is bounded by one.
\begin{theorem}\label{main}
Let $M$ be a closed Riemannian manifold of dimension at least three. For a generic choice of metric on $M$ and any homotopically nontrivial path $\beta\in\Lambda$, if the corresponding width $W$ is positive. Then there are possibly a conformal torus $v_0:\mathcal{T}_{\tau_0}\to M$ and finitely many harmonic spheres $v_j:S^2\to M$ such that 
\begin{enumerate}
\item $\text{Area}(v_0)+\sum_j\text{Area}(v_j)=W,$
\item $\text{Index}(v_0)+\sum_{j} Index(v_j)\leq 1.$
	\end{enumerate}
\end{theorem}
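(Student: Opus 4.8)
The strategy is to run the min-max construction of Zhou (following Colding–Minicozzi's harmonic replacement scheme) and perform the bookkeeping on the Morse index, mimicking the sphere case in \cite{YS}. Let me think about how the argument should be organized.

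First, I would recall the setup: $\Lambda$ is the space of paths of sweepouts by (almost-conformal) maps from a torus $\mathcal{T}_\tau$ into $M$, with the conformal structure $\tau$ allowed to vary, and $W$ is the associated width. The key tool is the harmonic replacement process applied along a tightened minimizing sequence of sweepouts; this produces a sequence of sweepouts $\sigma^n$ whose maximal-energy slices converge (after passing to a subsequence and bubbling) to a conformal harmonic torus $v_0$ together with finitely many harmonic bubble spheres $v_j$, with the energy identity giving statement (1). I would use genericity of the metric to guarantee that all these limit objects are nondegenerate (no Jacobi fields, transverse to the relevant variations), as in the generic Morse theory framework.

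The heart of the argument is the index bound. The plan is to argue by contradiction: suppose $\mathrm{Index}(v_0)+\sum_j \mathrm{Index}(v_j)\geq 2$. Then there is a $2$-dimensional space of variations strictly decreasing the area (energy) of the limit configuration. I would then transplant this $2$-parameter family of competitors back onto the maximal slices $\sigma^n(t_n)$ of the sweepout for large $n$ — using the convergence (smooth away from finitely many bubble points, plus the precise neck/bubble analysis) to glue the destabilizing variations of $v_0$ and each $v_j$ into a single $2$-parameter deformation of $\sigma^n(t_n)$ that strictly lowers energy below $W-\varepsilon_0$ on a neighborhood of the maximum. The point is that a $2$-parameter area-decreasing family lets one "go around" the maximal slice: one can use it to modify the path $\sigma^n$ near its maximum to a homotopic path with strictly smaller max energy, contradicting that the sweepouts realize the width $W$. (With only a $1$-parameter family one cannot in general decrease the max along a path — this is exactly why the bound is $1$ and not $0$.)

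The main obstacle, and where most of the technical work lies, is this gluing/transplanting step in the presence of bubbling: one must control the interaction between the varying conformal structure $\tau$ on the torus, the degenerating necks connecting $v_0$ to the bubbles $v_j$, and the destabilizing variations, ensuring the second-variation estimates survive the gluing with uniform constants. In particular, handling the conformal modulus degeneration of $\mathcal{T}_\tau$ (which has no analogue in the sphere case of \cite{YS}) requires care: one needs the no-neck-energy / energy-identity statements from \cite{minimaltorus} to be quantitatively strong enough that the glued variation's energy Hessian is controlled by the sum of the individual Hessians plus an error tending to $0$. I would also need to confirm that the tightening process does not itself create index, and that the limit torus, if it degenerates to a sphere or a point, is accounted for by the bubble tree (so that (1) and (2) still read correctly in degenerate cases). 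Once the uniform second-variation estimate for the glued family is in hand, the contradiction with the width being realized is routine, completing the proof.
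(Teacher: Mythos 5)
Your local perturbation step matches the paper's: Lemma~\ref{unstable} (the ``unstable lemma'') constructs cutoff vector fields $\tilde X_{j,i}$ that transplant the destabilizing variations of $v_0$ and each bubble $v_j$ onto a nearby slice while controlling the second variation, exactly as you describe, and the need for $k\geq 2$ (so that a path $H'_l(t,1)\in B^k$ can avoid the curve $m(\gamma_l(t))$ of maxima) is also the reason the bound is $1$. But your global contradiction argument has a genuine gap. After you deform the sweepout away from the collection $\{(v_0,\mathcal{T}_{\tau_0}),\{v_j\}\}$, you have a \emph{new} minimizing sequence, and Theorem~\ref{minmaxtorus} only guarantees that some subsequence of slices bubble-converges to \emph{some} collection of harmonic maps realizing $W$ --- it gives no uniqueness. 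That new limit could again have total index $\geq 2$, so a single round of ``suppose not, deform, contradict'' does not close the loop.

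The paper fixes this with a structural step you do not mention: it first shows (Proposition~\ref{countable} in the appendix, via the bumpy metric theorems of \cite{BW} and \cite{JD}) that the set of equivalence classes of conformal harmonic tori with energy $\leq W$ is \emph{countable}. This is nontrivial precisely because the Teichm\"uller space of tori is uncountable and there is no codimension-one compactness theorem to appeal to, unlike the embedded hypersurface setting --- the introduction flags this as one of the two main novelties of the paper. Given countability, the proof enumerates all collections $\{[(v_0^n,\mathcal{T}_{\tau_0^n})],[v_j^n]\}$ with total energy $W$ and index $\geq 2$, applies the Deformation Theorem~\ref{deformation} inductively to push the minimizing sequence $\bar\epsilon_n$-away from the $n$-th such collection for all large $l$, and then diagonalizes to produce a single minimizing sequence avoiding \emph{every} index-$\geq 2$ collection; only then does Theorem~\ref{minmaxtorus} force the limit to have index $\leq 1$. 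Without the countability-plus-diagonalization machinery, your argument by contradiction does not terminate.
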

Morse index of min-max minimal surface is studied under many different settings. We list a few in the following and compare those result with Theorem \ref{main}. Sacks and Uhlenbeck introduced $\alpha$-energy \cite{SU}, which can be perturbed to be a Morse function. The corresponding min-max critical points $f_\alpha$ of $\alpha$-energy converge to a harmonic sphere $f_\infty$ as $\alpha\to 1$. The Morse index of the harmonic sphere $f_\infty$ is bounded by the limit inferior of Morse index of $f_\alpha$ as $\alpha\to 1$, this result serves as a key step in the proof of the sphere theorem \cite{MD}. However, the harmonic sphere $f_\infty$ fails to realize the energy identity in general. An alternative to $\alpha$-energy of Sacks and Uhlenbeck is done by Rivi\`{e}re using viscosity method \cite{Viscosity}. The parametric approach produces
immersed (rather than embedded) minimal surfaces with possible branch points for arbitrary codimension. The Morse index bound of such immersed minimal surfaces is shown in \cite{VPT}. We like to remark here that the Morse index in \cite{VPT} is for the area functional, while the Morse index in Theorem \ref{main} is for the energy functional. For a conformal harmonic map, its Morse index for area is bounded by its Morse index for energy plus a constant depending on the genus and branch points \cite{Mario}. Zhou and Cheng \cite{cheng2021existence} also use the idea of perturbed energy and generalize the
existence theory of branched immersed minimal 2-spheres by Sacks-Uhlenbeck \cite{SU} to the CMC setting in Riemannian 3-spheres. They obtain the Morse index bound by showing the Morse index bound passes through the limit as the perturbed Dirichlet energy approaches the normal energy. On the other hand, Almgren-Pitts min-max theory gives the existence of embedded minimal hypersurfaces in a closed manifold of dimension between three and seven \cite{AP}. The Morse index bound of such min-max hypersurfaces is given by Marques and Neves \cite{MN}, whom later show that there exist such min-max hypersurfaces whose Morse index is the number of parameters  (dimension of the families used in the min-max process), under the assumption of multiplicity one \cite{MN2}. The multiplicity one conjecture is proved by Zhou \cite{Mul1} and states that for a generic metric on a closed manifold $M^n$, $3\leq n\leq 7$, there exists an embedded minimal hypersurface of multiplicity one. The conjecture is also shown to be true under the Allen-Cahn setting by Chodosh and Mantoulidis \cite{CC} in dimension three. 

Theorem \ref{main} is a generalization of the author's previous work \cite{YS} of proving the Morse index bound for the min-max harmonic sphere. The novelty of Theorem \ref{main} lies in two points. First, while the sphere has a unique conformal structure, the one parameter family of mappings from tori has varying conformal structures. The technical step of proving Theorem \ref{main} is constructing the vector fields corresponding to the varying conformal structures, such that we can perturb the one parameter family of mappings away from those minimal tori with Morse index larger than one. Second, we show that the space of conformal harmonic tori in a closed manifold of dimension at least three is countable. Unlike the compactness result for the space of embedded minimal hypersurfaces \cite{BS}, we don't have codimension one condition one requirement for the space of conformal harmonic tori. Moreover, unlike the harmonic sphere which has a unique conformal structure. The Teichm\"{u}ller space for tori is uncountable. We overcome this difficulty by using the bumpy metric theorem for immersed minimal surfaces by \cite{BW} and for conformal harmonic maps by \cite{JD}.

The organization of the paper is as follows. In section 2, we give the basic definition of harmonic map, Morse index, and bubble convergence for varying conformal structures. In section 3, we prove a technical lemma \ref{unstable}. In section 4, we state Zhou's existence result \cite{minimaltorus} in the form of bubble convergence. In section 5, we prove the main result Theorem \ref{main}.

\section{Preliminaries}
\subsection{Harmonic Maps}
Let $(\Sigma,h)$ be a Riemann surface with a metric $h=\lambda^2dzd\Bar{z}$ in conformal coordiantes $z=x+\sqrt{-1}y$. Let $(M,g)$ be a closed Riemannian manifold of dimension $d$, and let its metric in local coordinated be given by $g_{ij}$, with Christoffel symbols $\Gamma^i_{kl}.$

For $u\in W^{1,2}(\Sigma,M)$, the energy of $u$ on $\Sigma$ is
\begin{equation}\label{energy}
E(u,\Sigma)=\frac{1}{2}\int_{\Sigma}g_{ij}(u^i_xu^j_x+u^i_yu^j_y)dxdy.    
\end{equation}
A solution of the corresponding Euler-Lagrange equations
\begin{equation}\label{EL}
    \Delta u^i+\Gamma^i_{kl}(u^k_xu^l_x+u^k_yu^l_y)=0,\quad i=1,...,d,
\end{equation}
is called a harmonic map. Note that $\eqref{energy}$ and $\eqref{EL}$ are conformally invariant.
If we isometrically embed $M$ into some Euclidean space $\mathbb{R}^N$, then \eqref{EL} can be written as the following:
\begin{equation}\label{EL2}
    \Delta u+A(u)(\nabla u,\nabla u)=0,
\end{equation}
where $A(\cdot,\cdot)$ is the second fundamental form of $M$ in $\mathbb{R}^N$. Any $u\in W^{1,2}(\Sigma,M)$ that satisfies \eqref{EL2} weakly is smooth.
We introduce  \textit{nearest point projection} $\Pi:\mathbb{R}^N\to M$ which maps a point $x\in\mathbb{R}^N$ to the nearest point of $M$. There is a tubular neighborhood of $M$, $M_\delta=\big\{x\in\mathbb{R}^N:\text{ dist}(x,M)<\delta\big\},$ on which $\Pi$ is well-defined and smooth. For a given $X\in C^{\infty}(\Sigma,\mathbb{R}^N)$, we consider the variation of $u$ with respect to $X$ defined as the following:
\begin{equation}\label{Variation}
	\Pi\circ(u+sX),\quad s\in\mathbb{R}.
\end{equation}
$\Pi\circ(u+sX)$ is well-defined for $s$ sufficiently small such that $(u+sX)(\Sigma)\subset M_\delta$. The Taylor polynomial expansion of $\Pi$ is the following:
\begin{equation}\label{Taylor}
\Pi\circ(u+sX)=u+sd\Pi_u(X)+\frac{s^2}{2}\text{Hess}\Pi_u(X,X)+o(s^3).
\end{equation}
From \cite[2.12.3]{LS}, we know that the following properties of the nearest point projection $\Pi$ hold:
\begin{equation}\label{proj}
d\Pi_u(X)=p_u(X),    
\end{equation}
where $p_u$ denotes orthogonal projection of $\mathbb{R}^N$ onto $T_uM$,
\begin{equation}\label{proj1}
v_1\cdot\text{Hess}\Pi_u(v_2,v_3)=\frac{1}{2}\sum v_{\sigma_1}^\perp\cdot\text{Hess}\Pi_u(v^{\tang}_{\sigma_2},v^{\tang}_{\sigma_3}),\quad v_1,v_2,v_3\in\mathbb{R}^N,   
\end{equation}
where $v^\perp\at[]{u}=p_u(v)$, $v^{\tang}=v-v^\perp$; the sum on the right is over all $6$ permutations $\sigma_1,\sigma_2,\sigma_3$ of $1,2,3$ and $\text{Hess}\Pi_u$ denotes the Hessian of $\Pi$ at $u$, 
\begin{equation}\label{proj2}
  \text{Hess}\Pi_u(v_1,v_2)=-A_u(v_1,v_2),\quad v_1,v_2\in T_uM,
\end{equation}
where $A_u$ is the second fundamental form of $M$ at $u$. By applying $\nabla$ to \eqref{Taylor} we have
\begin{equation}
\begin{split}
\nabla\Big(\Pi\circ(u+sX)\Big)=&\nabla u+s(d\Pi_u(\nabla X)+\text{Hess}\Pi_u(X,\nabla u))\\
	&+\frac{s^2}{2}\big(2\text{Hess}\Pi_u(X,\nabla X)+\nabla^3\Pi_u(X,X,\nabla u)\big)+o(s^3),
\end{split}
\end{equation}
and the energy of $\Pi\circ(u+sX)$ is
\begin{equation}\label{energyTaylor}
    \begin{split}
        E(\Pi\circ(u+sX),\Sigma)=&\int_\Sigma\langle\nabla\Big(\Pi\circ(u+sX)\Big),\nabla\Big(\Pi\circ(u+sX)\Big)\rangle\\
        =&\int_{\Sigma}\langle\nabla u,\nabla u\rangle \\
        +2s&\int_{\Sigma}\langle\nabla u,d\Pi_u(\nabla X)\rangle+\langle\nabla u,
        \text{Hess}\Pi_u(X,\nabla u)\rangle \\
        +s^2&\Big\{\int_{\Sigma}\langle d\Pi_u(\nabla X),d\Pi_u(\nabla X)\rangle \\
	&+\int_{\Sigma}2\langle \text{Hess}\Pi_u(X,\nabla u),d\Pi_u(\nabla X)\rangle \\
	&+\int_{\Sigma}\langle \text{Hess}\Pi_u(X,\nabla u),\text{Hess}\Pi_u(X,\nabla u)\rangle \\
	&+\frac{1}{2}\int_{\Sigma}\langle\nabla u,\big( 2\text{Hess}\Pi_u(X,\nabla X)+\nabla^3\Pi_u(X,X,\nabla u)\big)\rangle \Big\}\\
        +&o(s^3).
    \end{split}
\end{equation}
Then we have
\begin{equation}\label{firstvariation}
    \begin{split}
        \frac{d}{ds}E(\Pi\circ(u+sX),\Sigma)=&\frac{d}{ds}\int_\Sigma\langle\nabla\Big(\Pi\circ(u+sX)\Big),\nabla\Big(\Pi\circ(u+sX)\Big)\rangle\\
        =&2\int_{\Sigma}\langle\nabla u,d\Pi_u(\nabla X)\rangle+\langle\nabla u,
        \text{Hess}\Pi_u(X,\nabla u)\rangle+o(s).
    \end{split}
\end{equation}
For such a variation we have that the equation $\frac{d}{ds}\at[\Big]{s=0} E(\Pi\circ(u+sX),\Sigma)=0$, and using \eqref{proj}, \eqref{proj1}, \eqref{proj2} gives the following integral identity
\begin{equation}
    \int_\Sigma\langle X, \Delta u+A_u(\nabla u,\nabla u)\rangle=0,
\end{equation}
which coincides with \eqref{EL2}. Now we look at second variation of energy:
\begin{equation}\label{energyTaylor2}
    \begin{split}
        \frac{d^2}{ds^2}E(\Pi\circ(u+sX),\Sigma)=&2\Big\{\int_{\Sigma}\langle d\Pi_u(\nabla X),d\Pi_u(\nabla X)\rangle \\
	&+\int_{\Sigma}2\langle \text{Hess}\Pi_u(X,\nabla u),d\Pi_u(\nabla X)\rangle \\
	&+\int_{\Sigma}\langle \text{Hess}\Pi_u(X,\nabla u),\text{Hess}\Pi_u(X,\nabla u)\rangle \\
	&+\int_{\Sigma}\langle\nabla u,\big( 2\text{Hess}\Pi_u(X,\nabla X)+\nabla^3\Pi_u(X,X,\nabla u)\big)\rangle \Big\}\\
	&+o(s).
    \end{split}
\end{equation}
If $u$ is harmonic and assume that $X(x)\in T_{u(x)}M$, $\forall x\in\Sigma$. By \eqref{EL2}, \eqref{proj1}, \eqref{proj2} we have the following 
\begin{equation}
    \langle \text{Hess}\Pi_u(X,\nabla u),d\Pi_u(\nabla X)\rangle=0,
\end{equation}
\begin{equation}
    \langle \text{Hess}\Pi_u(X,\nabla u),\text{Hess}\Pi_u(X,\nabla u)\rangle=\langle A(X,\nabla u),A(X,\nabla u)\rangle,
\end{equation}
\begin{equation}
    \begin{split}
\int_\Sigma\langle\nabla u,\big( 2\text{Hess}\Pi_u(X,\nabla X)+\nabla^3\Pi_u(X,X,\nabla u)\big)\rangle&=\int_\Sigma\langle\nabla u,\nabla(\text{Hess}\Pi_u(X,X))\rangle\\
&=-\int_\Sigma\langle\Delta u,\text{Hess}\Pi_u(X,X)\rangle\\
&=\int_\Sigma\langle A(\nabla u,\nabla u), \text{Hess}\Pi_u(X,X)\rangle\\
&=-\int_\Sigma\langle A(\nabla u,\nabla u),A(X,X)\rangle.
\end{split}
\end{equation}
Then the second variation of a harmonic map $u$ is 
\begin{equation}
\begin{split}
    \frac{d^2}{ds^2}\at[\Big]{s=0}E(\Pi\circ(u+sX),\Sigma)=&\int_\Sigma\langle d\Pi_u(\nabla X),d\Pi_u(\nabla X)\rangle\\
    -&\int_\Sigma\langle A(\nabla u,\nabla u),A(X,X)\rangle-\langle A(X,\nabla u),A(X,\nabla u)\rangle\\
    =&\int_\Sigma\langle\nabla X,\nabla X\rangle-\langle R^M(\nabla u,X)X,\nabla u\rangle,
\end{split}
\end{equation}
where $R^M(\cdot,\cdot)$ is the Riemann curvature tensor of $M$.
\begin{definition}[Index Form]
The index form of a harmonic map $u:\Sigma\to M$ is defined as the following
\begin{equation}\label{index form}
    I(X,Y)=\int_{\Sigma}\langle\nabla X,\nabla Y \rangle-\int_{\Sigma} \langle R^M(\nabla u,X)Y,\nabla u\rangle,\quad X,Y\in\Gamma(u^{-1}TM),
\end{equation}
where $u^{-1}TM$ is a bundle over $\Sigma$ with metric $g_{ij}(u(x))$.
\end{definition}
\begin{definition}[Index]\label{indexdef} The index of a harmonic map $u:\Sigma\to M$ is the maximal dimension of the subspace $X$ of $\Gamma(u^{-1}TM)$ on which the index form is negative definite.
\end{definition}
\subsection{Convergence Result for Harmonic Torus}
Any flat torus $T^2$ can be viewed as the quotient space of $\mathbb{C}$ moduled by a lattice generated by bases $\{\omega_1,\omega_2\}$. After some conformal linear transformation, we can assume $\omega_1=1$ and $\omega_2=\tau\in\mathbb{H}$. Let $T_\tau$ be the torus by gluing edges of the lattice $\{1,\tau\}$ with the plan metric $dzd\bar{z}$. Denote $\iota_{\tau_1,\tau_2}$ to be the diffeomorphism from $\mathcal{T}_{\tau_1}$ to $\mathcal{T}_{\tau_2}$, which is the quotient map of the linear map of $\mathbb{C}$ keeping $1$ and sending $\tau_1$ to $\tau_2$. We denote $T_0$ by $T_{\sqrt{-1}}$ and $\iota_{\tau}$ to be the linear map of $\mathbb{C}$ keeping $1$ and sending $\sqrt{-1}$ to $\tau$. For a given manifold $M$, we denote $(u,T_\tau)$ by the map $u:T_\tau\to M$ with the flat torus $T_\tau$ as its domain. \begin{proposition}
\cite[Proposition 3.1]{minimaltorus}
Let $g$ be a $C^1$ metric on $T_0$. There exists a unique mark $\tau\in\mathbb{H}$, and a unique orientation preserving $C^{1,\frac{1}{2}}$ conformal diffeomorphism $h:T_\tau\to (T_0,g)$
, such that
$h$ is isotopic to $(\iota_\tau)^{-1}$. \end{proposition}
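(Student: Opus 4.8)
\emph{Proof strategy.} This is the uniformization theorem for the torus, refined to record the regularity of the uniformizing map and rigidified by the choice of marking; the ambient manifold $M$ plays no role. I would deduce it from the theory of the Beltrami equation on the plane — equivalently, from the existence of isothermal coordinates together with uniformization — by working on the universal cover $\mathbb{C}$, solving one Beltrami equation there, and letting the translation symmetries of the domain generate the target lattice.

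Concretely, lift $g$ to a $\Gamma_0$-periodic metric $\tilde g=E\,dx^2+2F\,dx\,dy+G\,dy^2$ on $\mathbb{C}$, where $\Gamma_0=\mathbb{Z}\oplus\mathbb{Z}\sqrt{-1}$, and form its Beltrami coefficient
\[
\mu=\frac{E-G+2\sqrt{-1}F}{E+G+2\sqrt{EG-F^2}},
\]
with the sign chosen so that an orientation-preserving local homeomorphism $f$ with $f_{\bar z}=\mu f_z$ is conformal for $\tilde g$. Since $g\in C^1$ and $EG-F^2$ is bounded below, $\mu$ is $\Gamma_0$-periodic, $\|\mu\|_{L^\infty}<1$, and $\mu\in C^{0,1/2}$. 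The theory of the Beltrami equation then provides a quasiconformal homeomorphism $f:\mathbb{C}\to\mathbb{C}$ with $f(0)=0$ solving $f_{\bar z}=\mu f_z$, any two solutions differing by post-composition with an affine automorphism of $\mathbb{C}$; and because $\mu$ is Hölder, elliptic (Schauder) regularity upgrades $f$ to a $C^{1,1/2}$ diffeomorphism with $f_z$ nowhere vanishing, so that $f^{-1}$ is $C^{1,1/2}$ as well.

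Next, exploit periodicity. For $\gamma\in\Gamma_0$ the map $z\mapsto f(z+\gamma)$ solves the same Beltrami equation, hence equals $\phi_\gamma\circ f$ for an affine automorphism $\phi_\gamma(w)=a_\gamma w+b_\gamma$ of $\mathbb{C}$; as $\gamma\mapsto\phi_\gamma$ is a homomorphism of the abelian group $\Gamma_0$ and every $\phi_\gamma$ with $\gamma\neq 0$ is fixed-point free (by injectivity of $f$), we must have $a_\gamma\equiv 1$. Thus $f(z+\gamma)=f(z)+b(\gamma)$ for an injective homomorphism $b:\Gamma_0\to\mathbb{C}$ whose image $\Lambda$ is a full-rank lattice (the $\Gamma_0$-action on $\mathbb{C}$ is properly discontinuous and cocompact and $f$ conjugates it to translation by $\Lambda$), and $f$ descends to a conformal $C^{1,1/2}$ diffeomorphism $\bar f:(T_0,g)\to\mathbb{C}/\Lambda$. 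Rescaling $f$ by $1/b(1)$ we may assume $\Lambda=\mathbb{Z}\oplus\mathbb{Z}\tau$; since $\bar f$ is orientation-preserving and carries the homology classes $[1],[\sqrt{-1}]$ to $[1],[\tau]$, it follows that $\mathrm{Im}\,\tau>0$, i.e.\ $\tau\in\mathbb{H}$, and that $\bar f$ induces the same map on $H_1$ as $\iota_\tau$. Orientation-preserving diffeomorphisms of $T^2$ inducing the same map on $H_1$ are isotopic, so $\bar f\simeq\iota_\tau$, and therefore $h:=\bar f^{-1}:T_\tau\to(T_0,g)$ is an orientation-preserving conformal $C^{1,1/2}$ diffeomorphism isotopic to $(\iota_\tau)^{-1}$ with $h(0)=0$. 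This proves existence.

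For uniqueness, suppose $h':T_{\tau'}\to(T_0,g)$ is another such map. Then $h'^{-1}\circ h:T_\tau\to T_{\tau'}$ is a conformal diffeomorphism of flat tori, hence lifts to an affine map $z\mapsto az+b$ of $\mathbb{C}$ with $a(\mathbb{Z}\oplus\mathbb{Z}\tau)=\mathbb{Z}\oplus\mathbb{Z}\tau'$. The isotopy hypotheses give $(h'^{-1}\circ h)_{*}=(\iota_{\tau'}\circ\iota_\tau^{-1})_{*}$ on $H_1$, which in the bases $\{[1],[\tau]\}$ and $\{[1],[\tau']\}$ is the identity matrix; comparing with the induced action $[\lambda]\mapsto[a\lambda]$ forces $a=1$ and hence $\tau'=a\tau=\tau$, while the residual translation $b$ is eliminated by the normalization $h(0)=0$ (which $\iota_\tau^{-1}$ also satisfies), giving uniqueness of $h$. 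I do not anticipate a serious obstacle; the two points that need genuine care are extracting the sharp $C^{1,1/2}$ regularity of $f$ and $f^{-1}$ from the Hölder regularity of $\mu$, and the homological bookkeeping in this last paragraph, which is exactly where the isotopy hypothesis is used.
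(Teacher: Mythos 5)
The paper cites this result as \cite[Proposition~3.1]{minimaltorus} and does not reproduce a proof, so there is no in-paper argument to compare against; I can only assess your proof on its own terms. Your approach via the Beltrami equation on the universal cover is the standard (and essentially the only) route to uniformizing a torus, and it is the argument one would expect Zhou to use as well, so I would count this as ``same approach.'' The reduction of the periodicity to the fixed-point-free affine automorphisms $\phi_\gamma$, the identification of the target lattice $\Lambda=b(\Gamma_0)$ via proper discontinuity, the use of $\pi_0(\mathrm{Diff}^+(T^2))\cong \mathrm{SL}(2,\mathbb{Z})$ to produce the isotopy, and the uniqueness argument by comparing the $H_1$-action of the lift $z\mapsto az+b$ to the isotopy hypotheses are all correct, and the formula
\[
\mu=\frac{E-G+2\sqrt{-1}F}{E+G+2\sqrt{EG-F^2}}
\]
does reproduce the Beltrami coefficient of $g=E\,dx^2+2F\,dx\,dy+G\,dy^2$.

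Two small points of bookkeeping, neither fatal. First, if $g\in C^1$ then $E,F,G\in C^1$ and $EG-F^2$ is bounded away from zero, so $\mu$ is in fact Lipschitz (even $C^1$), not merely $C^{0,1/2}$; Schauder theory then gives $f\in C^{1,\alpha}_{\mathrm{loc}}$ for every $\alpha<1$, which is stronger than the stated $C^{1,1/2}$. Your $\mu\in C^{0,1/2}$ is of course true, but it conceals that the $1/2$ in the proposition's conclusion is not forced by the $C^1$ hypothesis; it is what one would get from $g\in C^{0,1/2}$. This is a harmless looseness since you only need to match the stated conclusion. Second, you are right that the uniqueness of $h$ (as opposed to $\tau$) requires a normalization such as $h(0)=0$, since post-composing $h$ with any translation of $T_\tau$ preserves conformality, orientation, and the isotopy class. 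The proposition as quoted in this paper does not state such a normalization, so you have implicitly strengthened the hypotheses to make ``unique $h$'' literally true; it is worth flagging that without such a normalization the uniqueness should be understood modulo the translation subgroup of $\mathrm{Conf}(T_\tau)$, which is what actually matters downstream.
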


To discuss bubble convergence for a sequence of maps $\{(u_i,\mathcal{T}_{\tau_i})\}_{i\in\mathbb{N}}$, we need to consider the convergence of the metrics given by $\mathcal{T}_{\tau_i}$. In fact, metrics of $T_\tau$ and $\mathcal{T}_{\tau'}$ separately are conformally equivalent if they lie in the same orbit of PSL($2,\mathbb{Z}$). Denote
\[\mathcal{M}=\{z\in\mathbb{C},|z|\geq 1, \text{Im}z>0,-\frac{1}{2}<\text{Re}z<\frac{1}{2},\text{ and if }|z|=1,\text{Re}z\geq 0\}\]
to be fundamental domain for $\text{PSL}(2,\mathbb{Z})$ (which is also equivalent to the quotient space $\mathbb{H}/\text{PSL}(2,\mathbb{Z})$). Since every metric of some $T_\tau$ is conformally equivalent to an element in $\mathcal{M}$ after a PSL$(2,\mathbb{Z})$ action. We say a sequence $\{\tau_i\}$ converges to $\tau_\infty\in\mathcal{M}$ if after being conformally translated to $\{\tau'_i\}\subset\mathcal{M}$ by actions in PSL$(2,\mathbb{Z})$, $\tau_i'\to\tau_\infty$. 
\begin{theorem}\cite[Theorem 5.1]{minimaltorus}\label{torusconvergence}
Let $\{(u_i,\mathcal{T}_{\tau_i})\}_{i\in\mathbb{N}}$, $u_i:\mathcal{T}_{\tau_i}\to M$, be a sequence of maps such that
\begin{enumerate}
    \item $u_i\in W^{1,2}(\mathcal{T}_{\tau_i},M)$, $\forall i$,
    \item $E(u_i,\mathcal{T}_{\tau_i})\leq \bar{E}$, $\forall i$, for some $\bar{E}>0$,
    \item $E(u_i,\mathcal{T}_{\tau_i}))-\text{Area}(u_i)\to 0$, as $i\to\infty,$
    \item For any finite collection of disjoint balls $\bigcup_i B_i$ on $\mathcal{T}_{\tau_i}$ such that $E(u_i,\bigcup_iB_i)\leq\epsilon_0$ (for the choice of $\epsilon_0$, see \cite{minimaltorus}), let $v$ be the harmonic replacement of $u_i$ on $\frac{1}{8}\bigcup_iB_i$. We have 
    \[\int_{\frac{1}{8}\bigcup_iB_i}|\nabla u_i-\nabla v|^2\to 0,\quad i\to\infty.\]
\end{enumerate}
Then either of the following holds
\begin{enumerate}
    \item If $\tau_i\to\tau_\infty$ in the above sense, then there exists a conformal harmonic torus $v_0:\mathcal{T}_{\tau_\infty}\to M$ and finitely many harmonic spheres $v_j:S^2\to M$ such that $u_i$ bubble converge to up to subsequence, with
    \begin{equation}\label{energyidentity}
        \lim_{i\to\infty}E(u_i,\mathcal{T}_{\tau_i})=E(v_0,\mathcal{T}_{\tau_\infty})+\sum_jE(v_j).    
    \end{equation}
    \item If $\tau_i$ diverge, then there exists finitely many harmonic spheres $\{v_j\}$, such that $u_i$ bubble converge to up to subsequence, with body map degenerated, and
    \begin{equation}
        \lim_{i\to\infty}E(u_i,\mathcal{T}_{\tau_i})=\sum_jE(v_j).
    \end{equation}
\end{enumerate}
\end{theorem}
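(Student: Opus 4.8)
The plan is to adapt the bubbling analysis of Colding and Minicozzi \cite{CD} for almost harmonic maps to a domain whose conformal structure varies and may degenerate, following \cite{minimaltorus}. First I would normalize the conformal structures: using the $\text{PSL}(2,\mathbb{Z})$ action, arrange that each $\tau_i$ lies in the fundamental domain $\mathcal{M}$. Then, after passing to a subsequence, either $\tau_i\to\tau_\infty\in\mathcal{M}$, in which case the flat metrics on $\mathcal{T}_{\tau_i}$ converge smoothly to the flat metric on $\mathcal{T}_{\tau_\infty}$, or $\text{Im}\,\tau_i\to\infty$, in which case $\mathcal{T}_{\tau_i}$ is conformally a flat collar $S^1\times[0,L_i]$ with $L_i\to\infty$ whose two boundary circles are glued together. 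These two cases are exactly the two alternatives in the statement.

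Second, I would run the standard $\epsilon$-regularity and concentration dichotomy. Fixing the threshold $\epsilon_0$ of \cite{minimaltorus} and combining hypotheses (2) and (4) with the harmonic-replacement version of the Sacks--Uhlenbeck $\epsilon$-regularity estimate, on any conformal disc on which $E(u_i,\cdot)<\epsilon_0$ the maps $u_i$ are equicontinuous and bounded in $W^{1,2}$, hence converge strongly in $W^{1,2}_{loc}$ along a subsequence. Let $\mathcal{S}$ be the set of points carrying at least $\epsilon_0$ of energy in the limit; it is finite because $E(u_i,\mathcal{T}_{\tau_i})\leq\bar E$. Off $\mathcal{S}$ --- and, in the divergent case, off the long part of the collar --- the $u_i$ converge strongly in $W^{1,2}$ to a harmonic map, which hypothesis (3) forces to be weakly conformal, hence a conformal harmonic (branched minimal) map. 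In the convergent case this limit, completed across $\mathcal{S}$ by the removable singularity theorem for finite-energy harmonic maps, is the conformal harmonic torus $v_0:\mathcal{T}_{\tau_\infty}\to M$. In the divergent case, a Courant--Lebesgue argument on long sub-collars together with (4) shows that the energy of $u_i$ on the middle of the collar tends to $0$, so there is no limiting torus and the body map degenerates.

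Third, I would extract the bubbles. At each point of $\mathcal{S}$ one performs the usual blow-up: rescale conformal coordinates so as to see the concentrated energy, extract a nontrivial finite-energy harmonic map of $\mathbb{C}$, and apply the removable singularity theorem to obtain a harmonic sphere $v_j:S^2\to M$. Iterating at finer scales produces finitely many bubbles, since each one carries at least $\epsilon_0$ of energy and the total energy is at most $\bar E$.

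The hard part will be the energy identity: proving that no energy is lost in the limit in the annular necks that connect the body map to the bubbles and the bubbles to one another at different scales (and, in the divergent case, along the entire collar). For honestly harmonic maps this is the classical neck analysis; here the $u_i$ are only almost harmonic, so the argument must instead exploit hypothesis (4). If a definite amount of energy survived on some neck, then by the conformal invariance of the energy one could subdivide the neck into dyadically scaled sub-annuli, on at least one of which $u_i$ would differ appreciably from its harmonic replacement --- a three-annulus, logarithmic-cutoff comparison --- contradicting (4). Making this estimate uniform over all the necks, and separately controlling the collar in the divergent case, yields the energy identity \eqref{energyidentity} and its degenerate analogue, and completes the argument.
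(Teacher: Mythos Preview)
Your proposal is correct and follows essentially the same route as the paper's sketch: normalize the marks into $\mathcal{M}$, split into the convergent and divergent cases, locate a finite energy-concentration set via the uniform bound, use hypothesis~(4) together with the Sacks--Uhlenbeck interior estimates on harmonic replacements to get strong $W^{1,2}$ convergence away from that set, rescale at concentration points to extract harmonic spheres, and invoke the neck analysis (the paper simply cites \cite[Proposition~5.1]{minimaltorus}) for the energy identity.

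One point of imprecision is your handling of the divergent case. You write that a Courant--Lebesgue argument shows the energy on the middle of the collar tends to~$0$, so ``the body map degenerates.'' In the paper's organization the body map does not disappear: after rotating coordinates one views $\mathcal{T}_{\tau_i}$ as $S^1\times[-L_i,L_i]\subset S^1\times\mathbb{R}$, and the $u_i$ converge (away from the finite concentration set) to a harmonic map $v_1$ on $S^1\times\mathbb{R}\cong S^2\setminus\{\text{two points}\}$, which extends to a harmonic sphere by removable singularities and is \emph{necessarily nontrivial}, since otherwise the no-neck-energy proposition would be violated. Thus ``degenerate body map'' means the domain degenerates to a sphere, not that the limit is trivial; your collar/Courant--Lebesgue step belongs to the neck analysis rather than to the extraction of $v_1$. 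Making explicit how $v_1$ is obtained in this case will align your write-up with the paper's.
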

\begin{proof}[Sketch of proof]
Let $\{\tau'_i\}\subset\mathcal{M}$ be the corresponding conformal structure of $\mathcal{T}_{\tau_i}$ after PSL$(2,\mathbb{Z})$ actions.
\begin{description}
\item[Case 1: $\tau'_i\to\tau_\infty$]
Since $\tau'_i$ converge, we can identify a point $x\in \mathcal{T}_{\tau_\infty}$ as on $\mathcal{T}_{\tau'_i}$ by viewing it as on the fundamental regions of lattices $\{1,\tau_\infty\}$ and $\{1,\tau'_i\}$ of corresponding conformal structures. For any $x\in \mathcal{T}_{\tau_\infty}$, we consider a sequence of energy concentration radii $r_i(x)$ defined as following
\[r_i(x)=\sup\Big\{r>0, E(u_i,B_r(x))\leq\epsilon_1\Big\},\]
where $\epsilon_1<\epsilon_0$ (see \cite{minimaltorus} for the choice of $\epsilon_0$). Such $r_i(x)$ exist and are positive. We say $x$ is an \textbf{energy concentration point} if $\lim_{i\to\infty}r_i(x)=0$. If $x$ is an energy concentration point. We have that
\[\inf_{r>0}\Big\{\lim_{n\to\infty}E(u_i,B_r(x))\Big\}\geq\epsilon_1.\]
Since the sequence $\{(u_i,\tau_i)\}$ has uniform bounded energy. We know that the number of the energy concentration points are bounded by $W/\epsilon_1$. Denote the energy concentration points by $\{x_1,...,x_m\}.$ If $x\in \mathcal{T}_{\tau_\infty}\setminus\{x_1,...,x_m\}$. We can find $r(x)>0$ such that $E(u_i,B_{r(x)}(x))\leq\epsilon_1$, $\forall i$. By assumption (4), there exists $v_i$, which are the energy minimizing harmonic maps defined on $\frac{1}{8}B_{r(x)}(x)$ with the same boundary value as $u_i$, we know from \cite{SU} that $v_i$ have uniform interior $C^{2,\alpha}$-estimates on $\frac{1}{8}B_{r(x)}(x)$ hence converges to a harmonic map $v_0$ on $\frac{1}{9}B_{r(x)}(x)$ up to subsequence. For any compact subset $K\subset \mathcal{T}_{\tau_\infty}\setminus\{x_1,...,x_m\}$, we can cover them by finitely many balls $\frac{1}{9}B_{r(x)}(x)$ and hence $u_i\to v_0$ in $W^{1,2}(K,M)$ up to subsequence. After exhausting $\mathcal{T}_{\tau_\infty}\setminus\{x_1,...,x_m\}$ by a sequence of compact sets and a diagonal argument, we know $v_0$ is a harmonic map on $\mathcal{T}_{\tau_\infty}\setminus\{x_1,...,x_m\}$ and by removable singularity property of harmonic map \cite{SU} we know $v_0$ extends to a harmonic map on $\mathcal{T}_{\tau_\infty}$.

We now see what happens near the energy concentration points. Fix an energy concentration point $x\in\{x_1,...,x_m\}$ and denote $r_i=r_i(x)$. Find $r>0$ such that $E(v_0,B_r(x))\leq\frac{1}{3}\epsilon_1$. We recale $u_i$ on $r_i$. Define 
\[u_i'(y):=u_i(x+r_i(y-x)).\]
So $B_{r_i}(x)$ are rescaled to $B_1$. $u_i'$ can be viewed as defined on $B_{r/r_i}$ with $r/r_i\to\infty$. Since $B_{r/r_i}\to\mathbb{C}$, and $\mathbb{C}$ is conformally equivalent to $S^2$ without the south pole, we can consider $u_i'$ as defined on any compact subset of $S^2$ away from south pole for $i$ sufficiently large enough. Since the assumption (4) is conformally invariant, we can repeat the same argument, find finitely many energy concentration points $\{x_1',...x_n'\}\subset S^2\setminus\{\text{south pole}\}$, such that $u_i'$ converges to a harmonic map $v_1$ defined on $S^2$ in the above sense. 

We can repeat the bubbling convergence given above. There are only finitely many such steps, and then the bubbling convergence stops. For the energy identity \eqref{energyidentity}, we need to show that the following is true:
\begin{equation}\label{energyidentity2}
\lim_{r\to 0,R\to\infty}\lim_{i\to\infty}E(u_i,B_r(x)\setminus B_{r_iR}(x))=0.    
\end{equation}
\eqref{energyidentity2} follows from \cite[Proposition 5.1]{minimaltorus}.
\item[Case 2: $\tau'_i$ diverge]
We use $(t,\theta)$ as parameters on $\mathcal{T}_{\tau_i}$. Let $\theta_n=\text{arg}(\tau_n)$, and let $z'=t+\sqrt{-1}\theta=e^{-\sqrt{-1}(\frac{\pi}{2}-\theta_n)}z$ be another conformal parameter system on $\mathcal{T}_{\tau_i}$. We can argue as in case 1 to show that $u_i$ converges to a harmonic map $v_1$ defined on $S\times\mathbb{R}$. We know that $v_1$ must be nontrivial or else it contradicts with \cite[Proposition 5.1]{minimaltorus}. As $S\times\mathbb{R}$ is conformally equivalent to $S^2\setminus$\{north pole, south pole\}, we can extend $v_1$ to a harmonic map on $S^2$. For the energy concentration points, we can rescale $u_i$ and the rescaled map will converge as we discussed in case 1 to finitely many bubble maps $\{v_j\}$.
\end{description}
\end{proof}
Inspired by Theorem \ref{torusconvergence}, we come up with the following definition of \textit{bubble norm} which is used to describe how \text{close} a given map is to a collection of a conformal harmonic torus and finitely many harmonic spheres in the bubble tree sense.
\begin{definition}[Bubble Norm]\label{bubblenorm}
From Theorem \ref{torusconvergence}, we know that if a sequence of maps $\{(u_i,\tau_i)\}$ with varying conformal structures of the domains $\mathcal{T}_{\tau_i}$ converges in the bubble tree sense, the domain of the body map could be either torus or sphere depends on whether the conformal structures converge. We discuss the torus case in case 1 and the sphere case in case 2.
\begin{description}
\item[Case 1] Given a map $u:\mathcal{T}_{\tau}\to M$, with $\tau\in\mathcal{M}$, and a collection of maps $\{(v_0,\mathcal{T}_{\tau_0}),\{v_j\}_{j=1}^n\big\}$, where $v_0$ is a harmonic torus, $v_0:\mathcal{T}_{\tau_0}\to M$, $\tau_0\in\mathcal{M}$, and $\{v_j\}_{j=1}^n$ are harmonic spheres, $v_j:S^2\to M$, we say that 
\[d_B((u,T_\tau),\big\{(v_0,\mathcal{T}_{\tau_0}),\{v_j\}\big\})<\epsilon\]
if there exist $B_{r_j}(x_j)\subset \mathcal{T}_{\tau}$, $r_j<1$, $1\leq j\leq n$, $D_0\in S^1\times S^1$, $D_j\in\text{PSL}(2,\mathbb{C})$,
such that the following holds:
\begin{enumerate}
    \item for $i\neq j$, if $B_{r_i}(x_i)\cap B_{r_j}(x_j)\neq\emptyset$, we have either $B_{r_i}(x_i)\subset B_{r_j^2}(x_j)$ or $B_{r_j}(x_j)\subset B_{r_i^2}(x_i)$. Let
    \[\Omega_j=B_{r_j^2}(x_j)\setminus\Big\{\bigcup B_{r_i}(x_i),\: B_{r_i}(x_i)\subset B_{r_j^2}(x_j)\Big\}.\]

    $\{\Omega_j\}_{j=1}^n$ is a collection of pairwise disjoint domains in $T_\tau$.
    \item Since $\mathbb{C}$ is confromally equivalent to $S^2\setminus\{\text{north pole}\}$, we can regard $v_j$ as harmonic maps defined on $\mathbb{C}$ via stereographic projection.
    \begin{equation}\label{bubble}
  |\tau-\tau_0|<\epsilon,        
    \end{equation}
    \begin{equation}\label{bubble1}
        \begin{split}
            \Big(\int_{T_\tau\setminus\bigcup_{j}B_{r_j}(x_j)}|\nabla u-&\nabla (v_0\circ D_0\circ\iota_{\tau,\tau_0})|^2\Big)^{1/2}\\
            &+\Big(\int_{\bigcup_jB_{r_j}(x_j)}|\nabla (v_0\circ D_0\circ\iota_{\tau,\tau_0})|^2\Big)^{1/2}<\epsilon/3,
        \end{split}
    \end{equation}
    \begin{equation}\label{bubble2}
        \sum_{j=1}^n\Big(\int_{\Omega_j}|\nabla u-\nabla(v_j\circ D_j)|^2\Big)^{1/2}+\Big(\int_{\mathbb{C}\setminus\Omega_j}|\nabla(v_j\circ D_j)|^2\Big)^{1/2}<\epsilon/3,
    \end{equation}
    \begin{equation}\label{bubble3}
        \Big(\int_{\bigcup_j\{B_{r_j}(x_j)\setminus B_{r_j^2}(x_j)\}}|\nabla u|^2\Big)^{1/2}<\epsilon/3.
    \end{equation}
\end{enumerate}
\eqref{bubble} shows that the confromal structure of $T_\tau$ is "close" to $\mathcal{T}_{\tau_0}$, which is the domain of $v_0$. \eqref{bubble1} and \eqref{bubble2} are for describing the convergence of the body map and bubble map in $W^{1,2}$ sense on the restricted domain $T_\tau\setminus\bigcup_{j}B_{r_j}(x_j)$ and $\Omega_j$, while \eqref{bubble3} represents the energy loss on the neck domain $B_{r_j}(x_j)\setminus B_{r_j^2}(x_j)$. From Theorem \ref{torusconvergence} we know that if a sequence of maps $\{(u_i,\mathcal{T}_{\tau_i})\}$ converges to $\big\{(v_0,\mathcal{T}_{\tau_0}),\{v_j\}_{j=1}^n\big\}$ in a bubble tree sense, then \[d_B((u_i,\mathcal{T}_{\tau_i}),\big\{(v_0,\mathcal{T}_{\tau_0}),\{v_j\}_{j=1}^n\big\})\to 0,\quad i\to\infty.\]
\item[Case 2]
Given a map $u:T_\tau\to M$, with $\tau\in\mathcal{M}$, and a collection of harmonic spheres $\{v_j\}_{j=1}^n$, $v_j:S^2\to M$, we say that 
\[d_B((u,T_\tau),\{v_j\}_{j=1}^n)<\epsilon\]
if there exist $B_{r_j}(x_j)\subset \mathcal{T}_{\tau}$, $r_j<1$, $D_j\in\text{PSL}(2,\mathbb{C})$, for $1\leq j\leq n$,
such that the following holds:
\begin{enumerate}
    \item For $i\neq j$, if $B_{r_i}(x_i)\cap B_{r_j}(x_j)\neq\emptyset$, we have either $B_{r_i}(x_i)\subset B_{r_j^2}(x_j)$ or $B_{r_j}(x_j)\subset B_{r_i^2}(x_i)$. Let
    \[\Omega_j=B_{r_j^2}(x_j)\setminus\Big\{\bigcup B_{r_i}(x_i),\: B_{r_i}(x_i)\subset B_{r_j^2}(x_j)\Big\}.\]
    $\{\Omega_j\}_{j=1}^n$ is a collection of pairwise disjoint domains in $T_\tau$.
    \item 
     We argue as above and regard $v_j$ as harmonic maps defined on $\mathbb{C}$. Since we are considering the case that body map is degenerated, we use exponential map to conformally transform $\mathbb{C}\setminus\{0\}\simeq S^1\times\mathbb{R}$, and regard the body map $v_1$ as harmonic map defined on $S^1\times\mathbb{R}$. We don't strictly distinguish $v_1$ and $v_1\circ e^z$ here in abuse of notation. Also by changing parameters (see the proof of Theorem \ref{torusconvergence}) we can consider $T_\tau\simeq S^1\times[-L,L]\subset S^1\times\mathbb{R}.$
    \begin{equation}\label{bubble4}
        \begin{split}
            \Big(\int_{T_\tau\setminus\bigcup_{j}B_{r_j}(x_j)}|\nabla u-&\nabla (v_1\circ D_1)|^2\Big)^{1/2}\\
            &+\Big(\int_{S^1\times\mathbb{R}\setminus (T_\tau\setminus\bigcup_jB_{r_j}(x_j)))}|\nabla (v_1\circ D_1)|^2\Big)^{1/2}<\epsilon/3,
        \end{split}
    \end{equation}
    \begin{equation}\label{bubble5}
        \sum_{j=1}^n\Big(\int_{\Omega_j}|\nabla u-\nabla(v_j\circ D_j)|^2\Big)^{1/2}+\Big(\int_{\mathbb{C}\setminus\Omega_j}|\nabla(v_j\circ D_j)|^2\Big)^{1/2}<\epsilon/3,
    \end{equation}
    \begin{equation}\label{bubble6}
        \Big(\int_{\bigcup_j\{B_{r_j}(x_j)\setminus B_{r_j^2}(x_j)\}}|\nabla u|^2\Big)^{1/2}<\epsilon/3.
    \end{equation}
\end{enumerate}
Similar to Case 1, we see that \eqref{bubble4} and \eqref{bubble5} are for describing the convergence of the degenerated body map and bubble map in $W^{1,2}$ sense on the restricted domain $T_\tau\setminus\bigcup_{j}B_{r_j}(x_j)$ and $\Omega_j$, while \eqref{bubble6} represents the energy loss on the neck domain $B_{r_j}(x_j)\setminus B_{r_j^2}(x_j)$. From Theorem \ref{torusconvergence} we know that if a sequence of maps $\{(u_i,\mathcal{T}_{\tau_i})\}$ converges to $\{v_j\}_{j=1}^n$ in a bubble tree sense with degenerated body map, then
\[d_B((u_i,\mathcal{T}_{\tau_i}),\{v_j\}_{j=1}^n)\to 0,\quad i\to\infty.\]
\end{description}
\end{definition}
\begin{definition}[Equivalent Class]\label{equivalent class}
Given harmonic maps $u:\Sigma\to M$ and $v:\Sigma\to M$. We have $v\sim u$ if there exists conformal automorphism $g$ of $\Sigma$ such that $v\circ g=u$. We denote $[u]$ by the equvalent class of $u$.
\end{definition}
Although the bubble norm $d_B(\cdot,\cdot)$ is not a norm, but if a map $u:T_\tau\to M$ is close to a collection of harmonic maps in the bubble tree sense, then for any map $u':\mathcal{T}_{\tau'}\to M$ that's sufficiently close to $u$ in $W^{1,2}$ sense, $u'$ is also close to the same collection of harmonic maps in the bubble tree sense. 
\begin{claim}\label{open unstable}
Given $u:T_\tau\to M$, and a collection of harmonic maps $\{(v_0,\mathcal{T}_{\tau_0}),\{v_j\}_{j=1}^n\big\}$. If $d_B((u_i,\{(v_0,\mathcal{T}_{\tau_0}),\{v_j\}_{j=1}^n\big\})<\epsilon$ Then there exists $\delta(u)>0$, which depends on $(u,T_\tau)$ and $\{(v_0,\mathcal{T}_{\tau_0}),\{v_j\}_{j=1}^n\big\}$, such that for any map $u':\mathcal{T}_{\tau'}\to M$, if $|\tau-\tau'|<\delta(u)$ and
\[\int_{\mathcal{T}_{\tau'}}|\nabla u'-\nabla( u\circ\iota_{\tau,\tau'})|^2<\delta(u),\]
then 
\[d_B((u',\mathcal{T}_{\tau'}),\{(v_0,\mathcal{T}_{\tau_0}),\{v_j\}_{j=1}^n\big\})<\epsilon.\]
\end{claim}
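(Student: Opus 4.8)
The plan is to show that the bubble norm $d_B$ is \emph{upper semicontinuous} under $W^{1,2}$-perturbation of the map together with a small perturbation of the conformal structure, by transporting the whole data defining $d_B((u,T_\tau),\{(v_0,\mathcal{T}_{\tau_0}),\{v_j\}\})<\epsilon$ over to the nearby domain $\mathcal{T}_{\tau'}$. Concretely, suppose we are given balls $B_{r_j}(x_j)\subset\mathcal{T}_\tau$, a conformal automorphism $D_0\in S^1\times S^1$, and Möbius transformations $D_j\in\mathrm{PSL}(2,\mathbb{C})$ realizing the four inequalities \eqref{bubble}--\eqref{bubble3} with some strict slack, say each is bounded by $\epsilon/3-3\eta$ for a fixed small $\eta>0$ (this slack exists because all four inequalities are strict). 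The candidate data on $\mathcal{T}_{\tau'}$ will be the \emph{same} combinatorial configuration: the balls $\iota_{\tau,\tau'}(B_{r_j}(x_j))$, the same $D_j$, and $D_0$ unchanged, with $v_0$ still mapped in via $v_0\circ D_0\circ\iota_{\tau',\tau_0}$. The nesting conditions in item (1) are preserved because $\iota_{\tau,\tau'}$ is a diffeomorphism that is $C^0$-close to the identity once $|\tau-\tau'|$ is small, so after a harmless adjustment of the radii (shrinking $r_j$ slightly, using the dyadic gap between $r_j$ and $r_j^2$) the inclusions $B_{r_i}\subset B_{r_j^2}$ persist.

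The core estimates are then of two types. First, \eqref{bubble} for $\tau'$: since $|\tau-\tau_0|<\epsilon/3-3\eta$ by hypothesis, requiring $|\tau-\tau'|<\eta$ gives $|\tau'-\tau_0|<\epsilon$ by the triangle inequality. Second, the three Dirichlet-type inequalities. Here I would use that precomposition with $\iota_{\tau,\tau'}$ is conformal up to a bounded, nearly-isometric linear distortion: the pullback of the flat metric on $\mathcal{T}_{\tau'}$ under $\iota_{\tau,\tau'}$ differs from the flat metric on $\mathcal{T}_\tau$ by a factor $1+O(|\tau-\tau'|)$, and since the Dirichlet energy in dimension two is conformally invariant, $\int_{\mathcal{T}_{\tau'}}|\nabla w|^2$ for $w$ a map on $\mathcal{T}_{\tau'}$ equals $\int_{\mathcal{T}_\tau}|\nabla(w\circ\iota_{\tau,\tau'})|^2$ exactly; so the only genuine error comes from comparing $v_0\circ D_0\circ\iota_{\tau',\tau_0}$ with $v_0\circ D_0\circ\iota_{\tau,\tau_0}$, and this difference is controlled in $W^{1,2}$ by $|\tau-\tau'|$ times a constant depending on $\|\nabla(v_0\circ D_0)\|_{C^0}$ on the (compact, away-from-poles) relevant region — which is exactly where the dependence of $\delta$ on $(u,T_\tau)$ and the collection $\{(v_0,\mathcal{T}_{\tau_0}),\{v_j\}\}$ enters. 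Combining: $\|\nabla u'-\nabla(v_0\circ D_0\circ\iota_{\tau',\tau_0})\|_{L^2}\le \|\nabla u'-\nabla(u\circ\iota_{\tau',\tau})\|_{L^2}+\|\nabla(u\circ\iota_{\tau',\tau})-\nabla(v_0\circ D_0\circ\iota_{\tau,\tau_0}\circ\iota_{\tau',\tau})\|_{L^2}+\|\nabla(v_0\circ D_0\circ(\iota_{\tau,\tau_0}\circ\iota_{\tau',\tau}-\iota_{\tau',\tau_0}))\|_{L^2}$; the first term is $<\delta(u)^{1/2}$ by hypothesis, the second equals the $\mathcal{T}_\tau$-quantity appearing in \eqref{bubble1} by conformal invariance hence is $<\epsilon/3-3\eta$, and the third is $<\eta$ for $\delta(u)$ small. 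The same three-term split handles \eqref{bubble2} (the $D_j$ are fixed, so only the first, genuinely-perturbative term appears, plus the $L^2$-continuity of restricting to slightly moved domains $\Omega_j$) and \eqref{bubble3} (only the perturbative term and the domain-movement term). Choosing $\delta(u)$ smaller than $\eta^2$ and than the threshold forcing all the $\iota$-distortions below $\eta$ closes every inequality at level $\epsilon/3$.

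The main obstacle I anticipate is the bookkeeping around the \emph{moving domains} $B_{r_j}(\iota_{\tau,\tau'}(x_j))$ versus $\iota_{\tau,\tau'}(B_{r_j}(x_j))$ and the induced sets $\Omega_j$: strictly speaking $\iota_{\tau,\tau'}$ does not send round balls to round balls, so one must either absorb the slightly-non-round image into a marginally larger/smaller round ball (possible because of the $r_j$ versus $r_j^2$ dyadic room and because $E(u,\cdot)$ is absolutely continuous with respect to area, so $\int_{A}|\nabla u'|^2$ over a thin annular symmetric difference is small) or re-prove the nesting with the non-round sets. I would handle this by first fixing, for each $j$, a radius $r_j'$ with $r_j^2<(r_j')^2<\cdots<r_j'<r_j$ such that $\iota_{\tau,\tau'}(B_{r_j}(x_j))$ is sandwiched between $B_{(r_j')^2}(x_j')$ and $B_{r_j'}(x_j')$ where $x_j'=\iota_{\tau,\tau'}(x_j)$, valid once $|\tau-\tau'|$ is small depending only on the finitely many $r_j$; then all four inequalities, which are monotone or continuous under such small enlargements of the excised balls, transfer with the error controlled by $\int$ over the (uniformly small in energy, by assumption (4)-type absolute continuity and by \eqref{bubble3} already giving smallness near the necks) transition annuli. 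Everything else is a finite sum of elementary $W^{1,2}$-triangle-inequality and conformal-invariance arguments, and the finiteness of the bubble tree keeps all constants uniform.
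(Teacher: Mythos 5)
Your proposal follows essentially the same approach as the paper: keep the same balls $B_{r_j}(x_j)$, excised domains $\Omega_j$, and conformal automorphisms $D_0,D_j$; transfer them to $\mathcal{T}_{\tau'}$; and close each of the defining inequalities of $d_B$ by a triangle inequality, absorbing the $\delta(u)^{1/2}$ contribution into the slack already present in the strict inequalities $d_B(\cdot,\cdot)<\epsilon$. The paper's proof is terser and does not spell out the moving--domain bookkeeping (why the nesting $B_{r_i}(x_i)\subset B_{r_j^2}(x_j)$ and the definition of $\Omega_j$ survive passage from $\mathcal{T}_\tau$ to $\mathcal{T}_{\tau'}$), which you address explicitly by sandwiching $\iota_{\tau,\tau'}(B_{r_j}(x_j))$ between round balls of nearby radii using the dyadic room between $r_j$ and $r_j^2$; this is a useful clarification of a step the paper merely asserts with ``we can assume $\tau'$ is sufficiently close to $\tau$ such that $B_{r_j}(x_j)\subset\mathcal{T}_{\tau'}$.''

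One small simplification you are missing: because each $\iota_{\tau_1,\tau_2}$ is, by definition, the $\mathbb{R}$-linear map of $\mathbb{C}$ fixing $1$ and sending $\tau_1\mapsto\tau_2$, one has the exact identity $\iota_{\tau,\tau_0}\circ\iota_{\tau',\tau}=\iota_{\tau',\tau_0}$. Consequently the third term in your decomposition,
\[
\Big\|\nabla\big(v_0\circ D_0\circ\iota_{\tau,\tau_0}\circ\iota_{\tau',\tau}\big)-\nabla\big(v_0\circ D_0\circ\iota_{\tau',\tau_0}\big)\Big\|_{L^2},
\]
vanishes identically, and the estimate closes with the two-term triangle inequality exactly as in the paper; there is no need to invoke $C^0$-control of $\nabla(v_0\circ D_0)$ for this particular term. (Also note a harmless typo in the paper's statement: for $u\circ\iota$ to be defined on $\mathcal{T}_{\tau'}$ with $u:\mathcal{T}_\tau\to M$, the composition should read $u\circ\iota_{\tau',\tau}$; you used the correct orientation.)
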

\begin{proof}
$d_B((u_i,\{(v_0,\mathcal{T}_{\tau_0}),\{v_j\}_{j=1}^n\big\})<\epsilon$ implies that there exist $B_{r_j}(x_j)\subset T_\tau$, pairwise disjoint domains $\{\Omega_j\}$, and conformal automorphisms $\{D_j\}$ such that the inequalities \eqref{bubble1}, \eqref{bubble2}, and \eqref{bubble3} hold. We can assume $\tau'$ is sufficiently close to $\tau$ such that $B_{r_j}(x_j)\subset \mathcal{T}_{\tau'}$ and $E(u\circ\iota_{\tau',\tau},\mathcal{T}_{\tau'})-E(u,T_\tau)$ is sufficiently small. Then we have 
\begin{equation}
\begin{split}
&\Big(\int_{\mathcal{T}_{\tau'}\setminus\bigcup_{j}B_{r_j}(x_j)}|\nabla u'-\nabla (v_0\circ D_0\circ\iota_{\tau',\tau_0})|^2\Big)^{1/2}\leq\\
            &\Big(\int_{\mathcal{T}_{\tau'}}|\nabla u'-\nabla( u\circ\iota_{\tau,\tau'})|^2\Big)^{1/2}
            +\Big(\int_{T_\tau\setminus\bigcup_{j}B_{r_j}(x_j)}|\nabla u-\nabla (v_0\circ D_0\circ\iota_{\tau,\tau_0})|^2\Big)^{1/2}.
\end{split}
\end{equation}
Thus for $\delta(u)>0$ sufficiently small we have
    \begin{equation}
        \begin{split}
            \Big(\int_{\mathcal{T}_{\tau'}\setminus\bigcup_{j}B_{r_j}(x_j)}|\nabla u'-&\nabla (v_0\circ D_0\circ\iota_{\tau',\tau_0})|^2\Big)^{1/2}\\
            &+\Big(\int_{\bigcup_jB_{r_j}(x_j)}|\nabla (v_0\circ D_0\circ\iota_{\tau',\tau_0})|^2\Big)^{1/2}<\epsilon/3.
        \end{split}
    \end{equation}
Similarly, \eqref{bubble2} and \eqref{bubble3} hold for the same domain and conformal automorphisms with $u$ replaced by $u'$. Thus we have $d_B((u',\mathcal{T}_{\tau'}),\{(v_0,\mathcal{T}_{\tau_0}),\{v_j\}_{j=1}^n\big\})<\epsilon.$
\end{proof}
\section{Unstable Lemma}
The main focus of the section is Lemma $\ref{unstable}$ and $\ref{unstable1}$: proving the energy is concave for any map with some flat torus as its domain, that are sufficiently close to either a finite collection of one harmonic torus and possibly several harmonic spheres or a finite collection of harmonic spheres in bubble tree sense. This corresponds to the two possible convergence result given by Theorem \ref{torusconvergence}. Now we focus on the case where the conformal structure converges and the limit maps are one harmonic torus and possibly several harmonic spheres.
Consider a closed manifold $M$ of dimension at least three which is isometrically embedded in $\mathbb{R}^N$, given a collection of harmonic maps $\{(v_0,\mathcal{T}_{\tau_0}),\{v_j\}_{j=1}^n\}$, where $v_0$ is a harmonic torus, $v_0:\mathcal{T}_{\tau_0}\to M$, $\tau_0\in\mathcal{M}$, and $\{v_j\}_{j=1}^n$ are harmonic spheres. Assume that $\text{Index}(v_j)=k_j$ for each $j\in\{0,1,...,n\}$, by Definition \ref{indexdef}, there exists subspace  $\{X_{j,i}\}_{i=1}^{k_j}\subset\Gamma(v_j^{-1}TM)$ on which the index form is negative definite, by rescaling $\{X_{j,i}\}_{i=1}^{k_j}$ (which we still denote by $\{X_{j,i}\}_{i=1}^{k_j}$), we have that for 
\[E_{v_0}(s_0,\mathcal{T}_{\tau_0})=E(\Pi\circ\big(v_0+\sum_{i=1}^{k_0}s_{0,i}X_{0,i}\big),\mathcal{T}_{\tau_0}),\quad s_0=(s_{0,1},...,s_{0,k_0})\in\bar{B}^{k_0},\]
and 
\[E_{v_j}(s_j,\mathbb{C})=E(\Pi\circ\big(v_j+\sum_{i=1}^{k_j}s_{j,i}X_{j,i}\big),\mathbb{C}),\quad s_j=(s_{j,1},...,s_{j,k_j})\in\bar{B}^{k_j},\:j=1,...,n,\]
there exist $c_0<1$ such that 
\begin{equation}\label{unstableassumption}
-\frac{1}{2c_0}\text{Id}< D^2_{s_0}E_{v_0}(s_0,\mathcal{T}_{\tau_0})< -2c_0\text{Id},\quad\forall s_0\in\bar{B}^{k_0},
\end{equation}
and
\begin{equation}\label{unstableassumption1}
-\frac{1}{2c_0}\text{Id}< D^2_{s_j}E_{v_j}(s_j,\mathbb{C})< -2c_0\text{Id},\quad\forall s_j\in\bar{B}^{k_j},
\end{equation}
for each $j=1,...,n$.
\begin{claim}\label{forunstable}
Let $\{(v_0,\mathcal{T}_{\tau_0}),\{v_j\}_{j=1}^n\}$ and $\{X_{j,i}\}$ be given as above.
Given $\epsilon>0$, there exists $\delta(\epsilon)>0$, such that for any $B_{\rho}(x)\subset\mathbb{C}$, if 
\begin{equation}
E(v_j,B_{\rho}(x))<\delta(\epsilon),    
\end{equation}
then we have 
\begin{equation}
    -\epsilon\text{Id}<D^2_{s_j}E_{v_j}(s_j,B_\rho(x))<\epsilon\text{Id},\text{ and }\int_{B_\rho(x)}|\nabla X_{j,i}|^2<\epsilon,
\end{equation}
for $i=1,...,k_j$, and $j=0,...,n.$
\end{claim}
\begin{proof}
Let
\[\Tilde{\rho}:=\inf\big\{\rho>0,\:\int_{B_\rho(x)}|\nabla X_{j,i}|^2>\frac{\epsilon}{2},B_{\rho}(x)\subset\mathbb{C}\big\}.\]
We know that $\Tilde{\rho}>0$. Let
\[\delta_1(\epsilon):=\inf\big\{E(v_j,B_{\Tilde{\rho}}(x)),\:x\in\mathbb{C}\big\}.\]
It's easy to check that if $E(v_j,B_{\rho}(x))<\delta_1(\epsilon)$ for some $B_\rho(x)\subset\mathbb{C}$, then it implies that $\rho\leq\Tilde{\rho}$, thus we have $\int_{B_\rho(x)}|\nabla X_{j,i}|^2<\epsilon$. Similarly, we can choose $\delta_2(\epsilon)>0$ such that if $E(v_j,B_{\rho}(x))<\delta_2(\epsilon)$ for some $B_\rho(x)\subset\mathbb{C}$, then it implies \[-\epsilon\text{Id}<D^2_{s_j}E_{v_j}(s_j,B_\rho(x))<\epsilon\text{Id}.\]
We finish the proof by choosing $\delta(\epsilon)=\min\{\delta_1(\epsilon),\delta_2(\epsilon)\}.$
\end{proof}
\begin{lemma}[Unstable Lemma for torus]\label{unstable} Let $\{(v_0,\mathcal{T}_{\tau_0}),\{v_j\}_{j=1}^n\}$, $\{X_{j,i}\}$ be given as above. There exists $\epsilon>0$, such that if a map $u:\mathcal{T}_{\tau}\to M$, $\tau\in\mathcal{M}$, $u\in W^{1,2}(T_\tau,M)$ has $d_B((u,\mathcal{T}_{\tau}),\{(v_0,\mathcal{T}_{\tau_0}),\{v_j\}_{j=1}^n\})<\epsilon.$
Then we can construct $\{\Tilde{X}_{j,i}\}\subset C^\infty(T_\tau,\mathbb{R}^N)$, so that for the smooth function \[E_u(\cdot,T_\tau):\bar{B}^k\to[0,\infty),\]
which is defined by
\[E_u(s,T_\tau)=E\big(\Pi\circ\big(u+\sum_{i=1,j=0}^{k_j,n}s_{j,i}\Tilde{X}_{j,i}\big),T_\tau\big),\quad s=(s_{0,1},...,s_{j,k_j},s_{j+1,1},...,s_{n,k_n})\in\bar{B}^k,\]
where $k=\sum_{j=0}^nk_j$, the following holds:
\begin{equation}\label{33}
    -\frac{1}{c_0}\text{Id}\leq D^2_sE_u(s,T_\tau)\leq -c_0\text{Id},\quad\forall s\in\bar{B}^k.
\end{equation}
Moreover, $E_u(\cdot,T_\tau)$ has a unique maximum at $m(u)\in B^k_{c_0/\sqrt{10}}$ and
\begin{equation}\label{34}
E_u(m(u),T_\tau)-\frac{1}{2c_0}|s-m(u)|^2\leq E_u(s,T_\tau)\leq E_u(m(u),T_\tau)-\frac{c_0}{2}|s-m(u)|^2,    
\end{equation}
for all $s\in\bar{B}^k.$
\end{lemma}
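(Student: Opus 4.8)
The plan is to build the vector fields $\tilde X_{j,i}$ on $T_\tau$ by transplanting the $X_{j,i}$ from the model domains ($\mathcal{T}_{\tau_0}$ for $j=0$, $\mathbb{C}\simeq S^2$ for $j\geq 1$) to $T_\tau$ using the comparison maps $D_0\circ\iota_{\tau,\tau_0}$ and $D_j$ that appear in the definition of $d_B$, cut off appropriately on the neck regions $B_{r_j}(x_j)\setminus B_{r_j^2}(x_j)$. Concretely: for $j=0$, set $\tilde X_{0,i}$ to be (a cutoff of) $X_{0,i}\circ D_0\circ\iota_{\tau,\tau_0}$ on $T_\tau\setminus\bigcup_j B_{r_j}(x_j)$ extended by zero, and for $j\geq 1$ set $\tilde X_{j,i}$ to be (a cutoff of) $X_{j,i}\circ D_j$ supported in $\Omega_j$, extended by zero. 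The cutoff functions are chosen to transition on the neck annuli; because of \eqref{bubble3} the energy of $u$ on these necks is $<\epsilon$, and by Claim~\ref{forunstable} the $X_{j,i}$ themselves carry only $o(1)$ energy on the small balls we are excising, so the cutoff introduces only a small error in every quantity below.

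The core of the argument is then a second-variation estimate: I would expand $D^2_s E_u(s,T_\tau)$ as a sum of the block contributions from $T_\tau\setminus\bigcup_j B_{r_j}(x_j)$ (which should be close to $D^2_{s_0}E_{v_0}(s_0,\mathcal{T}_{\tau_0})$) and from each $\Omega_j$ (close to $D^2_{s_j}E_{v_j}(s_j,\mathbb{C})$), plus cross terms and remainder. The key points making this work: (i) the explicit formula \eqref{energyTaylor2} for $\frac{d^2}{ds^2}E(\Pi\circ(u+sX))$ shows that $D^2_s E_u$ depends only on $\nabla u$, $X$, $\nabla X$ pointwise (through $\Pi$ and its derivatives at $u$), so $W^{1,2}$-closeness of $u$ to $v_0\circ D_0\circ\iota_{\tau,\tau_0}$ (resp.\ $v_j\circ D_j$) on the relevant piece — guaranteed by \eqref{bubble1}, \eqref{bubble2} — transfers the Hessian bounds \eqref{unstableassumption}, \eqref{unstableassumption1} up to error controlled by $\epsilon$; (ii) the cross terms between different blocks vanish to leading order because the supports of $\tilde X_{j,i}$ and $\tilde X_{j',i'}$ for $j\neq j'$ are either disjoint or meet only on neck annuli of small energy, and the contributions over excised small balls are $O(\epsilon)$ by Claim~\ref{forunstable}; (iii) the necks contribute $O(\epsilon)$ because of \eqref{bubble3} together with the small $\nabla X$-energy there. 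Choosing $\epsilon$ small enough compared to $c_0$ then upgrades \eqref{unstableassumption}, \eqref{unstableassumption1} (which are strict, with constants $2c_0$ and $1/(2c_0)$) to the slightly weaker \eqref{33} with constants $c_0$ and $1/c_0$, with room to spare.

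Once \eqref{33} holds, the remaining conclusions are soft. Negative definiteness of $D^2_s E_u$ on $\bar B^k$ means $E_u(\cdot,T_\tau)$ is strictly concave, hence has at most one critical point, which is its unique maximum. To locate it in $B^k_{c_0/\sqrt{10}}$ one estimates $|D_s E_u(0,T_\tau)|$: the first variation at $s=0$ is, by \eqref{firstvariation}, $2\int \langle\nabla u, d\Pi_u(\nabla\tilde X_{j,i})\rangle + \langle\nabla u,\mathrm{Hess}\Pi_u(\tilde X_{j,i},\nabla u)\rangle$; comparing with the corresponding expression for $v_0$ (resp.\ $v_j$), which vanishes since $v_0,v_j$ are harmonic and $X_{j,i}$ is tangential, the difference is $O(\epsilon)$ by the $W^{1,2}$-closeness and the small neck/excised-ball energies. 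So $|D_s E_u(0)|\leq C\epsilon$, and since $\|D^2_s E_u\|\leq 1/c_0$ and $D^2_s E_u\leq -c_0\,\mathrm{Id}$, the maximizer $m(u)$ satisfies $|m(u)| = |(D^2)^{-1} D_s E_u(\xi)|\le C\epsilon/c_0 < c_0/\sqrt{10}$ for $\epsilon$ small. Finally \eqref{34} is just the quadratic Taylor expansion of $E_u$ about its maximum $m(u)$ with Lagrange remainder, using $DE_u(m(u))=0$ and the two-sided bound \eqref{33}.

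\textbf{Main obstacle.} The delicate step is the cross-term and neck analysis in (ii)--(iii): one must verify that transplanting and cutting off the $X_{j,i}$ — which individually may have large gradient energy concentrated near the bubble points $x_j$ — does not destroy the block-diagonal structure of the Hessian. This is where Claim~\ref{forunstable} is essential (it says the $X_{j,i}$ energy is small on small balls, so excising a small ball around an interior bubble point costs little) and where the nesting condition on the balls $B_{r_j}(x_j)$ in the definition of $d_B$, together with \eqref{bubble3}, is used to control the overlaps. Getting the bookkeeping right so that all errors are genuinely $O(\epsilon)$ uniformly — in particular handling a bubble tree with bubbles on bubbles, where $\Omega_j$ is an annulus-with-holes rather than a disk — is the technical heart of the proof.
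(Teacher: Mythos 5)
Your outline matches the paper's strategy closely: transplant the $X_{j,i}$ via the comparison maps $D_0\circ\iota_{\tau,\tau_0}$ and $D_j$, cut off on the neck annuli, compare $D^2_s E_u$ block by block to $D^2_{s_0}E_{v_0}$ and $D^2_{s_j}E_{v_j}$, use Claim~\ref{forunstable} for the excised-ball contributions, and finally a first-variation comparison to put $m(u)$ in $B^k_{c_0/\sqrt{10}}$ and a quadratic Taylor expansion for \eqref{34}. Two specific ingredients that the paper supplies for the ``technical heart'' you flag deserve mention, because they are not automatic. First, the cutoff cannot be arbitrary: the paper uses the Choi--Schoen logarithmic cutoff $\eta_j(r)=2-\log r/\log r_j$, precisely so that $\int|\nabla\zeta_j|^2=-2\pi/\log r_j$ tends to $0$ as $r_j\to 0$; with a more naive (e.g.\ linear) cutoff, the Dirichlet energy of the cutoff would be $O(1)$ and the neck term in the Hessian would not be small. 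The paper then chooses $r$ so that $-2\pi/\log r<c_0/10nC_3$ and takes $\epsilon$ below a threshold $\epsilon_2$ that forces every $r_j<r$. Second, this threshold argument uses compactness of the conformal group of the torus ($S^1\times S^1$) and fails verbatim for the sphere bubbles, since $\mathrm{PSL}(2,\mathbb{C})$ is non-compact: the paper gets around this by using conformal invariance of energy to transport the estimate to $S^2$ itself (stereographic projection, $\epsilon_3$ in \eqref{e_3}), which normalizes away $D_j$. Your sketch treats the torus block and the sphere blocks symmetrically, and in particular says ``$d\Pi_u$ depends pointwise on $u$, so $W^{1,2}$-closeness suffices''; that is the right idea for all but the $\int|d\Pi_u(\nabla\tilde X)|^2$ term, which the paper controls separately (using that $d\Pi_u$ is an orthogonal projection, hence bounded by $\int|\nabla\tilde X|^2$ on the necks, which is where the log-cutoff energy bound enters). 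With those two points filled in, the rest of your argument (uniqueness of the maximum from concavity, $|D_sE_u(0)|=O(\epsilon)$ from the harmonicity of $v_0,v_j$, and the quadratic pinching \eqref{34}) is exactly what the paper does.
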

\begin{proof}
We assume that $d_B((u,T_\tau),\{(v_0,\mathcal{T}_{\tau_0}),\{v_j\}_{j=1}^n\})<\epsilon$ for some constant $\epsilon>0$ which will be chosen later, by Definition \ref{bubblenorm} we have the following
    \begin{equation}\label{1}
  |\tau-\tau_0|<\epsilon,        
    \end{equation}
    \begin{equation}\label{2}
        \begin{split}
            \Big(\int_{T_\tau\setminus\bigcup_{j}B_{r_j}(x_j)}|\nabla u-&\nabla (v_0\circ D_0\circ\iota_{\tau,\tau_0})|^2\Big)^{1/2}\\
            &+\Big(\int_{\bigcup_jB_{r_j}(x_j)}|\nabla (v_0\circ D_0\circ\iota_{\tau,\tau_0})|^2\Big)^{1/2}<\epsilon/3,
        \end{split}
    \end{equation}
    \begin{equation}\label{3}
        \sum_{j=1}^n\Big(\int_{\Omega_j}|\nabla u-\nabla(v_j\circ D_j)|^2\Big)^{1/2}+\Big(\int_{\mathbb{C}\setminus\Omega_j}|\nabla(v_j\circ D_j)|^2\Big)^{1/2}<\epsilon/3,
    \end{equation}
    \begin{equation}\label{4}
        \Big(\int_{\bigcup_j\{B_{r_j}(x_j)\setminus B_{r_j^2}(x_j)\}}|\nabla u|^2\Big)^{1/2}<\epsilon/3.
    \end{equation}
Now we define the following piecewise smooth cutoff function, which was introduced by Choi and Schoen \cite{CHS},  $\eta_j:[0,\infty)\to[0,1]$:
		\begin{equation}\label{388}
		\eta_j(r) =\left\{ \begin{array}{rcl}
		0, & \mbox{for}	& r<r_j^2, \\
		2-(\log r)/(\log r_j),   & \mbox{for} & r_j^2\leq r \leq r_j,\\
		1, & \mbox{for} & r>r_j, 
		\end{array}\right.
		\end{equation}
		so that
		\[\frac{d\eta_j}{dr}(r) =\left\{ \begin{array}{rcl}
		0, & \mbox{for}	& r<r_j^2, \\
		-1/r(\log r_j),   & \mbox{for} & r_j^2\leq r \leq r_j,\\
		0, & \mbox{for} & r>r_j, 
		\end{array}\right.\]
		
		and 
		\begin{equation}\label{cutoff}
		\int_{0}^{2\pi}\int_{r_j^2}^{r_j}\Big(\frac{d\eta_j}{dr}(r)\Big)^2rdrd\theta=-\frac{2\pi}{\log r_j}.	
		\end{equation}
We can choose a local conformal coordinate $z=x+\sqrt{-1}y$ centered at $x_j\in T_\tau$ and let $f(z)=\sqrt{x^2+y^2}$ so that we can define the $r_j$-ball $B_{r_j}(x_j)$ about $x_j$ as the set of points satisfying $r\leq r_j$. The radial cutoff function we will use in our constructions is then the map 
\begin{equation}\label{38}
    \zeta_j:B_{r_j}(x_j)\to[0,1],\quad\text{defined by }\zeta_j=\eta_j\circ f.
\end{equation}
For distinct $j$ we have the local coordinate centered at distinct $x_j$, thus the function $r$ in \eqref{38} is different for each $j$, but we don't strictly distinguish them and denote them by $r$ for all $j$ in abuse of notation and terminology. 
\begin{claim}\label{unstabletorus}
There exists $\epsilon>0$, such that for $d_B((u,T_\tau),\{(v_0,\mathcal{T}_{\tau_0}),\{v_j\}_{j=1}^n\})<\epsilon$, and
\begin{equation}\label{newvf}
    \Tilde{X}_{0,i}:=\prod_{j=1,...,n}\zeta_j X_{0,i}\circ D_0\circ\iota_{\tau,\tau_0},
\end{equation}
where $\zeta_j$ is given by \eqref{38}, we have 
\begin{equation}\label{40}
    -c_0\text{Id}< D^2_{s_0}E(\Pi\circ(u+\sum_{i=1}^{k_0}s_{0,i}\Tilde{X}_{0,i}),T_\tau)< -\frac{1}{c_0}\text{Id},\quad\forall s_0=(s_{0,1},...,s_{0,k_0})\in\bar{B}^{k_0}.
\end{equation}
\end{claim}
\begin{proof}[proof of the claim]
To simplify the notation, let
 \begin{align}
  &\delta^2_{s,t}E_{u}(s\Tilde{X}_{0,l_1}+t\Tilde{X}_{0,l_2},T_\tau):=\frac{d^2}{dsdt}E(\Pi\circ(u+s\Tilde{X}_{0,l_1}+t\Tilde{X}_{0,l_2}),T_\tau),\\
  &\delta^2_{s}E_{u}(s\Tilde{X}_{0,l_1},T_\tau):=\frac{d^2}{ds^2}E(\Pi\circ(u+s\Tilde{X}_{0,l_1}),T_\tau),
 \end{align}
for $\{l_1,l_2\}\subset\{1,...,k_0\}$. We observe that to prove \eqref{40} is to show that we can choose $\epsilon>0$ to make the following term
$\delta^2_{s,t}E_{u}(s\Tilde{X}_{0,l_1}+t\Tilde{X}_{0,l_2},T_\tau)-\delta^2_{s,t}E_{v_0}(sX_{0,l_1}+tX_{0,l_2},\mathcal{T}_{\tau_0})$
sufficiently small for all $s,t\in[0,1]$, $\{l_1,l_2\}\subset\{1,...,k_0\}$. It's enough to show the following term
\[\delta^2_{s}E_{u}(s\Tilde{X}_{0,i},T_\tau)-\delta^2_{s}E_{v_0}(sX_{0,i},\mathcal{T}_{\tau_0}),\quad i=1,...,k_0,\] 
could be made sufficiently small by choosing $\epsilon>0$. We observe that 
\begin{equation}
    \begin{split}
        \delta^2_{s}E_{u}(s\Tilde{X}_{0,i},T_\tau)&=\delta^2_{s}E_{u}(s\Tilde{X}_{0,i},T_\tau\setminus\cup_{i=1}^nB_{r_j}(x_j))\\
        &+\delta^2_{s}E_{u}(s\Tilde{X}_{0,i},\cup_{i=1}^nB_{r_j}(x_j)\setminus B_{r_j^2}(x_j)).
    \end{split}
\end{equation}
Which implies 
\begin{equation}\label{44}
\begin{split}
    |\delta^2_{s}&E_{u}(s\Tilde{X}_{0,i},T_\tau)-\delta^2_sE_{v_0}(sX_{0,i},\mathcal{T}_{\tau_0})|\\
    &<|\delta^2_{s}E_{u}(s\Tilde{X}_{0,i},T_\tau\setminus\cup_{i=1}^nB_{r_j}(x_j))-\delta^2_{s}E_{\tilde{v}_0}(sX_{0,i}\circ D_0\circ\iota_{\tau,\tau_0},T_\tau\setminus\cup_{i=1}^nB_{r_j}(x_j))|\\
    &+|\delta^2_{s}E_{\tilde{v}_0}(sX_{0,i}\circ D_0\circ\iota_{\tau,\tau_0},\cup_{i=1}^nB_{r_j}(x_j))|\\
    &+|\delta^2_{s}E_{\tilde{v}_0}(sX_{0,i}\circ D_0\circ\iota_{\tau,\tau_0},\mathcal{T}_{\tau})-\delta^2_sE_{v_0}(sX_{0,i},\mathcal{T}_{\tau_0})|\\
    &+|\delta^2_{s}E_{u}(s\Tilde{X}_{0,i},\cup_{i=1}^nB_{r_j}(x_j)\setminus B_{r_j^2}(x_j))|,
\end{split}    
\end{equation}
where $\Tilde{v}_0:=v_0\circ D_0\circ\iota_{\tau,\tau_0}$. We can make $|\delta^2_{s}E_{u}(s\Tilde{X}_{0,i},T_\tau)-\delta^2_sE_{v_0}(sX_{0,i},\mathcal{T}_{\tau_0})|$ sufficiently small by making each term in \eqref{44} sufficiently small. We will discuss how to bound each term in \eqref{44} in the following.

\begin{description}
\item[1] $|\delta^2_{s}E_{u}(s\Tilde{X}_{0,i},T_\tau\setminus\cup_{i=1}^nB_{r_j}(x_j))-\delta^2_{s}E_{\tilde{v}_0}(sX_{0,i}\circ D_0\circ\iota_{\tau,\tau_0},T_\tau\setminus\cup_{i=1}^nB_{r_j}(x_j))|$

We recall from \eqref{energyTaylor2} for any $X\in C^\infty(T_\tau,\mathbb{R}^N)$ we have the following,  
\begin{equation}\label{45}
    \begin{split}
        \frac{d^2}{ds^2}E(\Pi\circ(u+sX),T_\tau)=&2\Big\{\int_{T_\tau}\langle d\Pi_u(\nabla X),d\Pi_u(\nabla X)\rangle \\
	&+\int_{T_\tau}2\langle \text{Hess}\Pi_u(X,\nabla u),d\Pi_u(\nabla X)\rangle \\
	&+\int_{T_\tau}\langle \text{Hess}\Pi_u(X,\nabla u),\text{Hess}\Pi_u(X,\nabla u)\rangle \\
	&+\int_{T_\tau}\langle\nabla u,\big( 2\text{Hess}\Pi_u(X,\nabla X)+\nabla^3\Pi_u(X,X,\nabla u)\big)\rangle \Big\}\\
	&+o(s).
    \end{split}
\end{equation}
Since $X_{0,1}\in\Gamma(v_0^{-1}TM)$, so $d\Pi_{v_0}(\nabla X_{0,1})=\nabla X_{0,1}$, which implies 
\begin{equation}\label{47}
d\Pi_{\Tilde{v}_0}\big(\nabla(X_{0,1}\circ D_0\circ\iota_{\tau,\tau_0})\big)=\nabla(X_{0,1}\circ D_0\circ\iota_{\tau,\tau_0}).    
\end{equation}
Moreover, $\zeta_j$ has value $1$ on the domain $T_\tau\setminus\cup_{i=1}^nB_{r_j}(x_j)$, so $\Tilde{X}_{0,1}=X_{0,1}\circ D_0\circ\iota_{\tau,\tau_0}$ on $T_\tau\setminus\cup_{i=1}^nB_{r_j}(x_j)$, thus we have
\begin{equation}\label{48}
\int_{T_\tau\setminus\cup_{i=1}^nB_{r_j}(x_j)}|\nabla\Tilde{X}_{0,1}|^2=\int_{T_\tau\setminus\cup_{i=1}^nB_{r_j}(x_j)}|\nabla\big(X_{0,1}\circ D_0\circ\iota_{\tau,\tau_0}\big)|^2. 
\end{equation}
We observe that the only term in \eqref{45} not bounded by the energy difference of $u$ and $\Tilde{v}_0$ on the domain $T_\tau\setminus\cup_{i=1}^nB_{r_j}(x_j)$ is the term $\int|d\Pi_u(\nabla X)|^2$. Since $M$ is a smooth manifold and $d\Pi_u(\nabla X)$ is the orthogonal projection of $\nabla X$ on $T_uM$, by \eqref{2} we know that $u$ is close to $\tilde{v}_0$ on $T_\tau\setminus\cup_{i=1}^nB_{r_j}(x_j)$ in $W^{1,2}$ sense, thus implies that the term $\int|d\Pi_u(\nabla X)|^2$ can be bounded by $\int|d\Pi_{\tilde{v}_0}(\nabla X)|^2$ and \eqref{2} for any $X\in C^\infty(T_\tau,\mathbb{R}^N)$. Then $\eqref{47}$ and $\eqref{48}$ implies that 
\begin{equation}
\begin{split}
|\delta^2_{s}E_{u}(s\Tilde{X}_{0,i},T_\tau\setminus\cup_{i=1}^nB_{r_j}(x_j))-\delta^2_{s}E_{\tilde{v}_0}(sX_{0,i}&\circ D_0\circ\iota_{\tau,\tau_0},T_\tau\setminus\cup_{i=1}^nB_{r_j}(x_j))|\\
&<C(d_B(u,\{(v_0,\mathcal{T}_{\tau_0}),\{v_j\}_{j=1}^n\}))     
\end{split}
\end{equation}
for a continuous nonnegative function $C$ depending on the manifold $M$ and $\{X_{0,i}\}_{i=1}^n$, such that $C(0)=0$. 
\item[2]$|\delta^2_{s}E_{\tilde{v}_0}(sX_{0,i}\circ D_0\circ\iota_{\tau,\tau_0},\mathcal{T}_{\tau})-\delta^2_sE_{v_0}(sX_{0,i},\mathcal{T}_{\tau_0})|$

Since $\iota_{\tau,\tau_0}$ is the linear map of $\mathbb{C}$ keeping $1$ and sending $\tau$ to $\tau_0$, Conformal invariance of energy imply that
\begin{equation}
    |\delta^2_{s}E_{\tilde{v}_0}(sX_{0,i}\circ D_0\circ\iota_{\tau,\tau_0},\mathcal{T}_{\tau})-\delta^2_sE_{v_0}(sX_{0,i},\mathcal{T}_{\tau_0})|\leq C_1(|\tau-\tau_0|),
\end{equation}
where $C_1$ is a nonnegative continuous function, depending on $M$ and $\{X_{0,i}\}_{i=1}^n$, such that $C_1(0)=0$. 
\item[3]
$|\delta^2_{s}E_{\tilde{v}_0}(sX_{0,i}\circ D_0\circ\iota_{\tau,\tau_0},\cup_{i=1}^nB_{r_j}(x_j))|$

By Claim \ref{forunstable}, there exists $\delta(c_0/10)>0$, such that  
\[-\frac{c_0}{10}\text{Id}<D^2_{s_j}E_{v_j}(s_j,B_\rho(x))<\frac{c_0}{10}\text{Id},\]
for $E(v_j,B_{\rho}(x))<\delta(\frac{c_0}{10})$. Since energy is invariant under conformal diffeomorphism of the domain, the $\delta(\epsilon)$ chosen in Claim \ref{forunstable} remains the same for $v_j\circ D_j$, where $D_0\in S^1\times S^1$ and $D_j\in\text{PSL}(2,\mathbb{C})$ for each $j=1,...,n$. If the bubble norm $d_B(u,\{(v_0,\mathcal{T}_{\tau_0}),\{v_j\}_{j=1}^n\})<(\delta(\epsilon))^{1/2}$, then \eqref{2} give us $E(v_0\circ D_0\circ\iota_{\tau,\tau_0},\cup_{i=1}^nB_{r_j}(x_j)))<\delta(\epsilon)$, so Claim \ref{forunstable} implies the following inequality: 
\begin{equation}
    |\delta^2_{s}E_{\tilde{v}_0}(sX_{0,i}\circ D_0\circ\iota_{\tau,\tau_0},\cup_{i=1}^nB_{r_j}(x_j))|<\frac{c_0}{10}.
\end{equation}
\item[4]
$|\delta^2_{s}E_{u}(s\Tilde{X}_{0,i},\cup_{i=1}^nB_{r_j}(x_j)\setminus B_{r_j^2}(x_j))|$ 

By \eqref{45} we know that the only term not bounded by the energy of $u$ on the domain $\cup_{i=1}^nB_{r_j}(x_j)\setminus B_{r_j^2}(x_j))$ is the term $\int|d\Pi_u(\nabla X)|^2$. Since $d\Pi_u(\nabla X)$ is the orthogonal projection of $\nabla X$ on $T_uM$, thus it's bounded by $\int|\nabla X|^2$ on $\cup_{i=1}^nB_{r_j}(x_j)\setminus B_{r_j^2}(x_j).$
\begin{equation}\label{50}
\begin{split}
|\delta^2_{s}E_{u}&(s\Tilde{X}_{0,i},\cup_{i=1}^nB_{r_j}(x_j)\setminus B_{r_j^2}(x_j))|\\
<&C_2\big(E(u,\cup_{i=1}^nB_{r_j}(x_j)\setminus B_{r_j^2}(x_j))\big)+\int_{\cup_{i=1}^nB_{r_j}(x_j)\setminus B_{r_j^2}(x_j)}|\nabla\Tilde{X}_{0,i}|^2\\
=&C_2\big(E(u,\cup_{i=1}^nB_{r_j}(x_j)\setminus B_{r_j^2}(x_j))\big)\\
+&\int_{\cup_{i=1}^nB_{r_j}(x_j)\setminus B_{r_j^2}(x_j)}|\nabla\Big(\prod_{j=1,...,n}\zeta_j X_{0,i}\circ D_0\circ\iota_{\tau,\tau_0}\Big)|^2,
\end{split}
\end{equation}
where $C_2$ is a nonnegative continuous function depending on $M$ and $\{X_{0,i}\}_{i=1}^n$ such that $C_2(0)=0$. By how we construct $\zeta_j$ \eqref{38} and \eqref{cutoff}, we have the following inequality
\begin{equation}
    \begin{split}
        \int_{\cup_{i=1}^nB_{r_j}(x_j)\setminus B_{r_j^2}(x_j)}&|\nabla\Big(\prod_{j=1,...,n}\zeta_j X_{0,i}\circ D_0\circ\iota_{\tau,\tau_0}\Big)|^2\\
        &=\bigcup_{j=1}^n\int_{B_{r_j}(x_j)\setminus B_{r_j^2}(x_j)}|\nabla(\zeta_jX_{0,i}\circ D_0\circ\iota_{\tau,\tau_0})|^2\\
        &<\bigcup_{j=1}^n\Big\{C_3\int_{B_{r_j}(x_j)\setminus B_{r_j^2}(x_j)}|\nabla\zeta_j|^2 \\
        &+2\int_{B_{r_j}(x_j)\setminus B_{r_j^2}(x_j)}|\nabla(X_{0,i}\circ D_0\circ\iota_{\tau,\tau_0})|^2\Big\},
    \end{split}
\end{equation}
where $C_3$ is a constant depending on $X_{0,i}$. Choosing $r$  such that $-\frac{2\pi}{\log r}<\frac{c_0}{10nC_3}$. 
\begin{equation}\label{52}
\epsilon_2:=\min\{E(v_0\circ D_0,B_{\rho}(x)),B_{\rho}(x)\subset \mathcal{T}_{\tau_0},\rho\geq r/2,D_0\in S^1\times S^1\}.
\end{equation}
We know that $\epsilon_2>0$. Then $d_B(u,\{(v_0,\mathcal{T}_{\tau_0}),\{v_j\}_{j=1}^n\})<\min\{\epsilon_2^{1/2},\delta(c_0/20n)\}$ implies that
\[\int_{\bigcup_jB_{r_j}(x_j)}|\nabla (v_0\circ D_0\circ\iota_{\tau,\tau_0})|^2<\min\{\epsilon_2^{1/2},\delta(c_0/20)\},\]
by \eqref{52} and Claim \ref{forunstable} we have the following inequality
\begin{equation}\label{53}
    \begin{split}
        \bigcup_{j=1}^n\Big\{C_3\int_{B_{r_j}(x_j)\setminus B_{r_j^2}(x_j)}|\nabla\zeta_j|^2+2\int_{B_{r_j}(x_j)\setminus B_{r_j^2}(x_j)}|&\nabla(X_{0,i}\circ D_0\circ\iota_{\tau,\tau_0})|^2\Big\}\\
        &<n\big(C_3\times\frac{c_0}{10nC_3}+\frac{c_0}{20n}\big)\\
        &<\frac{c_0}{5}.
    \end{split}
\end{equation}
\end{description}
By combining all the above estimates we know that if the bubble norm is bounded by $\epsilon>0$, i.e., $d_B((u,\mathcal{T}_{\tau}),\{(v_0,\mathcal{T}_{\tau_0}),\{v_j\}_{j=1}^n\})<\epsilon$, then we can choose $\epsilon$ sufficiently small so $\delta^2_{s,t}E_{u}(s\Tilde{X}_{0,l_1}+t\Tilde{X}_{0,l_2},T_\tau)-\delta^2_{s,t}E_{v_0}(sX_{0,l_1}+tX_{0,l_2},\mathcal{T}_{\tau_0})$ is sufficiently small for all ${l_1,l_2}\subset\{1,...,n\}$ thus implies 
\[-c_0\text{Id}< D^2_{s_0}E(\Pi\circ(u+\sum_{i=0}^{k_0}s_{0,i}\Tilde{X}_{0,i}),T_\tau)< -\frac{1}{c_0}\text{Id},\quad\forall s_0=(s_{0,1},...,s_{0,k_0})\in\bar{B}^{k_0},\]
which is the desired result.
\end{proof}
Claim \ref{unstabletorus} implies that for $d_B((u,\mathcal{T}_{\tau}),\{(v_0,\mathcal{T}_{\tau_0}),\{v_j\}_{j=1}^n\})<\epsilon$, if the body map $v_0:\mathcal{T}_{\tau_0}\to M$ has Index$(v_0)=k_0$, then we can construct $\Tilde{X}_{0,i}$, $i=1,...,k_0$, so that $d\Pi_u(\Tilde{X}_{0,i})$ form a $k_0$ dimensional subspace of $\Gamma(u^{-1}TM)$ on which the energy of $u$ is concave. For $v_j:S^2\to M$ Index$(v_j)=k_j$, $j=1,...,n$, we can construct $\Tilde{X}_{j,i}$ similarly as we construct $\Tilde{X}_{0,i}$. The only difference is the cutoff function $\zeta_j$. In Claim \ref{unstabletorus}, we argue that $d_B((u,\mathcal{T}_{\tau}),\{(v_0,\mathcal{T}_{\tau_0}),\{v_j\}_{j=1}^n\})<\epsilon$ can imply the following term 
\[|\delta^2_{s}E_{u}(s\Tilde{X}_{0,i},\cup_{i=1}^nB_{r_j}(x_j)\setminus B_{r_j^2}(x_j))|\]
to be sufficiently small. To do so, we need to bound the term in \eqref{50}. If we consider the harmonic sphere $v_j:S^2\to M$ instead of harmonic torus $v_0$, with Index($v_j$)=$k_j$ and the corresponding $\{X_{i,j}\}_{i=1}^{k_j}\subset\Gamma(v_j^{-1}TM)$. We know that from Claim \ref{unstabletorus} that \eqref{50} can bounded by choosing the energy of $u$ on the domain $\cup_{j=1}^n(B_{r_j}(x_j)\setminus B_{r_j^2}(x_j))$ sufficiently small, and the energy of the cutoff function which has support on $\cup_{j=1}^n(B_{r_j}(x_j)\setminus B_{r_j^2}(x_j))$. If we choose some constant $r<1$ so that $-\frac{2\pi}{\log r}<\varepsilon$ for some given $\varepsilon>0$ and let 
\[\epsilon_2:=\min\{E(v_0\circ D_0,B_{\rho}(x)),B_{\rho}(x)\subset \mathcal{T}_{\tau_0},\rho\geq r/2,D_0\in S^1\times S^1\},\]
since $\epsilon_2>0$, then we have the energy of the cutoff function is bounded by $\varepsilon$ whenever the energy of the harmonic torus is bounded by $\epsilon_2$ on the domain which the cutoff function is defined on. But this estimate no longer works for harmonic sphere, since the conformal automorphism PSL(2,$\mathbb{C}$) of $\mathbb{C}\cup\{\infty\}\simeq S^2$ is not compact. However, we can overcome this by the conformal invariance property of energy functional. Now we fix $j\in\{1,...,n\}$. We choose $r$ so that $-\frac{2\pi}{\log r}<\varepsilon$ for some given $\varepsilon>0$. Let $\Pi_S$ denote the stereographic projection from $S^2\setminus\{\text{north pole}\}$ to $\mathbb{C}$, and let $\tilde{r}$ denote the radius of the geodesic ball $\Pi^{-1}_S(B_r(0))\subset S^2$, now we consider the following
\begin{equation}\label{e_3}
\epsilon_3:=\min\{E(v_j,B_{\rho}(x)),B_{\rho}(x)\subset S^2,\rho\geq\tilde{r}/2\},    
\end{equation}
where $B_\rho(x)$ is the geodesic ball centered at $x$ with radius $\rho$, we don't distinguish it from the ball in $\mathbb{C}$ in abuse of notation. Then $d_B((u,T_\tau),\{(v_0,\mathcal{T}_{\tau_0}),\{v_j\}_{j=1}^n\})<\epsilon_3^{1/2}$ implies that there exists $\Omega_j\subset\mathbb{C}$ and $D_j\in\text{PSL}(2,\mathbb{C})$ (see Definition \ref{bubblenorm}) such that 
\begin{equation}\label{56}
\Big(\int_{\Omega_j}|\nabla u-\nabla(v_j\circ D_j)|^2\Big)^{1/2}+\Big(\int_{\mathbb{C}\setminus\Omega_j}|\nabla(v_j\circ D_j)|^2\Big)^{1/2}<\frac{\epsilon_3^{1/2}}{3}.
\end{equation}
To make the discussion easier, we assume that there's only one $B_{r_l}(x_l)\subset B_{r_j^2}(x_j)$,
\begin{equation}
    \begin{split}
\Omega_j&=B_{r_j^2}(x_j)\setminus\Big\{\bigcup B_{r_i}(x_i),\: B_{r_i}(x_i)\subset B_{r_j^2}(x_j)\Big\}\\
&=B_{r_j^2}(x_j)\setminus B_{r_l}(x_l).
    \end{split}
\end{equation}
We can think of this case as the bubble map $v_j$ itself has another bubble map $v_l$ (see case 1 in Theorem \ref{torusconvergence}). In this case, we need to construct two cutoff function, one for the bubble map $v_j$ and another one for \textit{excluding} the bubble map $v_l$ of $v_j$. We focus on constructing the later first. So \eqref{56} implies that 
\[\int_{\mathbb{C}\setminus B_{r^2_j}(x_j)}|\nabla(v_j\circ D_j)|^2+\int_{ B_{r_l}(x_l)}|\nabla(v_j\circ D_j)|^2<\epsilon_3,\]
and by the conformal invariance of energy we have
\[\int_{\mathbb{C}\setminus D_j\circ B_{r^2_j}(x_j)}|\nabla v_j|^2+\int_{ D_j\circ B_{r_l}(x_l)}|\nabla v_j|^2<\epsilon_3.\]
By how $\epsilon_3$ is chosen, we know that $D_j\circ B_{r_l}(x_l)$ is a ball with radius $r_l'$ no larger than $r$, say $B'_{r'_l}(x_l)=D_j\circ B_{r_l}(x_l)$. We can construct the cutoff function $\zeta_{j,l}=\eta_{j,l}\circ f$ similarly to \eqref{388}, but with value $1$ on $\mathbb{C}\setminus D_j\circ B_{r_l}(x_l)=\mathbb{C}\setminus B_{r_l'}(x_l)$ instead of $\mathbb{C}\setminus B_{r_l}(x_l)$, and support on $(\mathbb{C}\setminus B_{r_l'^2}(x_l))\subset(\mathbb{C}\setminus D_j\circ B_{r_l^2}(x_l))$ i.e.,
\begin{equation}\label{59}
\eta_{j,l}(r) =\left\{ \begin{array}{rcl}
0, & \mbox{for}	& r<r_l'^2, \\
2-(\log r)/(\log r_l'),   & \mbox{for} & r_l'^2\leq r \leq r_l',\\
		1, & \mbox{for} & r>r_l', 
		\end{array}\right.
		\end{equation}
Thus we have
\begin{equation}\label{61}
\int_{D_j\circ B_{r_l}(x_l)}|\nabla \zeta_{j,l}|^2=\int_{B_{r_l}(x_l)}|\nabla(\zeta_{j,l}\circ D_j)|^2<\varepsilon.    
\end{equation}
So the inequality of \eqref{53} still holds for harmonic spheres. Now we construct the cutoff function for $v_j$. Since \eqref{56} implies that 
\[\int_{\mathbb{C}\setminus B_{r^2_j}(x_j)}|\nabla(v_j\circ D_j)|^2=\int_{\mathbb{C}\setminus D_j\circ B_{r^2_j}(x_j)}|\nabla v_j|^2<\epsilon_3.\]
By how we choose $\epsilon_3$, see \eqref{e_3}, we know that $S^2\setminus\Pi_S^{-1}(D_j\circ B_{r^2_j}(0))$ is a geodesic ball centered at north pole and has radius smaller than $\tilde{r}$. Since $\title{r}$ is the radius of $\Pi^{-1}_S(B_r(0))$ for some $r$ satisfying the inequality $-\frac{2\pi}{\log r}<\varepsilon$. Then we can construct the cutoff function $\zeta_{j,0}$ the same way as we constructed $\zeta_{j,l}$, such that $\zeta_{j,0}$ has value $1$ on $D_j\circ B_{r_j^2}(x_j)$, with support on $D_j\circ B_{r_j}(x_j)$, and its energy bounded by $\varepsilon$, i.e.,
\begin{equation}\label{622}
\int_{D_j\circ B_{r_j}(x_l)}|\nabla \zeta_{j,0}|^2=\int_{B_{r_j}(x_j)}|\nabla(\zeta_{j,0}\circ D_j)|^2<\varepsilon.    
\end{equation}
Now we consider the following
\begin{equation}
\Tilde{X}_{j,i}:=\Big(\zeta_{j,0}\zeta_{j,l} X_{j,i}\Big)\circ D_j, 
\quad i\in\{1,...,k_j\}.\end{equation}
Since $\zeta_{j,0}\circ D_j$ and $\zeta_{j,l}\circ D_j$ both has support on $T_\tau$, so $\Tilde{X}_{j,i}\in C^\infty(T_\tau,\mathbb{R}^N)$. 
We can argue similarly as Claim \ref{unstabletorus} and conclude that 
\begin{equation}\label{62}
-c_0\text{Id}< D^2_{s_j}E(\Pi\circ(u+\sum_{i=1}^{k_j}s_{j,i}\Tilde{X}_{j,i}),T_\tau)< -\frac{1}{c_0}\text{Id},\quad\forall s_j=(s_{j,1},...,s_{j,k_j})\in\bar{B}^{k_j}.    
\end{equation}
Combining \eqref{40} and \eqref{62} implies that 
\begin{equation}
    -\frac{1}{c_0}\text{Id}\leq D^2_sE_u(s,T_\tau)\leq -c_0\text{Id},\quad\forall s\in\bar{B}^k,
\end{equation}
where $k=\sum_{j=0}^nk_j$, and
\[E_u(s,T_\tau)=E\big(\Pi\circ\big(u+\sum_{i=1,j=1}^{k_j,n}s_{j,i}\Tilde{X}_{j,i}\big),T_\tau\big),\quad s=(s_{0,1},...,s_{j,k_j},s_{j+1,1},...,s_{n,k_n})\in\bar{B}^k,\]
which is the desired result \eqref{33}.

Since $\{d\Pi_u(\tilde{X}_{j,i})\}$ is a subspace of $\Gamma(u^{-1}TM)$ on which the energy is concave. So $E_u(\cdot,T_\tau)$ has a unique maximum on $\bar{B}^k$. Since $\{(v_0,\mathcal{T}_{\tau_0}),\{v_j\}\}$ is a collection of harmonic maps, so for $E_{v_0}(\cdot,\mathcal{T}_{\tau_0}):\bar{B}^{k_0}\to[0,\infty)$ defined as
\[E_{v_0}(s_0,\mathcal{T}_{\tau_0})=E(\Pi\circ\big(v_0+\sum_{i=1}^{k_0}s_{0,i}X_{0,i}\big),\mathcal{T}_{\tau_0}),\quad s_0=(s_{0,1},...,s_{0,k_0})\in\bar{B}^{k_0},\]
we know that $E_{v_0}(\cdot,\mathcal{T}_{\tau_0})$ has a unique maximum at $0\in\bar{B}^{k_0}$. Similarly, for $E_{v_j}(\cdot,\mathbb{C}):\bar{B}^{k_j}\to[0,\infty)$ defined as 
\[E_{v_j}(s_j,\mathbb{C})=E(\Pi\circ\big(v_j+\sum_{i=1}^{k_j}s_{j,i}X_{j,i}\big),\mathbb{C}),\quad s_j=(s_{j,1},...,s_{j,k_j})\in\bar{B}^{k_j},\:j=1,...,n,\]
$E_{v_j}(\cdot,\mathbb{C})$ has a unique maximum at $0\in\bar{B}^{k_j}$.
We can argue as in Claim \ref{unstabletorus} and show that $|\frac{d}{ds}E(\Pi\circ(u+s\Tilde{X}_{0,i}),T_\tau)-\frac{d}{ds}E(\Pi\circ(v_0+sX_{0,i}),\mathcal{T}_{\tau_0})|$ and $|\frac{d}{ds}E(\Pi\circ(u+s\Tilde{X}_{j,i}),T_\tau)-\frac{d}{ds}E(\Pi\circ(v_j+sX_{j,i}),\mathbb{C})|$, for $j=1,...,n$, can be sufficiently small so that $E_u(\cdot,T_\tau)$ has a unique maximum at $m(u)\in B^k_{c_0/\sqrt{10}}$. 
\end{proof}
\begin{remark}\label{unstable remark}
The family of vector fields $\{\tilde{X}_{j,i}\}$ constructed in Lemma \ref{unstable} depends on $(u,T_\tau)$. However, by Claim \ref{open unstable} there exists $\delta(u)>0$ such that for any $(u',\mathcal{T}_{\tau'})\in W^{1,2}(\mathcal{T}_{\tau'}, M)$ with $|\tau-\tau'|<\delta(u)$ and
\[\int_{\mathcal{T}_{\tau'}}|\nabla u'-\nabla( u\circ\iota_{\tau,\tau'})|^2<\delta(u),\] 
we have $d_B((u',\mathcal{T}_{\tau'}),\{(v_0,\mathcal{T}_{\tau_0}),\{v_j\}_{j=1}^n\})<\epsilon$. Moreover, \eqref{bubble1}, \eqref{bubble2} and \eqref{bubble3} hold with the same domains $B_{r_j}(x_j)$ and conformal automorphisms $D_j$ for both $(u',\mathcal{T}_{\tau'})$ and $(u,\mathcal{T}_{\tau})$. Since $\{\tilde{X}_{j,i}\}$ depends on $B_{r_j}(x_j)$ and $D_j$, we know that the variation of $(u',\mathcal{T}_{\tau'})$ defined as the following  \[\Pi\circ(u'+\tilde{X}_{j,i}\circ\iota_{\tau,\tau'})\]
also satisfies \eqref{33} and \eqref{34}.
\end{remark}
Now we focus on the case where the conformal structure diverges and the limit maps are possibly several harmonic spheres.
Given a collection of harmonic sheres $\{v_j\}_{j=1}^n$, $v_j:S^2\to M$. Assume that $\text{Index}(v_j)=k_j$, by Definition \ref{indexdef}, there exists subspace  $\{X_{j,i}\}_{i=1}^{k_j}\subset\Gamma(v_j^{-1}M)$ on which the index form is negative definite, by rescaling $\{X_{j,i}\}_{i=1}^{k_j}$ (which we still denote by $\{X_{j,i}\}_{i=1}^{k_j}$), we have that for 
\[E_{v_j}(s_j,\mathbb{C})=E(\Pi\circ\big(v_j+\sum_{i=1}^{k_j}s_{j,i}X_{j,i}\big),\mathbb{C}),\quad s_j=(s_{j,1},...,s_{j,k_j})\in\bar{B}^{k_j},\:j=1,...,n,\]
there exist $c_0<1$ such that 
\begin{equation}\label{unstableassumption2}
-\frac{1}{2c_0}\text{Id}< D^2_{s_j}E_{v_j}(s_j,\mathbb{C})< -2c_0\text{Id},\quad\forall s_j\in\bar{B}^{k_j},
\end{equation}
for each $j=1,...,n$.
\begin{lemma}[Unstable lemma for degenerated torus]\label{unstable1}
Let $\{v_j\}_{j=1}^n$, $v_j:S^2\to M$, be given as above. There exists $\epsilon>0$, such that if a map $u:\mathcal{T}_{\tau}\to M$, $\tau\in\mathcal{M}$, $u\in W^{1,2}(T_\tau,M)$ has sufficiently small bubble norm with $\{v_j\}_{j=1}^n$, i.e., \[d_B((u,\mathcal{T}_{\tau}),\{v_j\}_{j=1}^n)<\epsilon.\]
Then we can construct $\{\Tilde{X}_{j,i}\}\subset C^\infty(T_\tau,\mathbb{R}^N)$, so that for the smooth function \[E_u(\cdot,T_\tau):\bar{B}^k\to[0,\infty),\]
which is defined by
\[E_u(s,T_\tau)=E\big(\Pi\circ\big(u+\sum_{i=1,j=1}^{k_j,n}s_{j,i}\Tilde{X}_{j,i}\big),T_\tau\big),\quad s=(s_{1,1},...,s_{j,k_j},s_{j+1,1},...,s_{n,k_n})\in\bar{B}^k,\]
where $k=\sum_{j=1}^nk_j$, the following holds:
\begin{equation}
    -\frac{1}{c_0}\text{Id}\leq D^2_sE_u(s,T_\tau)\leq -c_0\text{Id},\quad\forall s\in\bar{B}^k.
\end{equation}
Moreover, $E_u(\cdot,T_\tau)$ has a unique maximum at $m(u)\in B^k_{c_0/\sqrt{10}}$ and
\begin{equation}
E_u(m(u),T_\tau)-\frac{1}{2c_0}|s-m(u)|^2\leq E_u(s,T_\tau)\leq E_u(m(u),T_\tau)-\frac{c_0}{2}|s-m(u)|^2,  \end{equation}
for all $s\in\bar{B}^k.$
\end{lemma}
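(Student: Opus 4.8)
The plan is to follow the proof of Lemma \ref{unstable} almost line by line, treating the degenerated body map $v_1$ as a distinguished harmonic sphere and $v_2,\dots,v_n$ as ordinary bubbles. After the change of parameters in the proof of Theorem \ref{torusconvergence} we view $\mathcal{T}_\tau\simeq S^1\times[-L,L]\subset S^1\times\mathbb{R}$ and regard $v_1$ as a harmonic map on $S^1\times\mathbb{R}$, extended to $S^2$ by removable singularities. First I would unpack $d_B((u,\mathcal{T}_\tau),\{v_j\}_{j=1}^n)<\epsilon$ through Case 2 of Definition \ref{bubblenorm}, obtaining balls $B_{r_j}(x_j)\subset\mathcal{T}_\tau$, pairwise disjoint regions $\Omega_j$, conformal maps $D_j$, and the inequalities \eqref{bubble4}, \eqref{bubble5}, \eqref{bubble6}. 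For the bubbles I would build $\tilde X_{j,i}:=(\zeta_{j,0}\zeta_{j,l}\cdots X_{j,i})\circ D_j$ exactly as the sphere-bubble vector fields were built after Claim \ref{unstabletorus}, using the logarithmic cut-offs of \eqref{388} and the positivity of the quantities $\epsilon_3$ of \eqref{e_3} to handle the non-compactness of $\text{PSL}(2,\mathbb{C})$. For the body $v_1$ I would in addition introduce a cut-off $\zeta_{1,\pm}$ supported in the two collars $S^1\times([-L,-L+\ell]\cup[L-\ell,L])$ and equal to $1$ on $S^1\times[-L+\ell,L-\ell]$, so that $\tilde X_{1,i}:=(\zeta_{1,\pm}\,\zeta_{1,l}\cdots X_{1,i})\circ D_1$ descends to a \emph{globally defined} smooth vector field on $\mathcal{T}_\tau$; the second term of \eqref{bubble4} forces $L$ to be large once $\epsilon$ is small, so these collars can be taken wide enough that $\int|\nabla\zeta_{1,\pm}|^2<\varepsilon$.

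With $\tilde X_{j,i}\in C^\infty(\mathcal{T}_\tau,\mathbb{R}^N)$ in hand, I would estimate $|\delta^2_s E_u(s\tilde X_{j,i},\mathcal{T}_\tau)-\delta^2_s E_{v_j}(sX_{j,i},\mathbb{C})|$ by splitting $\mathcal{T}_\tau$ into $\Omega_j$, the collars where the cut-offs vary, and the complement, mirroring \eqref{44}--\eqref{53}. On $\Omega_j$ the comparison with $\delta^2 E_{v_j}$ follows from \eqref{bubble5} (or \eqref{bubble4} for the body) together with conformal invariance of energy under $D_j$; here there is no term analogous to $|\tau-\tau_0|$ in \eqref{44}, its role being played by the energy of $v_j$ on the discarded part of its domain, which \eqref{bubble4} and \eqref{bubble5} make small, and by Claim \ref{forunstable} on the small balls. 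On the collars the only term of \eqref{45} not controlled by the small energy \eqref{bubble6} of $u$ there is $\int|d\Pi_u(\nabla\tilde X_{j,i})|^2\le\int|\nabla\tilde X_{j,i}|^2$, which expands into a piece $\le C\sum\int|\nabla\zeta|^2<\varepsilon$ and a piece $\le C\int_{\{\text{small ball}\}}|\nabla X_{j,i}|^2$ that is small by Claim \ref{forunstable}. Summing over $i,j$ and shrinking $\epsilon$ yields $-\frac1{c_0}\text{Id}\le D^2_s E_u(s,\mathcal{T}_\tau)\le -c_0\text{Id}$ on $\bar B^k$ with $k=\sum_{j=1}^n k_j$. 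Running the same comparison one order lower, for the first variations $\frac{d}{ds}E_u(\Pi\circ(u+s\tilde X_{j,i}),\mathcal{T}_\tau)-\frac{d}{ds}E_{v_j}(\Pi\circ(v_j+sX_{j,i}),\mathbb{C})$, and using that each $E_{v_j}(\cdot,\mathbb{C})$ has its unique maximum at $0\in\bar B^{k_j}$, shows that the strictly concave $E_u(\cdot,\mathcal{T}_\tau)$ has its unique maximum at some $m(u)\in B^k_{c_0/\sqrt{10}}$; the quadratic pinching then follows by Taylor-expanding $E_u(\cdot,\mathcal{T}_\tau)$ around $m(u)$ using the Hessian bounds.

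The main obstacle, and the only genuinely new point compared with Lemma \ref{unstable}, is the body term. There the body vector field $X_{0,i}\circ D_0\circ\iota_{\tau,\tau_0}$ is automatically global on $\mathcal{T}_\tau$ because $\iota_{\tau,\tau_0}$ is a torus diffeomorphism; here $v_1$ lives on the non-compact cylinder $S^1\times\mathbb{R}$, so $X_{1,i}$ must be cut off near the two ends $\pm L$ of the finite cylinder $\mathcal{T}_\tau=S^1\times[-L,L]$ in order to descend to a smooth section over $\mathcal{T}_\tau$. Making this precise requires three facts to hold simultaneously: (i) the collars at $\pm L$ are wide enough that $\int|\nabla\zeta_{1,\pm}|^2<\varepsilon$, which is possible precisely because the second term of \eqref{bubble4} forces $L\to\infty$ as $d_B\to 0$; (ii) $X_{1,i}\circ D_1$ already carries little energy on those collars, so that cutting it off there perturbs $\delta^2 E$ only slightly, which holds by Claim \ref{forunstable} applied to $v_1$, again via \eqref{bubble4}; and (iii) after extending $v_1$ to $S^2$ the quantity $\epsilon_3$ of \eqref{e_3} remains strictly positive for $v_1$, so the $\text{PSL}(2,\mathbb{C})$-invariance argument still governs its sub-bubbles. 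Once these compatibilities and the order of choice of $r$, the $\epsilon_3$'s and $\epsilon$ are checked, the remainder is a faithful transcription of Claim \ref{unstabletorus}, with \eqref{bubble1}, \eqref{bubble2}, \eqref{bubble3} replaced by \eqref{bubble4}, \eqref{bubble5}, \eqref{bubble6} and every reference to $|\tau-\tau_0|$ deleted.
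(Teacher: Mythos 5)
Your overall strategy matches the paper's: treat $v_2,\dots,v_n$ exactly as the sphere bubbles in Lemma~\ref{unstable}, extend the body map $v_1$ to $S^2$ by removable singularities, and cut off $X_{1,i}$ near the two ends of the finite cylinder $\mathcal{T}_\tau\simeq S^1\times[-L,L]$ so that $\tilde X_{1,i}$ descends to the torus. The difference is in how you control the new cut-off: you use a linear cut-off in collars $S^1\times[\pm(L-\ell),\pm L]$ of large width $\ell$, while the paper uses a logarithmic cut-off on small geodesic balls around the two poles, controlled by the quantity
\[
\epsilon_4:=\min\bigl\{E(\tilde v_1,B_\rho(x)):B_\rho(x)\subset S^2,\ \rho\ge \tilde r/2\bigr\}
\]
for the extended sphere $\tilde v_1=v_1\circ e^z\circ\Pi_S$, which is the direct analogue of $\epsilon_3$ of \eqref{e_3}.

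The step in your write-up that does not hold as stated is point (ii): ``$X_{1,i}\circ D_1$ already carries little energy on those collars\dots which holds by Claim~\ref{forunstable} applied to $v_1$, again via \eqref{bubble4}.'' Inequality \eqref{bubble4} only controls the energy of $v_1\circ D_1$ on $S^1\times\mathbb{R}\setminus\bigl(T_\tau\setminus\bigcup_j B_{r_j}(x_j)\bigr)$, i.e.\ on the two semi-infinite ends \emph{outside} the strip (together with the bubble balls), not on the interior collars $S^1\times[\pm(L-\ell),\pm L]$ where your cut-off actually has its gradient. Nothing in Definition~\ref{bubblenorm} Case~2 prevents $v_1\circ D_1$ from concentrating a definite amount of energy near $t=\pm(L-\ell)$, so Claim~\ref{forunstable} cannot be invoked directly for those collars from \eqref{bubble4} alone. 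This is precisely where the paper's $\epsilon_4$ argument is doing work: from $\epsilon^2/9<\epsilon_4$ it concludes that the images $D_1\circ B_{\rho_i}$ of the two ends are geodesic balls of radius $<\tilde r$, and then the transition annulus of the log cut-off sits inside a ball of radius comparable to $\tilde r$, whose $\tilde v_1$-energy is small because $\tilde v_1$ is a fixed smooth map and $\tilde r$ has been chosen small. Your linear-collar variant can be salvaged — for instance by a pigeonhole in $t\in[L/2,L]$ to find a slab of width $\sim\sqrt L$ with $v_1\circ D_1$-energy $\lesssim E(v_1)/\sqrt L$, which simultaneously makes the linear cut-off energy $\sim 1/\sqrt L$ small — but some such additional device is needed; the appeal to \eqref{bubble4} and Claim~\ref{forunstable} as written is circular on the interior collars. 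Once this is repaired (or the paper's $\epsilon_4$ quantity is adopted for the ends, not only for the sub-bubbles as in your point (iii)), the rest of your transcription of Claim~\ref{unstabletorus} goes through as you describe.
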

\begin{proof}
From Definition \ref{bubblenorm}, we know that $d_B((u,\mathcal{T}_{\tau}),\{v_j\}_{j=1}^n)<\epsilon$ implies the following:
   \begin{equation}\label{68}
        \begin{split}
        \Big(\int_{T_\tau\setminus\bigcup_{j}B_{r_j}(x_j)}|\nabla u-&\nabla (v_1\circ D_1)|^2\Big)^{1/2}\\
            &+\Big(\int_{S^1\times\mathbb{R}\setminus (T_\tau\setminus\bigcup_jB_{r_j}(x_j)))}|\nabla (v_1\circ D_1)|^2\Big)^{1/2}<\epsilon/3,
        \end{split}
    \end{equation}
    \begin{equation}\label{69}
        \sum_{j=1}^n\Big(\int_{\Omega_j}|\nabla u-\nabla(v_j\circ D_j)|^2\Big)^{1/2}+\Big(\int_{\mathbb{C}\setminus\Omega_j}|\nabla(v_j\circ D_j)|^2\Big)^{1/2}<\epsilon/3,
    \end{equation}
    \begin{equation}\label{70}
        \Big(\int_{\bigcup_j\{B_{r_j}(x_j)\setminus B_{r_j^2}(x_j)\}}|\nabla u|^2\Big)^{1/2}<\epsilon/3.
    \end{equation}
So we can argue as Lemma \ref{unstable}, use cutoff functions to construct $\tilde{X}_{j,i}$. For the bubble map $\{v_j\}_{j=2}^n$, \eqref{69} and \eqref{70} give us the same condition as \eqref{3} and \eqref{4}, so we can define cutoff functions as we did in Lemma \ref{unstable} and construct $\{\tilde{X}_{j,i}\}$ for $j\geq 2$, $i\in\{1,...,k_j\}$. The main difference is \eqref{68}. We use $(t,\theta)$ as parameters on $\mathcal{T}_{\tau}$. Let $\theta'=\text{arg}(\tau)$, and let $z'=t+\sqrt{-1}\theta=e^{-\sqrt{-1}(\frac{\pi}{2}-\theta')}z$ be another conformal parameter system on $\mathcal{T}_{\tau}$. We can conformally expand the torus such that the length of the circle of parameter $\theta$ is one, and we denote the length of the parameter $t$ by $2L$. So we can regard $u$ as a map defined on $S^1\times[-L,L]$. Recall that $S^2\setminus\{$north pole, south pole$\}\simeq\mathbb{C}\setminus\{0\}\simeq S^1\times\mathbb{R}$ by conformal transformation. So we may regard the harmonic sphere $v_1$ as a map defined on $S^1\times\mathbb{R}$. For $j\geq 2$, we can define $\zeta_j$ similar to \eqref{59} with $S^1\times\mathbb{R}$ instead of $\mathbb{C}$, i.e., $\zeta_j$ has value $1$ on $S^1\times\mathbb{R}\setminus D_1\circ B_{r_j}(x_j)$
\[\int_{S^1\times[-L,L]}|\nabla(\zeta_j\circ D_1)|^2=\int_{B_{r_j}(x_j)\setminus B_{r_j^2}(x_j)}|\nabla(\zeta_j\circ D_1)|^2<-\frac{2\pi}{\log r},\]
for some chosen $r$ which can be made sufficiently small when the energy of $v_1\circ D_1$ on the annuli $B_{r_j}(x_j)\setminus B_{r_j^2}(x_j)$ is sufficiently small. (See how we choose $\epsilon_3$ \eqref{e_3} in Lemma \ref{unstable}.) For $j\geq 2$, $\zeta_j$ is constructed to \textit{exclude} the bubble map $v_j$ on $v_1$, now we construct the cutoff function $\zeta_1$ for $v_1$. \eqref{68} implies that
\begin{equation}
    \begin{split}
        \int_{S^1\times\mathbb{R}\setminus (T_\tau\setminus\bigcup_jB_{r_j}(x_j)))}|\nabla (v_1\circ D_1)|^2&\\
        =\int_{S^1\times\mathbb{R}\setminus T_\tau}|\nabla(v_1\circ D_1)|^2&+\int_{\bigcup_jB_{r_j}(x_j)}|\nabla(v_1\circ D_1)|^2
        <\epsilon^2/9.
    \end{split}
\end{equation}
So we have 
\begin{equation}\label{72}
\int_{S^1\times\mathbb{R}\setminus T_\tau}|\nabla(v_1\circ D_1)|^2=\int_{D_1\circ\{S^1\times\mathbb{R}\setminus T_\tau\}}|\nabla v_1|^2<\epsilon^2/9.
\end{equation}
We choose $r$ so that $-\frac{2\pi}{\log r^{1/2}}<\varepsilon$ for some given $\varepsilon>0$. Let $\tilde{v}_1=v_1 \circ e^z\circ\Pi_S$, so that  $\tilde{v}_1$ is defined on $S^2\setminus\{$north pole, south pole$\}$. By the removable singularity property of harmonic map we can extend $\tilde{v}$ to be defined on $S^2$. Let $\tilde{r}$ denote the radius of the geodesic ball $\Pi^{-1}_S(B_r(0))\subset S^2$, now we consider the following
\begin{equation}\label{73}
\epsilon_4:=\min\Big\{E(\tilde{v}_1,B_{\rho}(x)),B_{\rho}(x)\subset S^2,\rho\geq\tilde{r}/2\Big\},    
\end{equation}
where $B_\rho(x)$ is the geodesic ball centered at $x$ with radius $\rho$. We know that since 
\[\Pi_S^{-1}\circ (e^z)^{-1}\circ \{S^1\times\mathbb{R}\setminus T_\tau\}=\Pi_S^{-1}\circ (e^z)^{-1}\circ \{S^1\times(-\infty,-L)\cup S^1\times(L,\infty)\}\]
is the two geodesic ball centered at north pole and south pole seperately, we denote them by $B_{\rho_1}(\hat{n})$ and $B_{\rho_2}(\hat{s})$.  If we choose $\epsilon$ so that $\epsilon^2/9<\epsilon_4$, then by \eqref{73} we know that there wouldn't be any geodesic balls with radius larger than $\tilde{r}$ contained in $D_1\circ B_{\rho_1}(\hat{n})$ or $D_1\circ B_{\rho_2}(\hat{s})$. Now we can define $\zeta_j$ which has energy supported in $D_1\circ B_{\rho_1}(\hat{n})$ and $D_1\circ B_{\rho_2}(\hat{s})$, and with energy smaller than the given $\varepsilon$. Thus we can construct $\tilde{X}_{1,i}=(\prod_{j=1}^n\zeta_jX_{1,i})\circ D_1$ and argue similarly as Lemma \ref{unstable}.
\end{proof}
\begin{lemma}\label{remain}
Let $\{(v_0,\mathcal{T}_{\tau_0}),\{v_j\}_{j=1}^n\}$, $\{X_{j,i}\}$, $\epsilon>0$ be given by Lemma \ref{unstable}. For any $u\in W^{1,2}(T_\tau,M)$, if $\epsilon/2<d_B((u,\mathcal{T}_{\tau}),\{(v_0,\mathcal{T}_{\tau_0}),\{v_j\}_{j=1}^n\})<\epsilon$, then there exists a constant $\nu(\epsilon)>0$ such that if 
\begin{equation}
    E\big(\Pi\circ\big(u+\sum_{i=1,j=0}^{k_j,n}s_{j,i}\Tilde{X}_{j,i}\big),T_\tau\big)\leq E(u,T_\tau)+\nu(\epsilon),
\end{equation}
for some $s=(s_{0,1},...,s_{j,k_j},s_{j+1,1},...,s_{n,k_n})\in\bar{B}^k$,
where $\{X_{j,i}\}$ is given by Lemma \ref{unstable}. Then we have
\begin{equation}
    d_B((\Pi\circ\big(u+\sum_{i=1,j=0}^{k_j,n}s_{j,i}\Tilde{X}_{j,i}\big),\mathcal{T}_{\tau}),\{(v_0,\mathcal{T}_{\tau_0}),\{v_j\}_{j=1}^n\})>\nu(\epsilon).
\end{equation}
\end{lemma}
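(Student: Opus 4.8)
The plan is to argue by compactness and contradiction, exploiting the rigidity supplied by Lemma \ref{unstable}. Suppose the conclusion fails. Then for every $\nu>0$ there is a map $u_\nu\in W^{1,2}(\mathcal{T}_{\tau_\nu},M)$ with $\epsilon/2<d_B((u_\nu,\mathcal{T}_{\tau_\nu}),\{(v_0,\mathcal{T}_{\tau_0}),\{v_j\}\})<\epsilon$ and a displacement $s^\nu\in\bar B^k$ such that simultaneously
\[
E\big(\Pi\circ(u_\nu+\textstyle\sum s^\nu_{j,i}\tilde X_{j,i}),\mathcal{T}_{\tau_\nu}\big)\le E(u_\nu,\mathcal{T}_{\tau_\nu})+\nu
\quad\text{and}\quad
d_B\big((\Pi\circ(u_\nu+\textstyle\sum s^\nu_{j,i}\tilde X_{j,i}),\mathcal{T}_{\tau_\nu}),\{(v_0,\mathcal{T}_{\tau_0}),\{v_j\}\}\big)\le\nu.
\]
The second inequality, together with the definition of bubble norm (Definition \ref{bubblenorm}) and the energy identity built into it (compare \eqref{bubble1}--\eqref{bubble3}), forces $E\big(\Pi\circ(u_\nu+\sum s^\nu_{j,i}\tilde X_{j,i}),\mathcal{T}_{\tau_\nu}\big)\to E(v_0,\mathcal{T}_{\tau_0})+\sum_j E(v_j)$ as $\nu\to0$; indeed a map within bubble-distance $\nu$ of the fixed collection has $W^{1,2}$-norm on the body and bubble regions converging to that of the limit maps, and energy loss on the necks tending to zero. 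Combining this with the first inequality yields
\[
\liminf_{\nu\to0} E(u_\nu,\mathcal{T}_{\tau_\nu})\ \ge\ E(v_0,\mathcal{T}_{\tau_0})+\sum_j E(v_j).
\]

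On the other hand I want a matching upper bound for $E(u_\nu,\mathcal{T}_{\tau_\nu})$ that is strict, producing the contradiction. This is where Lemma \ref{unstable} enters: since $d_B((u_\nu,\mathcal{T}_{\tau_\nu}),\{(v_0,\mathcal{T}_{\tau_0}),\{v_j\}\})<\epsilon$, the lemma produces vector fields $\{\tilde X_{j,i}\}$ and the strictly concave function $E_{u_\nu}(\cdot,\mathcal{T}_{\tau_\nu})$ on $\bar B^k$ satisfying \eqref{33} and \eqref{34}, with a unique interior maximum $m(u_\nu)\in B^k_{c_0/\sqrt{10}}$. In particular $E_{u_\nu}(0,\mathcal{T}_{\tau_\nu})=E(u_\nu,\mathcal{T}_{\tau_\nu})$, so by \eqref{34}
\[
E\big(\Pi\circ(u_\nu+\textstyle\sum s^\nu_{j,i}\tilde X_{j,i}),\mathcal{T}_{\tau_\nu}\big)
= E_{u_\nu}(s^\nu,\mathcal{T}_{\tau_\nu})
\le E_{u_\nu}(m(u_\nu),\mathcal{T}_{\tau_\nu})-\tfrac{c_0}{2}|s^\nu-m(u_\nu)|^2,
\]
and similarly $E(u_\nu,\mathcal{T}_{\tau_\nu})\le E_{u_\nu}(m(u_\nu),\mathcal{T}_{\tau_\nu})-\tfrac{c_0}{2}|m(u_\nu)|^2$. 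The first displayed hypothesis then reads $E_{u_\nu}(s^\nu,\mathcal{T}_{\tau_\nu})\le E_{u_\nu}(0,\mathcal{T}_{\tau_\nu})+\nu$, which by strict concavity (using the lower bound $-\tfrac1{c_0}\mathrm{Id}$ on the Hessian and that $0$ lies a definite distance from nothing in particular, but rather comparing $s^\nu$ and $0$ through the maximum) forces $|s^\nu|\le C(\nu)$ with $C(\nu)\to0$; more precisely $\tfrac{c_0}{2}|s^\nu-m(u_\nu)|^2\le \tfrac1{2c_0}|m(u_\nu)|^2+\nu$ is not by itself enough, so instead one uses the quadratic comparison directly along the segment from $0$ to $s^\nu$: concavity gives $E_{u_\nu}(s^\nu)-E_{u_\nu}(0)\ge \langle\nabla E_{u_\nu}(0),s^\nu\rangle - \tfrac1{2c_0}|s^\nu|^2$, and one shows $\langle\nabla E_{u_\nu}(0),s^\nu\rangle$ cannot be too negative because $m(u_\nu)$ is close to $0$, ultimately yielding $|s^\nu|\to0$.

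Once $s^\nu\to0$, the map $\Pi\circ(u_\nu+\sum s^\nu_{j,i}\tilde X_{j,i})$ differs from $u_\nu$ by a $W^{1,2}$-amount tending to zero (the $\tilde X_{j,i}$ have bounded $W^{1,2}$-norm by construction in Lemma \ref{unstable}, so $\|\Pi\circ(u_\nu+\sum s^\nu_{j,i}\tilde X_{j,i})-u_\nu\|_{W^{1,2}}\le C|s^\nu|\to0$). Therefore the bubble-distance of $u_\nu$ itself to the collection is at most the bubble-distance of the perturbed map plus an error going to zero; by Claim \ref{open unstable} (continuity of the bubble-distance constraint under small $W^{1,2}$-perturbations), this gives $d_B((u_\nu,\mathcal{T}_{\tau_\nu}),\{(v_0,\mathcal{T}_{\tau_0}),\{v_j\}\})\to0$, contradicting the standing hypothesis $d_B>\epsilon/2$. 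This contradiction proves the lemma; the constant $\nu(\epsilon)$ is obtained by negating the existential.

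The main obstacle, and the step that needs the most care, is the passage from ``$E_{u_\nu}(s^\nu)\le E_{u_\nu}(0)+\nu$ and $m(u_\nu)$ near $0$'' to ``$|s^\nu|\to0$.'' The quadratic lower and upper bounds in \eqref{34} are centered at the unknown maximum $m(u_\nu)$, not at $0$, so one must carefully track the two inequalities $E_{u_\nu}(m(u_\nu))-\tfrac1{2c_0}|s^\nu-m(u_\nu)|^2\le E_{u_\nu}(s^\nu)$ and $E_{u_\nu}(0)\le E_{u_\nu}(m(u_\nu))-\tfrac{c_0}{2}|m(u_\nu)|^2$, combine them with the hypothesis, and use $|m(u_\nu)|<c_0/\sqrt{10}$ to conclude that $|s^\nu-m(u_\nu)|$, hence $|s^\nu|$, is controlled by a quantity $\to0$ as $\nu\to0$. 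A secondary technical point is verifying that bubble-distance $\le\nu$ really does force the energy to converge to $E(v_0)+\sum E(v_j)$; this is essentially the content of how $d_B$ was designed to encode the energy identity \eqref{energyidentity}, using \eqref{bubble1}, \eqref{bubble2}, \eqref{bubble3} and the triangle inequality in $W^{1,2}$ on each piece of the domain decomposition.
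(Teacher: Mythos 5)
Your overall strategy (contradiction, then compactness) matches the paper's, and your steps extracting the two energy statements from the hypotheses are sound. But there is a genuine gap at the step you yourself flag as the ``main obstacle.'' The quadratic bounds \eqref{34} from Lemma \ref{unstable} cannot, by themselves, convert the hypothesis $E_{u_\nu}(s^\nu)\le E_{u_\nu}(0)+\nu$ into $|s^\nu|\to0$. Strict concavity with maximum at $m(u_\nu)$ only \emph{penalizes} $E$ as $s$ moves away from the maximum; it does not reward it. Concretely, from $D^2E\ge -\tfrac1{c_0}\mathrm{Id}$ and $\nabla E_{u_\nu}(m)=0$ one has $|\nabla E_{u_\nu}(0)|\le\tfrac1{c_0}|m(u_\nu)|$, and then concavity gives
\begin{equation*}
E_{u_\nu}(s^\nu)-E_{u_\nu}(0)\ \ge\ -\tfrac1{c_0}|m(u_\nu)|\,|s^\nu|-\tfrac1{2c_0}|s^\nu|^2,
\end{equation*}
whose right-hand side is nonpositive for \emph{all} $s^\nu\in\bar B^k$. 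Thus the constraint $E_{u_\nu}(s^\nu)-E_{u_\nu}(0)\le\nu$ is vacuous with respect to $|s^\nu|$: a concave function with interior maximum can be within $\nu$ of its value at $0$ at points arbitrarily far from $0$ (indeed the whole point of the deformation argument is that one chooses $s$ far from $m(u_\nu)$ precisely to drop the energy). Similarly, from the other direction you would need an upper bound $E(u_\nu)\le W+o(1)$ (so that $E_{u_\nu}(0)-E_{u_\nu}(s^\nu)\to0$), but the standing hypothesis $d_B((u_\nu,\cdot),\ldots)<\epsilon$ only gives $|E(u_\nu)-W|=O(\epsilon)$ with $\epsilon$ fixed, and even that would only bound $|s^\nu|$ by a fixed constant, not force it to zero.

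What makes the paper's argument close is that it uses the bubble-norm convergence of the \emph{perturbed} map to determine the shape of $u_m$ (not merely its energy). Writing $Y^m_j=u_m\circ(D^m_j)^{-1}-v_j$, the convergence \eqref{80}--\eqref{83} identifies the tangential part of $-Y^m_j$ as $\sum_i s_{j,i}X_{j,i}$ in the limit (\eqref{85}, \eqref{86}), where $s_{j,i}=\lim s^m_{j,i}$. Since $u_m=\Pi\circ(v_j+Y^m_j)$ pointwise, decomposing the energy as in \eqref{87} and using that $\{X_{j,i}\}$ spans a negative-definite subspace for the index form yields $\lim E(u_m)\le E(v_0,\mathcal T_{\tau_0})+\sum_j E(v_j)$, with \emph{equality if and only if} all $s_{j,i}=0$. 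The matching lower bound from the first hypothesis (your step 3) then forces $s_{j,i}=0$, and only at this point does one conclude $d_B((u_m,\cdot),\ldots)\to0$. This identification step, in which the index-negativity of $X_{j,i}$ is invoked at the level of $\Pi\circ(v_j+Y^m_j)$ rather than at the level of $E_{u_\nu}(\cdot)$, is the ingredient your proposal is missing; the concavity estimates \eqref{33}--\eqref{34} are not a substitute for it.
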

\begin{proof}
We argue by contradiction and assume that there's a sequence of maps $\{u_m\}_{m\in\mathbb{N}}$, $u_m:\mathcal{T}_{\tau_m}\to M$, with $\epsilon/2<d_B((u_m,\mathcal{T}_{\tau_m}),\{(v_0,\mathcal{T}_{\tau_0}),\{v_j\}_{j=1}^n\})<\epsilon$,
\begin{equation}\label{78}
    E\big(\Pi\circ\big(u_m+\sum_{i=1,j=0}^{k_j,n}s^m_{j,i}\Tilde{X}^m_{j,i}\big),\mathcal{T}_{\tau_m}\big)\leq E(u,\mathcal{T}_{\tau_m})+\frac{1}{m},
\end{equation}
for some $s^m=(s^m_{0,1},...,s^m_{j,k_j},s^m_{j+1,1},...,s^m_{n,k_n})\in\bar{B}^k$,
where $\{\tilde{X}^m_{j,i}\}$ is given by Lemma \ref{unstable} for each $u_m$, and
\begin{equation}\label{79}
d_B((\Pi\circ\big(u_m+\sum_{i=1,j=0}^{k_j,n}s^m_{j,i}\Tilde{X}^m_{j,i}\big)
,\mathcal{T}_{\tau_m}),\{(v_0,\mathcal{T}_{\tau_0}),\{v_j\}_{j=1}^n\})\to 0,\text{ as }m\to\infty.
\end{equation}
\eqref{79} implies that for each $u_m$ we can find $B_{r_{j,m}}(x_{j,m})\subset \mathcal{T}_{\tau_m}$ and $\{D_0^m\}\subset S^1\times S^1$, $\{D_j^m\}\subset\text{PSL}(2,\mathbb{C})$, such that 
\begin{equation}\label{80}
\int_{\mathcal{T}_{\tau_m}\setminus\bigcup_{j}B_{r_{j,m}}(x_{j,m})}|\nabla(\Pi\circ(u_m+\sum_{i=1}^{k_0}s^m_{0,i}\Tilde{X}^m_{0,i}))-\nabla (v_0\circ D^m_0\circ\iota_{\tau_m,\tau_0})|^2\to 0,
\end{equation}
\begin{equation}\label{81}
\int_{\Omega^m_j}|\nabla(\Pi\circ(u_m+\sum_{i=1}^{k_j}s^m_{j,i}\Tilde{X}^m_{j,i}))-\nabla(v_j\circ D_j^m)|^2\to 0,
\end{equation}
where $\Omega^m_j=B_{r_{j,m}^2}(x_{j,m})\setminus\Big\{\bigcup B_{r_{i,m}}(x_{i,m}),\: B_{r_{i,m}}(x_{i,m})\subset B_{r_{j,m}^2}(x_{j,m})\Big\},$ and
\begin{equation}\label{82}
\int_{\bigcup_jB_{r_{j,m}}(x_{j,m})}|\nabla (v_0\circ D^m_0\circ\iota_{\tau,\tau_0})|^2+\int_{\mathbb{C}\setminus\Omega^m_j}|\nabla(v_j\circ D_j^m)|^2\to 0,
\end{equation}
\begin{equation}\label{83}
\int_{\bigcup_j\{B_{r_{j,m}}(x_{j,m})\setminus B_{r_{j,m}^2}(x_{j,m})\}}|\nabla\Pi\circ\big(u_m+\sum_{i=1,j=0}^{k_j,n}s^m_{j,i}\Tilde{X}^m_{j,i}\big)|^2\to 0,
\end{equation}
as $m\to 0$. By how we construct $\Tilde{X}^m_{j,i}\in C^\infty(\mathcal{T}_{\tau_m},M)$ in Lemma \ref{unstable}, we know \eqref{82} implies that the energy of $\Tilde{X}^m_{j,i}$ tends to $0$ on $\bigcup_j\{B_{r_{j,m}}(x_{j,m})\setminus B_{r_{j,m}^2}(x_{j,m})\}$ as $m\to\infty$. (See part 4 in Claim \ref{unstabletorus}.) Thus \eqref{82} and \eqref{83} imply that as $m\to\infty$ we have
\begin{equation}
\int_{\bigcup_j\{B_{r_{j,m}}(x_{j,m})\setminus B_{r_{j,m}^2}(x_{j,m})\}}|\nabla u_m|^2\to 0.
\end{equation}
Assume that $\lim_{m\to\infty}D_0^m=$Id. For almost all $x\in \mathcal{T}_{\tau_0}$, \eqref{80} implies that
\[\lim_{m\to\infty}\Pi\circ\big(u_m\circ\iota_{\tau_0,\tau_m}(x)+\sum_{i=1}^{k_0}(s^m_{0,i}\Tilde{X}^m_{0,i})\circ\iota_{\tau_0,\tau_m}(x)\big)=v_0(x).\]
\eqref{81} implies that
\[\lim_{m\to\infty}\Pi\circ\big(u_m\circ (D^m_j)^{-1}(x)+\sum_{i=1}^{k_j}(s^m_{j,i}\Tilde{X}^m_{j,i}\circ (D^m_j)^{-1}(x))\big)=v_j(x),\quad j\in\{1,...,n\},\]
for almost all $x\in\mathbb{C}$. We recall that $x-\Pi(x)\in T_{\Pi(x)}^\perp M$, then there exist functions $\sigma_0:\mathcal{T}_{\tau_0}\to T_{v_0}^\perp M$, $\sigma_j:\mathbb{C}\to T_{v_j}^\perp M$,
such that 
\[\lim_{m\to\infty}u_m\circ\iota_{\tau_0,\tau_m}(x)+\sum_{i=1}^{k_0}(s^m_{0,i}\Tilde{X}^m_{0,i})\circ\iota_{\tau_0,\tau_m}(x)=v_0(x)+\sigma_0(x),\] 
and 
\[\lim_{m\to\infty}u_m\circ (D^m_j)^{-1}(x)+\sum_{i=1}^{k_0}(s^m_{j,i}\Tilde{X}^m_{j,i})\circ (D^m_j)^{-1}(x)=v_j(x)+\sigma_j(x).\]
Now we consider $Y^m_0=u_m\circ\iota_{\tau_0,\tau_m}-v_0$ and $Y^m_j=u_m\circ(D^m_j)^{-1}-v_j$, since $d\Pi_{v_0}$ is the orthogonal projection of $\mathbb{R}^N$ onto $T_{v_0}M$, so
\begin{equation}\label{85}
\begin{split}
\lim_{m\to\infty}d\Pi_{v_0}(-Y^m_0)(x)&=\lim_{m\to\infty}d\Pi_{v_0}\big(\sum_{i=1}^{k_0}(s^m_{0,i}\Tilde{X}^m_{0,i})\circ\iota_{\tau_0,\tau_m}(x)-\sigma_0(x)\big)\\
&=\lim_{m\to\infty}d\Pi_{v_0}\big(\sum_{i=1}^{k_0}(s^m_{0,i}\Tilde{X}^m_{0,i})\circ\iota_{\tau_0,\tau_m}(x)\big)\\
&=d\Pi_{v_0}\big(\sum_{i=1}^{k_0}(s_{0,i}X_{0,i})(x)\big),
\end{split}    
\end{equation}
where $s_{0,i}:=\lim_{m\to\infty}s^m_{0,i}$, and similarly we have 
\begin{equation}\label{86}
\lim_{m\to\infty}d\Pi_{v_j}(-Y^m_j)(x)=d\Pi_{v_j}\big(\sum_{i=1}^{k_j}(s_{j,i}X_{j,i})(x)\big),   
\end{equation}
where $s_{j,i}:=\lim_{m\to\infty}s^m_{j,i}.$
\begin{equation}\label{87}
\begin{split}
\lim_{m\to\infty}E(u_m)&=\lim_{m\to\infty}\Big\{\sum_{j=1}^n\int_{\Omega_j^m}|\nabla(\Pi\circ(v_j\circ (D^m_j)^{-1
}+(u_m-v_j\circ D^m_j))|^2\\
&+\int_{{\mathcal{T}_{\tau_m}\setminus\bigcup_{j}B_{r_{j,m}}(x_{j,m})}}|\nabla(\Pi\circ(v_0\circ\iota_{\tau_m,\tau_0}+(u_m-v_0\circ\iota_{\tau_m,\tau_0}))|^2\\
&+\int_{\bigcup_j\{B_{r_{j,m}}(x_{j,m})\setminus B_{r_{j,m}^2}(x_{j,m})\}}|\nabla u_m|^2\Big\}\\
&=\lim_{m\to\infty}\Big\{\sum_{j=1}^n\int_{D^m_j\circ\Omega_j^m}|\nabla(\Pi\circ(v_j+Y^m_j)|^2\\
&+\int_{{\mathcal{T}_{\tau_m}\setminus\bigcup_{j}B_{r_{j,m}}(x_{j,m})}}|\nabla(\Pi\circ(v_0+Y^m_0)\circ\iota_{\tau_m,\tau_0}|^2\Big\}\\
&\leq E(\Pi\circ(v_0+Y^m_0),\mathcal{T}_{\tau_0})+\sum_{j=1}^n E(\Pi\circ(v_j+Y^m_j),\mathbb{C})\\
&\leq E(v_0,\mathcal{T}_{\tau_0})+\sum_{j=1}^n E(v_j,\mathbb{C}).
\end{split}
\end{equation}
Since $\{X_{j,i}\}_{i=1}^{k_j}$ is the subspace on $\Gamma(v_j^{-1}TM)$ on which the index form is negative definite, so the last inequality of \eqref{87} follows from \eqref{85} and \eqref{86}, and the equality holds if and only if $s_{0,i}=0$ and $s_{j,i}=0$. On the other hand, by \eqref{78} and \eqref{79} we have the following
\begin{equation}
\begin{split}
E(v_0,\mathcal{T}_{\tau_0})+\sum_{i=1}^nE(v_i,\mathbb{C})=\lim_{m\to\infty}E(\Pi\circ\big(u_m+&\sum_{i=1,j=0}^{k_j,n}s^m_{j,i}\Tilde{X}^m_{j,i}\big),\mathcal{T}_{\tau_m})\\
&\leq\lim_{m\to\infty} E(u_m,\mathcal{T}_{\tau_m}),    
\end{split}
\end{equation}
which forces $s_{0,i}=0$ and $s_{j,i}=0$. Thus we know that \[\lim_{m\to\infty}d_B((u_m,\mathcal{T}_{\tau_m}),\{(v_0,\mathcal{T}_{\tau_0}),\{v_j\}_{j=1}^n\})=0,\]
which is the desired contradiction.
\end{proof}
\section{Min-Max Minimal Torus}
Let $\tilde{\Lambda}=\{(\gamma(t),\mathcal{T}_{\tau(t)});\gamma(t)\in C^0([0,1],C^0\cap W^{1,2}(\mathcal{T}_{\tau(t)},M))\}$, $\tau(t)\in C^0([0,1],\mathcal{M})$, and $\Lambda=\{\gamma(t)\in C^0([0,1],C^0\cap W^{1,2}(T_0,M))\}$. We assume $\gamma(0)$ and $\gamma(1)$ are constant mappings or map the torus to some circles in $M$. And $\tau(0)$, $\tau(1)=\sqrt{-1}$, if mappings on the endpoints are constant mappings and not restrained if not.

For the area functional, we only need to consider the variational problem in the space $\Lambda$, since changing domain metrics will not change the area. But for the energy functional, different conformal structures may lead to different energy, so we have to consider the variational problem in the space $\tilde{\Lambda}$. Fix a homotopically nontrivial path $\beta\in\Lambda$. Let $[\beta]$ be the homotopy class of $\beta$ in $\Lambda$. Since the path $(\beta(t), \mathcal{T}_{\tau_\beta(t)})\in\tilde{\Lambda}$ may have different domains $\mathcal{T}_{\tau_\beta(t)}$ the homotopy equivalence $\alpha\sim\beta$ of  $\alpha(t):\mathcal{T}_{\tau_\alpha(t)}\to M$ and $\beta(t):\mathcal{T}_{\tau_\beta(t)}\to M$ is defined as follows. We can identify $\mathcal{T}_{\tau_\alpha(t)}$ and $\mathcal{T}_{\tau_\beta(t)}$ to $T_0$ by $\iota_{\tau_{\alpha}(t)}$ and 
$\iota_{\tau_{\beta}(t)}$, then we can view $\alpha(t)$ and $\beta(t)$ as mappings defined on the same domain $T_0$ and hence define their homotopy equivalence.
\begin{definition}[Width]\label{width}
Let 
\[W=\inf_{\gamma\in[\beta]}\max_{t\in[0,1]}\text{Area}(\gamma(t)),\] and 
\[W_E=\inf\Big\{\max_{t\in[0,1]}E(\gamma,\mathcal{T}_{\tau(t)});(\gamma(t),\mathcal{T}_{\tau(t)})\in[\beta(t),\mathcal{T}_{\tau_\beta(t)}]\Big\}.\]
\end{definition}
\begin{claim}\cite[Remark 3.2]{minimaltorus}
$W=W_E$.
\end{claim}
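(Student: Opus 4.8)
The plan is to prove $W\le W_E$ and $W_E\le W$ separately; the first is elementary and the second is the substantive direction.

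For $W\le W_E$: given any admissible path $(\gamma(t),\mathcal{T}_{\tau(t)})$ for $W_E$, identifying each $\mathcal{T}_{\tau(t)}$ with $T_0$ via $\iota_{\tau(t)}$ produces, by the definition of homotopy equivalence in $\tilde{\Lambda}$, a path in $[\beta]$ with the same area profile, since the area of a map does not depend on the chosen domain metric. Since $\text{Area}(u)\le E(u,\mathcal{T}_\tau)$ pointwise, with equality exactly when $u$ is weakly conformal, one gets $\max_t\text{Area}(\gamma(t))\le\max_t E(\gamma(t),\mathcal{T}_{\tau(t)})$, and taking the infimum over competitors yields $W\le W_E$.

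For $W_E\le W$: fix $\varepsilon>0$ and a sweepout $\gamma\in[\beta]$ with $\max_t\text{Area}(\gamma(t))\le W+\varepsilon$. After a smoothing step (discussed below) we may assume $t\mapsto\gamma(t)\in C^\infty(T_0,M)$, still with $\max_t\text{Area}(\gamma(t))\le W+2\varepsilon$ and with the prescribed endpoint maps. For small $\delta>0$ the symmetric tensor $h_{t,\delta}:=\gamma(t)^*g+\delta\,g_0$, with $g_0$ the flat metric of $T_0=\mathcal{T}_{\sqrt{-1}}$, is a $C^\infty$ Riemannian metric on $T_0$ depending continuously on $t$. By the uniformization proposition recalled above there is a unique mark $\tau_\delta(t)\in\mathbb{H}$ and a conformal diffeomorphism $\phi_{t,\delta}\colon\mathcal{T}_{\tau_\delta(t)}\to(T_0,h_{t,\delta})$ isotopic to $(\iota_{\tau_\delta(t)})^{-1}$; pushing $\tau_\delta(t)$ into the fundamental domain $\mathcal{M}$ gives a continuous path there. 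Put $\gamma'_\delta(t):=\gamma(t)\circ\phi_{t,\delta}\colon\mathcal{T}_{\tau_\delta(t)}\to M$. Conformal invariance of the energy gives $E(\gamma'_\delta(t),\mathcal{T}_{\tau_\delta(t)})=E(\gamma(t),(T_0,h_{t,\delta}))$, and diagonalising $\gamma(t)^*g$ with respect to $g_0$, with eigenvalues $0\le\lambda_2\le\lambda_1$, yields the pointwise bound
\begin{equation*}
E(\gamma(t),(T_0,h_{t,\delta}))\le\int_{T_0}\sqrt{(\lambda_1+\delta)(\lambda_2+\delta)}\,dV_{g_0}\le\text{Area}(\gamma(t))+C\sqrt{\delta},
\end{equation*}
where $C$ depends only on $\text{Vol}(T_0)$ and $\sup_t E(\gamma(t),g_0)<\infty$. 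One then checks admissibility of $(\gamma'_\delta(t),\mathcal{T}_{\tau_\delta(t)})$ for $W_E$: at $t=0,1$ the map $\gamma(t)$ is constant (or maps onto a circle), so $\gamma(t)^*g=0$, hence $h_{t,\delta}=\delta g_0$, $\tau_\delta(0)=\tau_\delta(1)=\sqrt{-1}$, and $\gamma'_\delta$ has the required boundary behaviour; and since $\phi_{t,\delta}$ is isotopic to $(\iota_{\tau_\delta(t)})^{-1}$, identifying $\mathcal{T}_{\tau_\delta(t)}$ with $T_0$ via $\iota_{\tau_\delta(t)}$ turns $\gamma'_\delta(t)$ into $\gamma(t)$ precomposed with a diffeomorphism of $T_0$ isotopic to the identity, so the path lies in $[\beta(t),\mathcal{T}_{\tau_\beta(t)}]$. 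Hence $W_E\le\max_t E(\gamma'_\delta(t),\mathcal{T}_{\tau_\delta(t)})\le W+2\varepsilon+C\sqrt{\delta}$, and letting $\delta\to0$ and then $\varepsilon\to0$ gives $W_E\le W$.

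The hardest part will be the interplay between regularity and continuity in $t$. First, the uniformization proposition must be applied to the family $h_{t,\delta}$ so that the mark $\tau_\delta(t)$ depends continuously on $t$, which rests on continuous dependence of the mark on the $C^1$ metric; this is precisely why $\gamma$ is first smoothed and why the regularizing term $\delta g_0$ is inserted, the latter also disposing of the locus where $\gamma(t)$ fails to be an immersion, at an energy cost of only $O(\sqrt{\delta})$ as the estimate above shows. Second, the initial smoothing of a $W^{1,2}$ sweepout must not inflate the maximal area: mollification controls the $W^{1,2}$ norm and hence the energy, but a priori not the area, so one either smooths at a suitable scale and invokes $\text{Area}\le E$ on the smoothed maps together with a density argument, or one restricts from the outset to smooth sweepouts, which realize the same width $W$. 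These are the two points to be supplied beyond the bookkeeping above.
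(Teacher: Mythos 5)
The paper itself gives no proof of this claim; it is quoted from Zhou, \cite[Remark 3.2]{minimaltorus}, where the argument is essentially what you have written. Your two-sided argument is the standard one and it is correct: the easy direction uses $\text{Area}\le E$ and conformal invariance of area under the identification $\iota_{\tau(t)}$, and the substantive direction pulls back the target metric, adds a conformal regularizer $\delta g_0$, applies the uniformization Proposition to the resulting family $h_{t,\delta}$ to obtain $(\tau_\delta(t),\phi_{t,\delta})$ depending continuously on $t$, and then bounds the conformal energy by $\text{Area}+O(\sqrt{\delta})$ via the eigenvalue estimate you give, which I checked and which does follow from $\sqrt{(\lambda_1+\delta)(\lambda_2+\delta)}\le\sqrt{\lambda_1\lambda_2}+\sqrt{\delta(\lambda_1+\lambda_2)}+\delta$ together with Cauchy--Schwarz.

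One of the two ``points to be supplied'' that you flag is actually a non-issue, and you are over-hedging there: area \emph{is} continuous under strong $W^{1,2}$ convergence. Writing $F(a,b)=|a\wedge b|$, one has $|F(a,b)-F(a',b')|\le |a-a'|\,|b|+|a'|\,|b-b'|$, so
\[
\Big|\text{Area}(u_k)-\text{Area}(u)\Big|\le \|\nabla u_k-\nabla u\|_{L^2}\big(\|\nabla u_k\|_{L^2}+\|\nabla u\|_{L^2}\big)\to 0 .
\]
Hence mollification (followed by nearest-point projection $\Pi$ to return to $M$) at a fixed small scale controls the area uniformly in $t$ by compactness of the image of $[0,1]$ in $W^{1,2}$, and no separate density argument is needed. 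The genuinely load-bearing point you identify --- continuous dependence of the mark $\tau$ and the uniformizing map on the $C^1$ metric --- is indeed the one that does require care, and it is precisely where the insertion of $\delta g_0$ earns its keep by keeping $h_{t,\delta}$ uniformly positive definite; this is consistent with Zhou's treatment.
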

\begin{theorem}\cite[Theorem 1.1]{minimaltorus}
For any homotopically nontrivial path $\beta\in\Lambda$, if $W>0$, there exists a sequence $(\rho_l(t),\mathcal{T}_{\tau_l(t)})\in[\beta]$, with $\max_{t\in[0,1]}E(\rho_l(t),\mathcal{T}_{\tau_l(t)})\to W$, and $\forall\epsilon>0$, there exist $N>0$ and $\delta>0$ such that if $l>N$, then for any $t\in(0,1)$ satisfying
\[E(\rho_l(t),\mathcal{T}_{\tau_l(t)})>W-\delta,\]
there are possibly a conformal harmonic torus $v_0:\mathcal{T}_{\tau_0}\to M$ and finitely many harmonic spheres $v_j:S^2\to M$ such that
\[d_V(\rho_l(t),\cup_j v_j)\leq\epsilon,\]
where $d_V$ means varifold distance. (See \cite{minimaltorus}.)
\end{theorem}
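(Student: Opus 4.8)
The statement is Zhou's min--max existence theorem \cite{minimaltorus}, so the plan is to reproduce the structure of that argument, which adapts the harmonic--replacement min--max scheme of Colding and Minicozzi \cite{CD} to sweepouts by tori with varying conformal structure, and then to read the output through Theorem \ref{torusconvergence}. First I would fix a homotopically nontrivial $\beta\in\Lambda$ and, using $W=W_E$, choose any sequence of sweepouts $\gamma_l\in[\beta]$ with $\max_{t}E(\gamma_l(t),\mathcal{T}_{\tau_{\gamma_l}(t)})\to W$. The goal is to upgrade $\{\gamma_l\}$ to a \emph{tightened} sequence $\{\rho_l\}\subset[\beta]$, still minimizing, whose almost-maximal slices are both almost harmonic and almost conformal, so that they fall under the hypotheses of Theorem \ref{torusconvergence}.

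The tightening has two ingredients, run together and kept continuous in $t$. The first is harmonic replacement on a grid: cover $\mathcal{T}_\tau$ by a grid of balls of a fixed small radius with controlled energy, on a concentric fraction (say $\frac{1}{8}B$) of each ball replace the map by the energy-minimizing harmonic map with the same boundary values, do this on the even subgrid, then the odd subgrid, and iterate. Each replacement is energy non-increasing, and if the map is $W^{1,2}$-far from its harmonic replacement on some admissible collection of balls the energy drops by a definite amount; uniqueness of small-energy minimizers and standard interpolation keep the output continuous in $t$, in $[\beta]$, with endpoints fixed. The second ingredient is, for each slice, to move the conformal parameter $\tau$ in the direction decreasing energy, towards the (unique up to the modular group) energy-minimizing conformal structure in the given Teichm\"uller class; this minimizer either stays in a compact part of $\mathcal{M}$ or escapes to its boundary, which is the degenerating case. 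After finitely many rounds one obtains $\rho_l\in[\beta]$ with $\max_t E(\rho_l(t))\to W$ such that every slice with $E(\rho_l(t))>W-\delta_l$ for some $\delta_l\to 0$ satisfies: uniformly bounded energy; $E-\mathrm{Area}\to 0$, because near-optimality of $\tau$ bounds the $L^1$-norm of the Hopf differential; and the almost harmonic-replacement property --- i.e. exactly conditions (1)--(4) of Theorem \ref{torusconvergence}.

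Finally I would argue by contradiction. If the conclusion failed there would be $\epsilon>0$ and slices $\rho_{l}(t_{l})$ with near-maximal energy but $d_V(\rho_{l}(t_{l}),\cup_j v_j)>\epsilon$ for every conformal harmonic torus plus finite collection of harmonic spheres. Since $\{\rho_{l}(t_{l})\}$ satisfies the hypotheses of Theorem \ref{torusconvergence}, up to a subsequence it bubble converges --- either to a conformal harmonic torus $v_0:\mathcal{T}_{\tau_\infty}\to M$ together with finitely many harmonic spheres (Case 1), or, if $\tau_l$ degenerates, to finitely many harmonic spheres with degenerate body map (Case 2) --- and by the energy identity \eqref{energyidentity} bubble convergence forces varifold convergence of $\rho_l(t_l)$ to the union of these harmonic maps, contradicting $d_V>\epsilon$. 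In the form used later in this paper one then records, via Definition \ref{bubblenorm}, that $d_B((\rho_l(t_l),\mathcal{T}_{\tau_l(t_l)}),\{(v_0,\mathcal{T}_{\tau_0}),\{v_j\}\})\to 0$ as well. The main obstacle is making the joint harmonic-replacement/conformal-flow tightening genuinely continuous in $t$ while preserving the homotopy class $[\beta]$, and controlling the conformal structure along the way --- in particular proving the almost-conformality estimate $E-\mathrm{Area}\to 0$ and handling cleanly the degeneration of $\tau_l$ towards the boundary of $\mathcal{M}$; these are exactly the points where the torus case departs from the sphere case of \cite{CD}.
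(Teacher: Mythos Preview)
The paper does not prove this theorem at all: it is stated as a direct citation of \cite[Theorem 1.1]{minimaltorus} and no argument is given. So there is no ``paper's own proof'' to compare your proposal against; the theorem is imported wholesale from Zhou's work and used as a black box in Section 4.

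That said, your sketch is a reasonable outline of how Zhou's argument actually proceeds in \cite{minimaltorus}: a Colding--Minicozzi style tightening by harmonic replacement, combined with an adjustment of the conformal structure to drive $E-\mathrm{Area}\to 0$, followed by the compactness/bubbling result (which is Theorem \ref{torusconvergence} here) and a contradiction in varifold distance. You have also correctly identified the genuine technical issues --- continuity in $t$ of the joint tightening, preservation of the homotopy class, and the handling of conformal degeneration --- which are precisely the places where \cite{minimaltorus} goes beyond \cite{CD}. For the purposes of this paper, however, none of this needs to be reproduced: the author simply invokes the result.
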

\begin{definition}[Minimizing sequence]
We call $\{(\gamma_l(t),\mathcal{T}_{\tau(t)})\}_{l\in\mathbb{N}}\in\tilde{\Lambda}$ a minimizing sequence if
\[\lim_{l\to\infty}\max_{t\in[0,1]}E(\gamma_l(t),\mathcal{T}_{\tau(t)})=W.\]
\end{definition}
\begin{theorem}\label{minmaxtorus}
For any minimizing sequence $\{(\gamma_l(t),\mathcal{T}_{\tau(t)})\}_{l\in\mathbb{N}}$. There exist $\{i_j\}\to\infty$ and $t_{i_j}\in[0,1]$ such that one of the following case holds:
\begin{enumerate}
    \item there exist a conformal harmonic torus $v_0:\mathcal{T}_{\tau_0}\to M$ and finitely many harmonic spheres $v_j:S^2\to M$ such that
    \[d_B((\gamma_{i_j}(t_{i_j}),\mathcal{T}_{\tau_{i_j}(t)}),\{(v_0,\mathcal{T}_{\tau_0},\{v_j\})\})\to 0,\]
    \item there exist finitely many harmonic spheres $v_j:S^2\to M$ such that
    \[d_B((\gamma_{i_j}(t_{i_j}),\mathcal{T}_{\tau_{i_j}(t)}),\{v_j\})\to 0.\]
\end{enumerate}
\end{theorem}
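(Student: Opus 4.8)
The plan is to deduce Theorem~\ref{minmaxtorus} by assembling pieces that are already in place: the min-max construction of \cite{minimaltorus} --- in particular the conformal-structure optimization and the Colding--Minicozzi harmonic-replacement pull-tight that lie behind Zhou's existence theorem --- the bubble-tree compactness Theorem~\ref{torusconvergence}, and the observation recorded right after Definition~\ref{bubblenorm} that, for a sequence of maps, bubble-tree convergence to a limiting collection is exactly the statement that the bubble distance $d_B$ to that collection tends to $0$. The whole content of the proof is the verification that near-maximal slices of a suitably prepared minimizing sequence satisfy hypotheses (1)--(4) of Theorem~\ref{torusconvergence}.

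First I would reduce to a prepared minimizing sequence. Given an arbitrary minimizing sequence $\{(\gamma_l,\mathcal{T}_{\tau_l})\}\subset\tilde{\Lambda}$, so that $\max_t E(\gamma_l(t),\mathcal{T}_{\tau_l(t)})\to W=W_E$, one performs the two standard modifications of \cite{minimaltorus}, each of which keeps the path continuous in $t$, fixed at the endpoints, and in the class $[\beta]$, and keeps it a minimizing sequence. (a) At each time $t$ replace the domain conformal structure by one nearly realizing $\inf_\tau E(\gamma_l(t),\mathcal{T}_\tau)$; since that infimum equals $\mathrm{Area}(\gamma_l(t))$ --- exactly for immersions and up to an arbitrarily small error in general, which is the content of the identity $W=W_E$ --- the modified slices satisfy $E-\mathrm{Area}<\epsilon$ wherever the energy exceeds $W-\delta$. (b) Run the harmonic-replacement interpolation slice by slice; since on each ball the drop in energy equals the squared $W^{1,2}$-norm of the change, a slice near the maximum moves by less than $\epsilon$ in $W^{1,2}$ (its conformal structure unchanged), and afterwards every slice with energy $>W-\delta$ is almost harmonic in the precise sense of condition~(4) of Theorem~\ref{torusconvergence}. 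By Claim~\ref{open unstable} the bubble-convergence conclusion for the modified slices will transfer to the original ones. The only delicate point here is keeping the conformal-structure optimization continuous in $t$ and compatible with the interpolation; this is carried out in \cite{minimaltorus}.

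Next I would extract slices and pass to the limit. For each $l$ choose $t_l$ with $E(\gamma_l(t_l),\mathcal{T}_{\tau_l(t_l)})>\max_t E(\gamma_l(t),\mathcal{T}_{\tau_l(t)})-1/l$, so that this energy tends to $W$. After the preparation the maps $u_l:=\gamma_l(t_l)$ on their domains satisfy hypotheses (1)--(4) of Theorem~\ref{torusconvergence}: (1) and (2) are immediate, (3) is step~(a), (4) is step~(b). Applying Theorem~\ref{torusconvergence} along a subsequence $\{i_j\}$ gives the dichotomy according to whether the conformal structures $\tau_{i_j}(t_{i_j})$, normalized into $\mathcal{M}$ by $\mathrm{PSL}(2,\mathbb{Z})$, converge or diverge: in the first case one is in Case~1, with a conformal harmonic torus $v_0:\mathcal{T}_{\tau_0}\to M$ and finitely many harmonic spheres $v_j:S^2\to M$; in the second, in Case~2, with finitely many harmonic spheres and degenerate body map. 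In both cases the conclusion of Theorem~\ref{torusconvergence} is bubble-tree convergence of $u_{i_j}$, hence, by the observation after Definition~\ref{bubblenorm} together with Claim~\ref{open unstable}, $d_B((\gamma_{i_j}(t_{i_j}),\mathcal{T}_{\tau_{i_j}(t_{i_j})}),\cdot)\to 0$, which is the assertion.

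The step I expect to be the main obstacle is the preparation: upgrading a general minimizing sequence to one whose near-maximal slices obey conditions (3) and (4) of Theorem~\ref{torusconvergence} while it remains a continuous path in $\tilde{\Lambda}$ with the prescribed endpoint behaviour and homotopy class. Optimizing the domain conformal structure has no counterpart in the sphere case, where the conformal structure is rigid, and interleaving that optimization with the harmonic-replacement interpolation without breaking continuity in $t$ is the genuine technical point; for it we rely on \cite{minimaltorus}. Granting the preparation, Theorem~\ref{minmaxtorus} follows from the subsequence extraction above together with Theorem~\ref{torusconvergence}, Definition~\ref{bubblenorm}, and Claim~\ref{open unstable}.
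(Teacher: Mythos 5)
The paper does not write out a proof of Theorem~\ref{minmaxtorus}: Section~4 is explicitly described as ``stat[ing] Zhou's existence result \cite{minimaltorus} in the form of bubble convergence,'' and the theorem is presented as a reformulation of \cite[Theorem~1.1]{minimaltorus} through the compactness Theorem~\ref{torusconvergence}. Your proposal reconstructs exactly the intended chain of reasoning — pull-tight a given minimizing sequence via the conformal-structure optimization and harmonic-replacement interpolation of \cite{minimaltorus}, verify hypotheses (1)--(4) of Theorem~\ref{torusconvergence} for near-maximal slices, invoke the dichotomy of Theorem~\ref{torusconvergence} to obtain bubble convergence and hence $d_B\to 0$ via the remark following Definition~\ref{bubblenorm}, and transfer back to the unprepared sequence with Claim~\ref{open unstable} — and correctly flags the genuinely delicate point (interleaving the conformal-structure optimization with the replacement while preserving continuity in $t$) as being carried out in \cite{minimaltorus}; this matches the approach the paper intends.
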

\section{Deformation Theorem}
The metric $g$ is called \textit{bumpy} if there's no prime compact parametrized minimal surface $f:\Sigma\to M$ with branch points, nontransversal crossings or intersections, and
non-trivial Jacobi field other than the Jacobi field generated by the group of conformal automorphisms of $\Sigma.$ (See Definition \ref{bumpy metric}.) It's proved by Moore \cite{JD} that bumpy metrics are generic in the Baire sense. Let $\{(v_0,\mathcal{T}_{\tau_0}),\{v_j\}_{j=1}^n\}$, $\{X_{j,i}\}$ be given as in section 3. Moreover, we assume that the sum of energy is equivalent to width (see Definition \ref{width}), i.e., $E(v_0,\mathcal{T}_{\tau_0})+\sum_{j=1}^nE(v_j,\mathbb{C})=W$. Suppose $(M,g)$ is a bumpy metric and the given harmonic torus $(v_0,\mathcal{T}_{\tau_0})$ is conformal so its image is minimal. Since $g$ is bumpy, $v_j$ has only transversal intersections, so there exists a positive constant $C\leq 1$ such that $\int_{\Omega}|\nabla v_j-\nabla v_i|^2>C^2\int_{\Omega}|\nabla v_j|^2$. Now we consider $\gamma(t)\in C^0([0,1],C^0\cap W^{1,2}(\mathcal{T}_{\tau(t)},M))$, and $\tau(t)\in C^0([0,1],\mathcal{M})$. Let \[\mathcal{I}_\epsilon=\{t\in[0,1], d_B((\gamma(t),\mathcal{T}_{\tau(t)}),\{(v_0,\mathcal{T}_{\tau_0}),\{v_j\}_{j=1}^n\})<\epsilon\},\]
where $\epsilon>0$ is the constant given in Lemma \ref{unstable}. We assume that $\epsilon>0$ is smaller than the energy lower bound for a nontrivial harmonic map in $M$.
\begin{claim}\label{continuous}
For $t\in\mathcal{I}_{C\epsilon/10}$, where $C$ is a constant which depends on $\{(v_0,\mathcal{T}_{\tau_0}),\{v_j\}_{j=1}^n\}$ and $\epsilon>0$ is the constant given in Lemma \ref{unstable}. We can choose $\{X_{j,i}(t)\}\subset C^{\infty}(\mathcal{T}_{\tau(t)},\mathbb{R}^N)$ such that it varies continuously with respect to $t$, and the variation of $\gamma(t)$ defined as the following
\[\Pi\circ\big(\gamma(t)+\sum_{i=1,j=0}^{k_j,n}s_{j,i}X_{j,i}(t)\big),\quad s=(s_{0,1},...,s_{j,k_j},s_{j+1,1},...,s_{n,k_n})\in\bar{B}^k,\]
satisfies \eqref{33} and \eqref{34}.
\end{claim}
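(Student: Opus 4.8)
The plan is a cover--patch argument on $[0,1]$ resting on the observation that the vector fields produced by Lemma~\ref{unstable} depend on the map only through a finite amount of \emph{bubble data}. For a map $(u,\mathcal{T}_\tau)$ with $d_B<\epsilon$, the fields $\tilde{X}_{j,i}$ of Lemma~\ref{unstable} are, by \eqref{newvf} and its spherical analogue, the \emph{fixed} index fields $X_{j,i}$ of the fixed limit collection multiplied by the Choi--Schoen cutoffs \eqref{388} attached to the balls $B_{r_j}(x_j)$ of Definition~\ref{bubblenorm} and precomposed with the automorphisms $D_0\in S^1\times S^1$, $D_j\in\text{PSL}(2,\mathbb{C})$ and with $\iota_{\tau,\tau_0}$. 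Pulling everything back to the fixed torus $T_0$ by $\iota_\tau$, the field depends continuously on $\tau$ and on the point $\mathfrak{d}=(\{x_j\},\{\log r_j\},D_0,\{D_j\})$ of a fixed finite-dimensional manifold $\mathcal{P}$ (a product of copies of $T_0$, $\mathbb{R}$, $S^1\times S^1$ and $\text{PSL}(2,\mathbb{C})$, with factors fixed by the bubble-tree combinatorics). Moreover the proof of Lemma~\ref{unstable} deduces \eqref{33} and \eqref{34} \emph{only} from the inequalities \eqref{bubble1}--\eqref{bubble3} holding with a definite margin; call $\mathfrak{d}$ \emph{valid for $\gamma(t)$} when this holds, and write $\mathcal{V}_t\subset\mathcal{P}$ for the set of such $\mathfrak{d}$, which is nonempty for $t\in\mathcal{I}_{C\epsilon/10}$. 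It therefore suffices to produce a continuous selection $t\mapsto\mathfrak{d}(t)\in\mathcal{V}_t$ on $\mathcal{I}_{C\epsilon/10}$ and to let $X_{j,i}(t)$ be the field attached to $\mathfrak{d}(t)$ on $\mathcal{T}_{\tau(t)}$.

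For the selection: given $t_0\in\mathcal{I}_{C\epsilon/10}$, pick $\mathfrak{d}(t_0)\in\mathcal{V}_{t_0}$ via Lemma~\ref{unstable}; by Remark~\ref{unstable remark} (equivalently Claim~\ref{open unstable}) and the continuity of $t\mapsto(\gamma(t),\tau(t))$ there is an open interval $I_{t_0}\ni t_0$ in $[0,1]$ on which the same datum (pulled along $\iota_{\tau(t_0),\tau(t)}$) remains valid. Since $\mathcal{I}_{C\epsilon/10}$ is relatively open in $[0,1]$ --- apply Claim~\ref{open unstable} with $C\epsilon/10$ in place of $\epsilon$ together with the continuity of $\gamma$ and $\tau$ --- pass to a locally finite subcover $\{I_{t_k}\}$ with subordinate partition of unity $\{\phi_k\}$ and let $\mathfrak{d}(t)$ be the $\{\phi_k(t)\}$-weighted Riemannian center of mass in $\mathcal{P}$ of the data $\mathfrak{d}(t_k)$ with $t\in I_{t_k}$. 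This is continuous in $t$; it is well defined, and still valid for $\gamma(t)$, as soon as the finitely many $\mathfrak{d}(t_k)$ active at $t$ lie in one small geodesically convex ball of $\mathcal{P}$ (using continuity of \eqref{bubble1}--\eqref{bubble3} in $\mathfrak{d}$ and, if needed, shrinking the cover).

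That clustering of the data is the crux, and I expect it to be the main obstacle: one must show that, after shrinking $\epsilon$ in terms of the limit collection, $\text{diam}_{\mathcal{P}}\mathcal{V}_t\to 0$ as $\epsilon\to 0$, i.e.\ that valid bubble data for a map of small bubble norm is essentially unique. This is where the constant $C$ enters --- it is the transversality/separation constant of the bumpy metric for the $v_j$, which forces the matching in \eqref{bubble1}--\eqref{bubble3} to label the bubbles unambiguously; then if $\mathfrak{d},\mathfrak{d}'\in\mathcal{V}_t$, combining \eqref{bubble1}--\eqref{bubble3} for the two choices makes $v_0\circ D_0$ close to $v_0\circ D_0'$ in $W^{1,2}$ on their common good region and $v_j\circ D_j$ close to $v_j\circ D_j'$, and, since each $v_j$ is nonconstant harmonic, pinning down each bubble's center and scale through the energy-concentration radius $r_i(x)$ used in the proof of Theorem~\ref{torusconvergence} forces $\text{dist}_{\mathcal{P}}(\mathfrak{d},\mathfrak{d}')=O(\epsilon)$. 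Granting this, $\mathfrak{d}(t)$ is a continuous section of $\mathcal{V}_t$, the fields $X_{j,i}(t)$ vary continuously in $t$, and, because $\mathfrak{d}(t)$ is a single valid datum, the proof of Lemma~\ref{unstable} applies as it stands to give \eqref{33} and \eqref{34} for $\Pi\circ\big(\gamma(t)+\sum_{i=1,j=0}^{k_j,n}s_{j,i}X_{j,i}(t)\big)$. The diameter bound on $\mathcal{V}_t$ is the only genuinely new ingredient; everything else is the soft machinery above.
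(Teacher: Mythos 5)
Your outline follows essentially the same route as the paper: both exploit the openness of the condition $d_B<\epsilon$ (Claim~\ref{open unstable}/Remark~\ref{unstable remark}) to cover $\mathcal{I}_{C\epsilon/10}$ by intervals on which a single bubble datum $\mathfrak{d}=(\{x_j\},\{r_j\},D_0,\{D_j\})$ stays valid, and both reduce the continuity problem to the observation that any two valid data at the same time $t$ must be close --- which is exactly where the transversality constant $C$ from the bumpy metric, i.e.\ the inequality $\int_\Omega|\nabla v_j-\nabla v_i|^2>C^2\int_\Omega|\nabla v_j|^2$, enters. You correctly flag this clustering as the crux. The only real difference is the patching device: the paper arranges the cover so that each interval meets at most two others, and on each double overlap interpolates $\Omega(t)$, $D_0(t)$, $D(t)$ explicitly, while you take a locally finite refinement, a subordinate partition of unity, and a local Riemannian center of mass in the parameter manifold $\mathcal{P}$; both mechanisms are standard and require the identical clustering input. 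The one step you leave as a sketch, namely that the good regions $\Omega_a,\Omega_b$ of two valid data actually overlap, is proved in the paper by combining the standing assumption that $\epsilon$ is below the energy gap for nontrivial harmonic maps in $M$ (so $\int_{\Omega_a}|\nabla(v_1\circ D_a)|^2>9\epsilon/10$) with the $C$-separation to rule out $\Omega_a\cap\Omega_b=\emptyset$; once this is spelled out, your argument is complete and matches the paper's.
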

\begin{proof} 
Lemma \ref{unstable} implies that for each $t\in\mathcal{I}_\epsilon$ there exist $\{X_{j,i}(t)\}\subset C^{\infty}(\mathcal{T}_{\tau(t)},\mathbb{R}^N)$ such that \eqref{33} and \eqref{34} hold. By how we construct $\{X_{j,i}(t)\}$ we know that it depends on the pairwise disjoint domains $\Omega_j(t)$ and conformal automorphisms $D_j(t)$. If we can find domains $\Omega_j(t)$ and conformal automorphisms $D_j(t)$ that vary continuous with respect to $t$, and satisfy the inequalities \eqref{bubble1}, \eqref{bubble2} and \eqref{bubble3}. Then we can construct $\{X_{j,i}(t)\}$ which varies continuously with respect to $t$.

For each $a\in\mathcal{I}$, there exists $\delta(a)>0$ such that $(a-\delta(a),a+\delta(a))\subset\mathcal{I}$ by Claim \ref{open unstable}. We can cover $\mathcal{I}$ with countably many such open intervals. Moreover, after discarding some of the intervals, we can arrange that each $t\in\mathcal{I}$ is in at least one open interval, each interval intersects at most two others, and the two intervals intersecting the same interval do not intersect each other.

We assume that there's only one bubble map, i.e., $\{(v_0,\mathcal{T}_{\tau_0}),\{v_j\}_{j=1}^n\}=\{(v_0,\mathcal{T}_{\tau_0}),v_1\}$ and we choose some  $t\in(a-\delta(a),a+\delta(a))\cap(b-\delta(b),b+\delta(b))$. There exist $\Omega_a,\Omega_b\subset \mathcal{T}_{\tau(t)}$, $D_a$, $D_b\in$PSL$(2,\mathbb{C})$, and $D_0^a, D_0^b\in S^1\times S^1$ such that \eqref{bubble1}, \eqref{bubble2} and \eqref{bubble3} hold. It's easy to see that $\Omega_a\cap\Omega_b\neq\emptyset$. If not, then
\[
\begin{split}
\Big(\int_{\Omega_a}|\nabla(v_1\circ D_a)-\nabla v_0|^2\Big)^{1/2}&<\Big(\int_{\Omega_a}|\nabla(v_1\circ D_a)-\nabla \gamma(t)|^2\Big)^{1/2}\\
&+\Big(\int_{\Omega_a}|\nabla\gamma(t)-v_0|^2\Big)^{1/2}\\
&<C\epsilon/5.
\end{split}
\]
However, since we assume that $\epsilon$ is smaller than the lower bound of energy for nontrivial harmonic map, we have
\[\Big(\int_{\Omega_a}|\nabla (v_1\circ D_a)|^2\Big)^{1/2}>
\Big(\int_{\mathbb{C}}|\nabla (v_1\circ D_a)|^2\Big)^{1/2}-\Big(\int_{\mathbb{C}\setminus\Omega_a}|\nabla (v_1\circ D_a)|^2\Big)^{1/2}>9\epsilon/10.
\]
Which contradicts that $\int_{\Omega_a}|\nabla v_1\circ D_a-\nabla v_0|^2>C^2\int_{\Omega_a}|\nabla v_a\circ D_a|^2$.
Next, we observe that
\begin{equation*}
    \begin{split}
\Big(\int_{\mathbb{C}}|\nabla(v_1\circ D_a)&-\nabla(v_1\circ D_b)|^2\Big)^{1/2}\leq\Big(\int_{\mathbb{C}\setminus\Omega_a}|\nabla(v_1\circ D_a)|^2\Big)^{1/2}\\
&+\Big(\int_{\mathbb{C}\setminus\Omega_b}|\nabla(v_1\circ D_b)|^2\Big)^{1/2}+\Big(\int_{\Omega_a\cap\Omega_b}|\nabla(v_1\circ D_a)-\nabla(v_1\circ D_b)|^2\Big)^{1/2}\\
&+\Big(\int_{\Omega_a\setminus\Omega_b}|\nabla(v_1\circ D_a)|^2\Big)^{1/2}+\Big(\int_{\Omega_b\setminus\Omega_a}|\nabla(v_1\circ D_b)|^2\Big)^{1/2}.
\end{split}
\end{equation*}
Since $\Big(\int_{\mathbb{C}\setminus\Omega_a}|\nabla(v_1\circ D_a)|^2\Big)^{1/2}+\Big(\int_{\mathbb{C}\setminus\Omega_b}|\nabla(v_1\circ D_b)|^2\Big)^{1/2}$ is bounded by $\epsilon/5$ by \eqref{bubble2}, and 
\[
\begin{split}
\Big(\int_{\Omega_a\cap\Omega_b}|\nabla(v_1\circ D_a)-\nabla(v_1\circ D_b)|^2\Big)^{1/2}&\\
<\Big(\int_{\Omega_a}|\nabla\gamma(t)-\nabla(v_1\circ D_a)|^2\Big)^{1/2}&+\Big(\int_{\Omega_b}|\nabla\gamma(t)-\nabla(v_1\circ D_b)|^2\Big)^{1/2}<\epsilon/5.
\end{split}\]
We're left to deal with $\Big(\int_{\Omega_a\setminus\Omega_b}|\nabla(v_1\circ D_a)|^2\Big)^{1/2}+\Big(\int_{\Omega_b\setminus\Omega_a}|\nabla(v_1\circ D_b)|^2\Big)^{1/2}$. Since 
\begin{equation*}
    \begin{split}
\Big(\int_{\Omega_a\setminus\Omega_b}|\nabla(v_1\circ D_a)-\nabla v_0|^2\Big)^{1/2}&<\Big(\int_{\Omega_a\setminus\Omega_b}|\nabla(v_1\circ D_a)-\nabla \gamma(t)|^2\Big)^{1/2}\\
&+\Big(\int_{\mathbb{C}\setminus\Omega_b}|\nabla\gamma(t)-v_0|^2\Big)^{1/2}\\
&<\epsilon/5,   
    \end{split}
\end{equation*}
which implies $\Big(\int_{\Omega_a\setminus\Omega_b}|\nabla(v_1\circ D_a)|^2\Big)^{1/2}$ is bounded by $\epsilon/5$. Similarly, we can bound $\Big(\int_{\Omega_b\setminus\Omega_a}|\nabla(v_1\circ D_b)|^2\Big)^{1/2}<\epsilon/5$.  So we can bound $\Big(\int_{\mathbb{C}}|\nabla(v_1\circ D_a)-\nabla(v_1\circ D_b)|^2\Big)^{1/2}<\epsilon$, which implies that $\Omega_a$ is close to $\Omega_b$ in PSL$(2,\mathbb{C})$, so we can define a continuous function $D:[a,b]\to$ PSL$(2,\mathbb{C})$ with $D(a)=D_a$ and $D(b)=D_b$, such that
\[\int_{\mathbb{C}\setminus\Omega_a\cap\Omega_b}|\nabla(v_1\circ D(t))|^2<\epsilon,\quad\forall t\in[a,b],\]
and define a continuous function $\Omega:[a,b]\to\mathbb{C}$, $\Omega_a\cap\Omega_b\subseteq\Omega(t)\subseteq\Omega_a\cup\Omega_b$, with $\Omega(a)=\Omega_a$ and $\Omega(b)=\Omega_b$, such that for $\gamma(t)$ the inequality \eqref{bubble2} holds with $\Omega(t)$ and $D(t)\in\text{PSL}(2,\mathbb{C})$. Similarly, we can construct a continuous function $D_0(t)\in S^1\times S^1$ such that the inequality  \eqref{bubble1} holds with $D_0(t)$ and $\mathcal{T}_{\tau(t)}$. Then we can construct $\{X_{j,i}(t)\}$ which varies continuously with respect to $t$.
\end{proof}
For a given $\gamma(t)\in C^0([0,1],C^0\cap W^{1,2}(\mathcal{T}_{\tau(t)},M))$, and $\tau(t)\in C^0([0,1],\mathcal{M})$. Let $I\subset\mathcal{I}_{C\epsilon/10}$ be a closed interval. We can construct $\{X_{j,i}(t)\}\subset C^{\infty}(\mathcal{T}_{\tau(t)},\mathbb{R}^N)$ such that it varies continuously with respect to $t\in I$ By Claim \ref{continuous}. The variation of $\gamma(t)$ defined as the following
\[\Pi\circ\big(\gamma(t)+\sum_{i=1,j=0}^{k_j,n}s_{j,i}X_{j,i}(t)\big),\quad s=(s_{0,1},...,s_{j,k_j},s_{j+1,1},...,s_{n,k_n})\in\bar{B}^k,\]
and we define the smooth function $E_{\gamma(t)}(\cdot,\mathcal{T}_{\tau(t)}):\Bar{B}^k\to[0,\infty)$ by
\[E_{\gamma(t)}(s,\mathcal{T}_{\tau(t)})=E\big(\Pi\circ\big(\gamma(t)+\sum_{i=1,j=0}^{k_j,n}s_{j,i}X_{j,i}(t)\big),\mathcal{T}_{\tau(t)}\big),\]
where $s=(s_{0,1},...,s_{j,k_j},s_{j+1,1},...,s_{n,k_n})\in\bar{B}^k$. By Claim \ref{continuous} we know that $E_{\gamma(t)}(\cdot,\mathcal{T}_{\tau(t)})$ satisfies \eqref{33} and \eqref{34}. Moreover, the maximum $m(\gamma(t))\in B_{c_0/\sqrt{10}}^k$ of $E_{\gamma(t)}(\cdot,\mathcal{T}_{\tau(t)})$ varies continuously with respect to $t$.
We consider the one-parameter flow
\begin{align*}
\{\phi_t(\cdot,x)\}_{x\geq 0}&\in\text{Diff}(\bar{B}^k)\\
 \phi_t(\cdot,\cdot):\bar{B}^k&\times [0,\infty)\to \bar{B}^k,   
\end{align*}
generated by the vector field:
\begin{equation}
s\mapsto -(1-|s|^2)\nabla E_{\gamma(t)}(s,\mathcal{T}_{\tau(t)}),\:s\in\bar{B}^k. 
\end{equation}
\begin{claim}\label{flow}		
For all $\kappa<\frac{1}{4}$, there is $T$ depending on $\big\{\{(v_0,\mathcal{T}_{\tau_0}),\{v_j\}_{j=1}^n\}, \{X_{j,i}\},\kappa,\epsilon\big\}$ so that for any $t\in I\subset\mathcal{I}_{C\epsilon/10}$, and $\chi\in \bar{B}^k$ with $|\chi-m(\gamma(t))|\geq\kappa$ we have:
\begin{equation}\label{decrease flow}
E_{\gamma(t)}(\phi_t(\chi,T),\mathcal{T}_{\tau(t)})<E_{\gamma(t)}(0,\mathcal{T}_{\tau(t)})-\frac{c_0}{10}.
\end{equation}
\end{claim}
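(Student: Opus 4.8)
The plan is to prove \eqref{decrease flow} as a purely finite-dimensional statement about the modified negative gradient flow of the single smooth function $F_t:=E_{\gamma(t)}(\cdot,\mathcal{T}_{\tau(t)})$ on $\bar B^k$, using only the uniform concavity estimate \eqref{33}, the quadratic pinching \eqref{34}, and the fact that the maximum $m:=m(\gamma(t))$ of $F_t$ lies in $B^k_{c_0/\sqrt{10}}$. The crucial observation is that by Lemma \ref{unstable} and Claim \ref{continuous} all three of these hold with \emph{the same} constant $c_0$ for every $t\in I\subset\mathcal I_{C\epsilon/10}$, so any constant produced below is automatically uniform in $t$ and in $\chi$. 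Write $s(x):=\phi_t(\chi,x)$. Since the generating field $s\mapsto-(1-|s|^2)\nabla F_t(s)$ vanishes on $\partial\bar B^k$, the flow exists for all $x\ge 0$ and preserves $\bar B^k$, and along it
\[
\frac{d}{dx}F_t(s(x))=-(1-|s(x)|^2)\,\bigl|\nabla F_t(s(x))\bigr|^2\le 0 ,
\]
so $x\mapsto F_t(s(x))$ is nonincreasing.

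Next I would extract two consequences of uniform concavity. Because $\nabla F_t(m)=0$ and $-\tfrac1{c_0}\mathrm{Id}\le D^2F_t\le-c_0\,\mathrm{Id}$, integrating the Hessian along the segment from $m$ to $s$ gives $\langle\nabla F_t(s),s-m\rangle\le-c_0|s-m|^2$. Hence
\[
\frac{d}{dx}|s(x)-m|^2=-2(1-|s(x)|^2)\,\langle\nabla F_t(s(x)),s(x)-m\rangle\ \ge\ 2c_0\,(1-|s(x)|^2)\,|s(x)-m|^2\ \ge\ 0 ,
\]
so $x\mapsto|s(x)-m|$ is nondecreasing; in particular $|s(x)-m|\ge\kappa$ for all $x\ge 0$.

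Now fix the threshold $\theta:=\sqrt{2/5}$. Since $c_0<1$ we have $\theta+c_0/\sqrt{10}<\theta+1/\sqrt{10}$, and one computes $(\theta+1/\sqrt{10})^2=9/10$; thus as long as $|s(x)-m|\le\theta$ we have $|s(x)|\le|s(x)-m|+|m|<\theta+c_0/\sqrt{10}$ and therefore $1-|s(x)|^2\ge 1/10$. Feeding this into the previous display gives $\tfrac{d}{dx}|s(x)-m|^2\ge\tfrac{c_0}{5}|s(x)-m|^2$ while $|s(x)-m|\le\theta$, hence $|s(x)-m|^2\ge\kappa^2e^{c_0x/5}$ on that range; since the right side eventually exceeds $\theta^2=2/5$ and $|s(x)-m|$ is nondecreasing, we get $|s(T)-m|\ge\theta$ for $T:=\tfrac5{c_0}\log\tfrac{2}{5\kappa^2}$, a positive number since $\kappa<1/4$, and this $T$ works uniformly. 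Finally, the upper bound in \eqref{34} gives $F_t(s(T))\le F_t(m)-\tfrac{c_0}{2}\theta^2=F_t(m)-\tfrac{c_0}{5}$, while the lower bound in \eqref{34} together with $|m|<c_0/\sqrt{10}$ gives $F_t(0)\ge F_t(m)-\tfrac1{2c_0}|m|^2>F_t(m)-\tfrac{c_0}{20}$, and subtracting,
\[
F_t(0)-F_t(s(T))\ >\ \frac{c_0}{5}-\frac{c_0}{20}=\frac{3c_0}{20}>\frac{c_0}{10},
\]
which is \eqref{decrease flow}.

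Everything here is a routine ODE comparison; the one step that needs genuine care is the degeneration of the cutoff factor $1-|s|^2$ near $\partial\bar B^k$: one must be sure the trajectory realizes the required energy drop of $c_0/10$ while still a definite distance from the boundary, where the flow speed has not yet become small. This is precisely why $\theta$ must be taken strictly below $1-c_0/\sqrt{10}$, which is possible because $c_0<1$ and the maximum $m$ lies in the small ball $B^k_{c_0/\sqrt{10}}$; the choice $\theta=\sqrt{2/5}$ records a clean admissible value. All constants depend only on $\{(v_0,\mathcal T_{\tau_0}),\{v_j\}_{j=1}^n\}$, $\{X_{j,i}\}$, $\kappa$, $\epsilon$ through $c_0$.
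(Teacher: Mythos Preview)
Your argument is correct, and it differs from the paper's in a useful way. The paper proves the claim by contradiction and compactness: assuming no uniform $T$ exists, it extracts along a sequence $(t_l,s_l)$ a subsequential limit $(t',s)\in I\times\bar B^k$ (using that $I$ is a closed interval), observes that the trajectory $\phi_{t'}(s,\cdot)$ must tend to $\partial\bar B^k$, and then derives a contradiction from \eqref{34} evaluated at boundary points, where $|v-m(\gamma(t'))|>2/3$. Your proof is instead direct and quantitative: the uniform concavity \eqref{33} forces $|s(x)-m|$ to be nondecreasing and to grow at least exponentially while $|s(x)|$ stays below $\theta+c_0/\sqrt{10}$, which produces an explicit $T=\tfrac{5}{c_0}\log\tfrac{2}{5\kappa^2}$ after which $|s(T)-m|\ge\sqrt{2/5}$, and then \eqref{34} closes the estimate. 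Your approach yields an effective constant depending only on $c_0$ and $\kappa$, and in particular does not use the compactness of $I$ at all; the paper's soft argument is slightly shorter but gives no explicit $T$.
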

\begin{proof}
By $m(\gamma(t))\in B^k_\frac{c_0}{\sqrt{10}}(0)$ and \eqref{34}, we have:
\begin{equation}\label{c}
\sup_{s\in\bar{B}^k}E_{\gamma(t)}(s,\mathcal{T}_{\tau(t)})=E_{\gamma(t)}(m(\gamma(t)),\mathcal{T}_{\tau(t)})\leq E_{\gamma(t)}(0,\mathcal{T}_{\tau(t)})+\frac{c_0}{20},\:\forall t\in I.
\end{equation}
So, to prove (\ref{decrease flow}), it suffices to show the existence of $T$ such that
\[|\chi-m(\gamma(t))|\geq\kappa\implies E_{\gamma(t)}(\phi_t(\chi,T),\mathcal{T}_{\tau(t)})<\sup_{s\in\bar{B}^k}E_{\gamma(t)}(s,\mathcal{T}_{\tau(t)})-\frac{c_0}{5}.\]
We argue by contradiction and assume that there exists a constant  $\frac{1}{4}>\kappa>0$, a sequence $\{t_l\}_{l\in\mathbb{N}}\subset I$, and $\{s_l\}_{l\in\mathbb{N}}\subset\bar{B}^k$ with $|s_l-m(\gamma(t_l))|\geq\kappa$ such that
\begin{equation}\label{abc}
E_{\gamma(t_l)}(\phi_{t_l}(s_l,l),\mathcal{T}_{\tau(t_l)})\geq E_{\gamma(t_l)}(0,\mathcal{T}_{\tau(t_l)})-\frac{c}{10}.
\end{equation}
Combining (\ref{abc}) with (\ref{c}) we have $E_{\gamma(t_l)}(\phi_{t_l}(s_l,l),\mathcal{T}_{\tau(t_l)})\geq E_{\gamma(t_l)}(m(\gamma(t_l)),\mathcal{T}_{\tau(t_l)})-\frac{c_0}{5}$. Since $\phi_{t}(\cdot,\cdot)$ is an energy decreasing flow, we have
\[E_{\gamma(t_l)}(\phi_{t_l}(s_l,x),\mathcal{T}_{\tau(t_l)})\geq E_{\gamma(t_l)}(\phi_{t_l}(s_l,l),\mathcal{T}_{\tau(t_l)})\geq E_{\gamma(t_l)}(m_j(t_l),\mathcal{T}_{\tau(t_l)})-\frac{c_0}{5},\:\forall 0\leq x\leq l.\]
Since both $I$ and $\bar{B}^k$ are compact, we obtain subsequential limits $t'\in I$ and $s\in\bar{B}^k$ with
\begin{equation}\label{88}
E_{\gamma(t')}(\phi_{t'}(s,x),\mathcal{T}_{\tau(t')})\geq \sup_{|v|\leq 1}E_{\gamma(t')}(v,\mathcal{T}_{\tau(t')})-\frac{c_0}{5},\quad\forall x\geq 0.
\end{equation}
Since $\gamma(t)\mapsto m(\gamma(t))$ is a continuous map, $|s_l-m(\gamma(t_l))|\geq\kappa$ implies $|s-m(\gamma(t'))|\geq\kappa$. Thus we have $\lim_{x\to\infty}|\phi_{t'}(s,x)|=1$ and thus we deduce from the equation (\ref{88}) that
\begin{equation}
\sup_{|v|=1}E_{\gamma(t')}(v,\mathcal{T}_{\tau(t')})\geq\sup_{|v|\leq 1}E_{\gamma(t')}(v,\mathcal{T}_{\tau(t')})-\frac{c_0}{5}.
\end{equation}
On the other hand,  $m(\gamma(t'))\in B^k_\frac{c_0}{\sqrt{10}}(0)$ implies $|v-m(\gamma(t'))|>2/3$ for all $v\in\bar{B}^k$ with $|v|=1$. Hence, we have
\[\sup_{|v|=1}E_{\gamma(t')}(v,\mathcal{T}_{\tau(t')})\leq\sup_{|v|\leq 1}E_{\gamma(t')}(v,\mathcal{T}_{\tau(t')})-\frac{c_0}{2}\Big(\frac{2}{3}\Big)^2<\sup_{|v|\leq 1}E_{\gamma(t')}(v,\mathcal{T}_{\tau(t')})-\frac{c_0}{5},\]
			which gives us the desired contradiction.
		\end{proof}

\begin{theorem}[Deformation Theorem]\label{deformation}
Let $(M,g)$, $\{(v_0,\mathcal{T}_{\tau_0}),\{v_j\}\}$ and $\{X_{j,i}\}$ be given as above. If $\sum_{j=0}^n\text{Index}(v_j)=k\geq 2,$ then for a given minimizing sequence $\{(\gamma_l,\mathcal{T}_{\tau_l(t)})\}_{l\in\mathbb{N}}\subset C^0([0,1],C^0\cap W^{1,2}(\mathcal{T}_{\tau_l(t)},M))$, and $\tau_l(t)\in C^0([0,1],\mathcal{M})$. There exists a sequence $\{(\gamma'_l,\mathcal{T}_{\tau_l(t)})\}_{l\in\mathbb{N}}\subset C^0([0,1],C^0\cap W^{1,2}(\mathcal{T}_{\tau_l(t)},M))$, $l_0\in\mathbb{N}$ and $\bar{\epsilon}>0$ such that
\begin{enumerate}
    \item $(\gamma'_l,\mathcal{T}_{\tau_l(t)})$ is homotopic to $(\gamma'_l,\mathcal{T}_{\tau_l(t)})$,
    \item 
    $d_B((\gamma'_l(t),\mathcal{T}_{\tau_l(t)}),\{(v_0,\mathcal{T}_{\tau_0}),\{v_j\}\})\geq\bar{\epsilon}$ for all $l\geq l_0,$ $t\in[0,1]$
\end{enumerate}
\end{theorem}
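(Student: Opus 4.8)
The plan is to leave $(\gamma_l,\mathcal{T}_{\tau_l(t)})$ unchanged where it is already far from the configuration and, where it is close, to push it toward $\partial\bar B^k$ so that its energy there drops a definite amount below $W$; the hypothesis $k\geq 2$ is exactly what allows this push to be made continuously in $t$. We argue in the case of Theorem \ref{torusconvergence}(1), the degenerate case being identical with Lemma \ref{unstable1} in place of Lemma \ref{unstable}. Abbreviate $d_B(\gamma_l(t)):=d_B((\gamma_l(t),\mathcal{T}_{\tau_l(t)}),\{(v_0,\mathcal{T}_{\tau_0}),\{v_j\}\})$, fix a constant $\bar\epsilon_0\in(0,C\epsilon/10)$, and put
\[\mathcal{I}^l=\{t:d_B(\gamma_l(t))<C\epsilon/10\},\qquad K_l=\overline{\{t:d_B(\gamma_l(t))<\bar\epsilon_0\}}.\]
By Claim \ref{open unstable} the function $t\mapsto d_B(\gamma_l(t))$ is upper semicontinuous, so $\mathcal{I}^l$ is open and $K_l$ is a compact subset of $\mathcal{I}^l$; since $\gamma_l(0)$ and $\gamma_l(1)$ have energy well below $W$, also $0,1\notin\mathcal{I}^l$. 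By Claim \ref{continuous}, over $\mathcal{I}^l$ we may choose $\{X^l_{j,i}(t)\}\subset C^\infty(\mathcal{T}_{\tau_l(t)},\mathbb{R}^N)$ varying continuously in $t$ so that $E_{\gamma_l(t)}(\cdot,\mathcal{T}_{\tau_l(t)}):\bar B^k\to[0,\infty)$ satisfies \eqref{33}, \eqref{34} and has a unique maximum $m(\gamma_l(t))\in B^k_{c_0/\sqrt{10}}$, depending continuously on $t$.

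For $t\in\mathcal{I}^l$ set $S_t=\{s\in\bar B^k: E_{\gamma_l(t)}(s,\mathcal{T}_{\tau_l(t)})\leq E_{\gamma_l(t)}(0,\mathcal{T}_{\tau_l(t)})\}$. By strict concavity the complement of $S_t$ in $\bar B^k$ is an open convex set containing $m(\gamma_l(t))$; using \eqref{34} and $|m(\gamma_l(t))|<c_0/\sqrt{10}<1/3$ one checks this set is compactly contained in the open ball, so $0\in S_t$ and $\partial\bar B^k\subset S_t$. \emph{This is the point where $k\geq 2$ is used}: when $k\geq 2$ the set $S_t$ is connected, hence $0$ and $\partial\bar B^k$ lie in the same path-component of $S_t$, and since the $S_t$ vary continuously in $t$ one may choose a continuous map $\sigma_l:[0,1]\to\bar B^k$ with $\sigma_l\equiv 0$ on $[0,1]\setminus\mathcal{I}^l$, $\sigma_l(t)\in\partial\bar B^k$ for $t\in K_l$, and $\sigma_l(t)\in S_t$ for every $t\in\mathcal{I}^l$. (For $k=1$, $S_t$ has two components and no such selection exists, consistent with the index bound being one.) Define
\[\gamma'_l(t)=\Pi\circ\Big(\gamma_l(t)+\sum_{i=1,j=0}^{k_j,n}(\sigma_l(t))_{j,i}\,X^l_{j,i}(t)\Big).\]
Since $\sigma_l$ vanishes near $t=0,1$, $\gamma'_l$ agrees with $\gamma_l$ at the endpoints, and $(r,t)\mapsto\Pi\circ(\gamma_l(t)+r\sum_{i,j}(\sigma_l(t))_{j,i}X^l_{j,i}(t))$, $r\in[0,1]$, is a homotopy from $(\gamma_l,\mathcal{T}_{\tau_l(t)})$ to $(\gamma'_l,\mathcal{T}_{\tau_l(t)})$ fixing $t=0,1$, which gives (1); moreover $\sigma_l(t)\in S_t$ gives $E(\gamma'_l(t),\mathcal{T}_{\tau_l(t)})\leq E(\gamma_l(t),\mathcal{T}_{\tau_l(t)})$ for all $t$.

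For (2), fix $l_0$ with $\max_t E(\gamma_l(t),\mathcal{T}_{\tau_l(t)})<W+c_0/20$ for $l\geq l_0$, and estimate $d_B(\gamma'_l(t))$ in three regimes. If $t\notin\mathcal{I}^l$ then $\gamma'_l(t)=\gamma_l(t)$, so $d_B(\gamma'_l(t))\geq C\epsilon/10$. If $t\in K_l$ then $|\sigma_l(t)-m(\gamma_l(t))|>2/3$, so by \eqref{34} and $E_{\gamma_l(t)}(m(\gamma_l(t)))<E(\gamma_l(t))+c_0/20<W+c_0/10$,
\[E(\gamma'_l(t),\mathcal{T}_{\tau_l(t)})=E_{\gamma_l(t)}(\sigma_l(t),\mathcal{T}_{\tau_l(t)})\leq E_{\gamma_l(t)}(m(\gamma_l(t)),\mathcal{T}_{\tau_l(t)})-\frac{2c_0}{9}<W-\frac{c_0}{10};\]
since $d_B(w)<\delta$ forces $E(w)\geq E(v_0,\mathcal{T}_{\tau_0})+\sum_jE(v_j)-\rho(\delta)=W-\rho(\delta)$ with $\rho(\delta)\to0$ — this follows from \eqref{bubble1}--\eqref{bubble3} summed over the pairwise disjoint pieces $T_\tau\setminus\bigcup_jB_{r_j}(x_j)$ and $\Omega_j$, together with $E(v_0,\mathcal{T}_{\tau_0})+\sum_jE(v_j)=W$ — there is $\bar\epsilon_1>0$, depending only on $c_0$ and the configuration, with $d_B(\gamma'_l(t))\geq\bar\epsilon_1$. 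Finally, if $t\in\mathcal{I}^l\setminus K_l$ then $d_B(\gamma_l(t))\geq\bar\epsilon_0$ while $E(\gamma'_l(t),\mathcal{T}_{\tau_l(t)})\leq E(\gamma_l(t),\mathcal{T}_{\tau_l(t)})$; the compactness argument of Lemma \ref{remain} — whose proof only uses a positive lower bound on $d_B(\gamma_l(t))$ and the negative definiteness of the index form on $\mathrm{span}\{X_{j,i}\}$ — yields a constant $\nu_0=\nu_0(\bar\epsilon_0)>0$ with $d_B(\gamma'_l(t))\geq\nu_0$. Taking $\bar\epsilon=\min\{C\epsilon/10,\bar\epsilon_1,\nu_0\}$ and $l_0$ as above gives (2).

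The main obstacle is the construction of $\sigma_l$: it must vanish off $\mathcal{I}^l$ (so the result is a genuine homotopy with fixed endpoints), reach $\partial\bar B^k$ on $K_l$ (so the energy drops a definite amount there), and stay inside the moving sublevel sets $S_t$ over the buffer $\mathcal{I}^l\setminus K_l$ (so Lemma \ref{remain} applies) — and it is exactly the connectedness of $S_t$, equivalently $k\geq 2$, that makes these requirements compatible. The remaining technical points are verifying the continuous selection with constants uniform in $l$, and transcribing the argument to the degenerate case governed by Lemma \ref{unstable1}.
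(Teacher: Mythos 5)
Your three–regime strategy (leave the path alone where it is far from the configuration; push to the boundary of $\bar B^k$ where it is very close; interpolate in a buffer and invoke Lemma \ref{remain} there) is the same as the paper's, and your energy bookkeeping via \eqref{34} in the $K_l$ regime, the direct use of the bubble‑norm inequalities to say $d_B$ small forces energy near $W$, and the appeal to Lemma \ref{remain} in the buffer are all sound. Where you diverge is in the mechanism of the deformation: the paper first constructs a small continuous homotopy $H'_l$ landing in $B^k_{1/2^l}(0)$ that stays a fixed distance $\kappa_l$ away from the moving maximum $m(\gamma_l(t))$, then pushes with the gradient flow $\phi_t^l$ for a uniform time $T_l$ given by Claim \ref{flow}, with the cutoff $c_l$ tapering both the initial perturbation and the flow time near $\partial\mathcal{I}_{l,\epsilon}$. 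You instead assert a single continuous selection $\sigma_l:[0,1]\to\bar B^k$ valued in the sublevel sets $S_t$, hitting $\partial\bar B^k$ on $K_l$ and vanishing off $\mathcal{I}^l$.

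That assertion is the gap. The sets $S_t=\bar B^k\setminus\{E_{\gamma_l(t)}(\cdot)>E_{\gamma_l(t)}(0)\}$ are, as you correctly check, the complements in $\bar B^k$ of open convex sets compactly contained in the interior, hence connected for $k\geq 2$; but they are not contractible (for $k=2$ each $S_t$ is an annulus, and in general $S_t$ deformation retracts onto $S^{k-1}$), so there is no off‑the‑shelf selection theorem to invoke, and ``since the $S_t$ vary continuously one may choose $\sigma_l$'' is not a proof. The actual obstruction is precisely that the path in $\bar B^k$ must dodge the moving point $m(\gamma_l(t))$ while interpolating between $0$ and $\partial\bar B^k$, and this is exactly the content of the paper's two ingredients: the choice of $H'_l$ (which uses $k\geq 2$ to avoid the continuous curve $t\mapsto m(\gamma_l(t))$) and the decreasing flow of Claim \ref{flow} (which carries the image toward the boundary uniformly on $I$ while staying inside the sublevel sets, since the flow only decreases energy). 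You have correctly located where $k\geq2$ enters, but you have not built the selection; to close the gap you should either reproduce the paper's $H'_l$ + flow construction, or give an explicit hands‑on homotopy in $\bigcup_t\{t\}\times S_t$ with the required boundary behavior. A minor additional remark: your appeal to Lemma \ref{remain} over the window $\bar\epsilon_0\le d_B<C\epsilon/10$ rather than the lemma's stated $(\epsilon/2,\epsilon)$ requires the observation you make that its contradiction argument only needs a positive lower bound on $d_B$; the paper glosses the same point in its Case 2, so this is acceptable, but it deserves the one sentence you gave it.
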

\begin{proof}
Let \[\mathcal{I}_{l,\epsilon}=\{t\in[0,1], d_B((\gamma_l(t),\mathcal{T}_{\tau_l(t)}),\{(v_0,\mathcal{T}_{\tau_0}),\{v_j\}_{j=1}^n\})<C\epsilon/10\},\]
where the constant $C$ is given in Claim \ref{continuous}. According to Claim \ref{continuous}, for $t\in\mathcal{I}_{l,\epsilon}$, we can choose $\{X^l_{j,i}(t)\}\subset C^{\infty}(\mathcal{T}_{\tau_l(t)},\mathbb{R}^N)$ such that it varies continuously with respect to $t$. We define the variation of $\gamma_l(t)$ as the following
\[\Pi\circ\big(\gamma_l(t)+\sum_{i=1,j=0}^{k_j,n}s_{j,i}X^l_{j,i}(t)\big),\quad s=(s_{0,1},...,s_{j,k_j},s_{j+1,1},...,s_{n,k_n})\in\bar{B}^k,\]
and we define the smooth function $E_{\gamma_l(t)}(\cdot,\mathcal{T}_{\tau_l(t)}):\Bar{B}^k\to[0,\infty)$ by
\[E_{\gamma_l(t)}(s,\mathcal{T}_{\tau_l(t)})=E\big(\Pi\circ\big(\gamma_l(t)+\sum_{i=1,j=0}^{k_j,n}s_{j,i}X^l_{j,i}(t)\big),\mathcal{T}_{\tau_l(t)}\big),\]
where $s=(s_{0,1},...,s_{j,k_j},s_{j+1,1},...,s_{n,k_n})\in\bar{B}^k$. By Claim \ref{continuous} we know that $E_{\gamma_l(t)}(\cdot,\mathcal{T}_{\tau_l(t)})$ satisfies \eqref{33} and \eqref{34}. Moreover, the maximum $m(\gamma_l(t))\in B_{c_0/\sqrt{10}}^k$ of $E_{\gamma_l(t)}(\cdot,\mathcal{T}_{\tau_l(t)})$ varies continuously with respect to $t$.
We define a continuous homotopy: $$H'_{l}:\mathcal{I}_{l,\epsilon}\times[0,1]\longrightarrow B^k_{1/2^l}(0),$$
so that
$$H'_l(t,0)=0,\:\text{and}\:\inf_{t\in\mathcal{I}_{l,\epsilon}}|H'_l(t,1)-m(\gamma_l(t))|\geq\kappa_l>0.$$
We are able to define $H_j'$ due to the assumption $\sum_{j=0}^n \text{Index}(v_j)=k\geq 2$. So we can choose a continuous path in $B^k_{1/2^j}(0)$ away from the curve of $m(\gamma_l(t))$. By Claim \ref{flow}, for each $l\in\mathbb{N}$, there exists $T_l\in[0,\infty)$ for $t\in\mathcal{I}_{l,\epsilon}$ such that:
$$E_{\gamma_l(t)}\big(\phi_t^l(H_l'(t,1),T_l),\mathcal{T}_{\tau_l(t)}\big)<E_{\gamma_l(t)}\big(0,\mathcal{T}_{\tau_l(t)}\big)-\frac{c_0}{10},$$
where $\phi_t^l(\cdot,\cdot)$ is the one-parameter flow
\begin{align*}
\{\phi_t^l(\cdot,x)\}_{x\geq 0}&\in\text{Diff}(\bar{B}^k)\\
 \phi_t^l(\cdot,\cdot):\bar{B}^k&\times [0,\infty)\to \bar{B}^k,   
\end{align*}
generated by the vector field:
\begin{equation}
s\mapsto -(1-|s|^2)\nabla E_{\gamma_l(t)}(s,\mathcal{T}_{\tau_l(t)}),\:s\in\bar{B}^k. 
\end{equation}
Let $c_l:[0,1]\longrightarrow[0,1]$ be a cutoff function which is supported in $\mathcal{I}_{l,\epsilon}$, and has value one in $\mathcal{I}_{j,\epsilon/2}$, value zero in $[0,1]\setminus\mathcal{I}_{j,\epsilon}.$
Define:
$$H_l(t,x)=H'_l(t,c_l(t)x),$$
and
\[H_l(t,x)=0\quad\forall t\in [0,1]\setminus\mathcal{I}_{l,\epsilon}.\] 
We now set $s_l:[0,1]\to\bar{B}^k$ to be \[s_l(t)=\phi_t^l(H_l(t,1),c_l(t)T_l),\quad\text{if }t\in\mathcal{I}_{l,\epsilon},\]
and 
\[s_l(t)=0,\quad\text{if }t\in[0,1]\setminus\mathcal{I}_{l,\epsilon}.\]
Now we define $\gamma'_l(t)$ as the following
\[\gamma'_l(t):=\Pi\circ\big(\gamma_l(t)+\sum_{i=1,j=0}^{k_j,n}s^l_{j,i}(t)X^l_{j,i}(t)\big),\]
where $s_l(t)=(s^l_{0,1}(t),...,s^l_{j,k_j}(t),s^l_{j+1,1}(t),...,s^l_{n,k_n}(t))\in\bar{B}^k$. Since $s_l$ is homotopic to the zero map in $\bar{B}^k$, so $\gamma'_{l}(t)$ is homotopic to $\gamma_{l}(t).$

\begin{claim}
There exist $\bar{\epsilon}>0$ and $l_0\in\mathbb{N}$ such that 
\[d_B((\gamma'_l(t),\mathcal{T}_{\tau_l(t)}),\{(v_0,\mathcal{T}_{\tau_0}),\{v_j\}\})\geq\bar{\epsilon}\]
for all $l\geq l_0,$ $t\in[0,1]$.
\end{claim}
\begin{description}
\item[Case 1]$t\in[0,1]\setminus\mathcal{I}_{l,\epsilon}.$ Since we have $\gamma_l(t)=\gamma_l'(t)$ for $t\in[0,1]\setminus\mathcal{I}_{l,\epsilon}$, then 
\[d_B((\gamma'_l(t),\mathcal{T}_{\tau_l(t)}),\{(v_0,\mathcal{T}_{\tau_0}),\{v_j\}\})\geq C\epsilon/10.\]
\item[Case 2]$t\in\mathcal{I}_{l,\epsilon}\setminus\mathcal{I}_{l,\epsilon/2}$. Since $\phi_t^l(\cdot,\cdot)$ is the energy decreasing flow, so we have
\[E_{\gamma_l(t)}\big(\phi_t^l(H_l(t,1),T),\mathcal{T}_{\tau_l(t)}\big)\leq E_{\gamma_l(t)}\big(H_l(t,1),\mathcal{T}_{\tau_l(t)}\big),\quad\forall T\in[0,T_l].\]
Moreover, since $H_l(t,1)\in B^k_{1/2^l}$, there exists $l_1\in\mathbb{N}$ such that
\[E_{\gamma_l(t)}(s_l(t),\mathcal{T}_{\tau_l(t)})\leq E_{\gamma_l(t)}(0,\mathcal{T}_{\tau_l(t)})+\nu(C\epsilon/10),\]
for all $l\geq l_1$. By Lemma \ref{remain}, we have \[d_B((\gamma'_l(t),\mathcal{T}_{\tau_l(t)}),\{(v_0,\mathcal{T}_{\tau_0}),\{v_j\}\})>\nu(C\epsilon/10).\]
\item[Case 3]$t\in\mathcal{I}_{l,\epsilon/2}$. If $d_B((\gamma'_l(t),\mathcal{T}_{\tau_l(t)}),\{(v_0,\mathcal{T}_{\tau_0}),\{v_j\}\})\to 0$ as $l\to\infty$, then we have $E(\gamma'_l(t),\mathcal{T}_{\tau_l(t)})=E(v_0,\mathcal{T}_{\tau_0})+\sum_{j=1}^nE(v_j,\mathbb{C})=W$. By Claim \ref{flow} we have 
\[E_{\gamma_l(t)}(s_l(t),\mathcal{T}_{\tau_l(t)})\leq E_{\gamma_l(t)}(0,\mathcal{T}_{\tau_l(t)})-c_0/10.\]
Since $\{(\gamma_l(t),\mathcal{T}_{\tau_l(t)})\}$ is a minimizing sequence, there exists $l_2\in\mathbb{N}$ such that
\[E_{\gamma_l(t)}(s_l(t),\mathcal{T}_{\tau_l(t)})\leq W-c_0/20,\quad\forall l\geq l_2,\]
which implies that there exists $\tilde{\epsilon}>0$ such that
\[d_B((\gamma'_l(t),\mathcal{T}_{\tau_l(t)}),\{(v_0,\mathcal{T}_{\tau_0}),\{v_j\}\})\geq \tilde{\epsilon},\quad\forall l\geq l_2.\]
Let $l_0:=\max\{l_1,l_2\}$ and $\bar{\epsilon}:=\min\{C\epsilon/10,\nu(C\epsilon/10),\tilde{\epsilon}\}$, we have proved the Theorem.
\end{description}    

\end{proof}
\subsection{Proof of Theorem \ref{main}}
\subsubsection*{Theorem \ref{main}}{\em Let $M$ be a closed Riemannian manifold of dimension at least three and suppose $(M,g)$ is a bumpy metric. For any homotopically nontrivial path $\beta\in\Lambda$, if $W>0$. Then there are possibly a conformal torus $v_0:\mathcal{T}_{\tau_0}\to M$ and finitely many harmonic spheres $\{v_j\}$ such that 
\begin{enumerate}
\item $E(v_0,\mathcal{T}_{\tau_0})+\sum_{j} E(v_j,\mathbb{C})=W,$
\item $\sum_{j} Index(v_j)\leq 1.$
	\end{enumerate}}
	\begin{proof}
Denote by $\mathcal{F}^W$ the collection of equivalent classes of conformal harmonic tori and harmonic spheres. By proposition \ref{countable} and \cite[Proposition B.24]{YS}, $\mathcal{F}^W$ is countable and thus the following set is countable:
\[\mathcal{U}^W=\Big\{\{[(v^1_0,\mathcal{T}_{\tau_0^1})],[v_j^1]\},\{[(v^2_0,\mathcal{T}_{\tau_0^2})],[v_j^2]\},...,\{[(v^n_0,\mathcal{T}_{\tau_0^1})],[v_j^n]\},...\Big\},\]
with $E(v_0^n,\mathcal{T}_{\tau_0^n})+\sum_j E(v_j^n,\mathbb{C})=W$ and $\sum_{j}\text{Index}(v_j^n)\geq 2$ for each $n\in\mathbb{N}$. 
		
Given a minimizing sequence $\{(\gamma_j,\mathcal{T}_{\tau_j(t)})\}_{j\in\mathbb{N}}$, $\gamma_j(\cdot,t):\mathcal{T}_{\tau_j(t)}\to M$, $(\gamma_j,\mathcal{T}_{\tau_j(t)})\in\tilde{\Lambda}$ for each $j$, with $[(\gamma_j,\mathcal{T}_{\tau_j(t)})]=[\beta]$, we abuse the notation here by using $j$ as the index for minimizing sequence and the finite collection of harmonic spheres in $\mathcal{U}^W$. Considering $\{[(v^1_0,\mathcal{T}_{\tau_0^1})],[v_j^1]\}\in\mathcal{U}^W$, and by Theorem \ref{deformation} there exists $\{(\gamma^1_j,\mathcal{T}_{\tau_j(t)})\}_{j\in\mathbb{N}}$ so that
\begin{enumerate}
\item $[(\gamma^1_j,\mathcal{T}_{\tau_j(t)})]=[\beta]$, $\forall j\in\mathbb{N}$,
\item $\{(\gamma^1_j,\mathcal{T}_{\tau_j(t)})\}_{j\in\mathbb{N}}$ is a minimizing sequence,
\item 
there exists $\epsilon_1>0$ and $j_1\in\mathbb{N}$ such that
\[d_B((\gamma_j^1(t),\mathcal{T}_{\tau_j(t)}),\{[(v^1_0,\mathcal{T}_{\tau_0^1})],[v_j^1]\})>\epsilon_1,\quad\forall t\in[0,1],\:\forall j>j_1.\]
		\end{enumerate}
We can apply Theorem \ref{deformation} again with $\{[(v^2_0,\mathcal{T}_{\tau_0^2})],[v_j^2]\}\in\mathcal{U}^W$ and obtain $\{\gamma_j^2,\mathcal{T}_{\tau_j(t)}\}_{j\in\mathbb{N}}$ so that
\begin{enumerate}
\item $[(\gamma^2_j,\mathcal{T}_{\tau_j(t)})]=[\beta]$, $\forall j\in\mathbb{N}$,
\item $\{(\gamma^2_j,\mathcal{T}_{\tau_j(t)})\}_{j\in\mathbb{N}}$ is a minimizing sequence,
\item 
there exist $\epsilon_1,\epsilon_2>0$ and $j_1,j_2\in\mathbb{N}$ such that
\[d_B((\gamma_j^2(t),\mathcal{T}_{\tau_j(t)}),\{[(v^l_0,\mathcal{T}_{\tau_0^1})],[v_l^1]\})>\epsilon_l,\quad\forall t\in[0,1],\:\forall j>j_l,\:l=1,2.\]
\end{enumerate}
Proceeding inductively we can find $\{(\gamma_j^m,\mathcal{T}_{\tau_j(t)})\}_{j\in\mathbb{N}}$ such that
\begin{enumerate}
\item $[(\gamma^m_j,\mathcal{T}_{\tau_j(t)})]=[\beta]$, $\forall j\in\mathbb{N}$,
\item $\{(\gamma^m_j,\mathcal{T}_{\tau_j(t)})\}_{j\in\mathbb{N}}$ is a minimizing sequence,
\item 
there exist $\epsilon_l>0$ and $j_l\in\mathbb{N}$ for $l=1,...,m$ such that
\[d_B((\gamma_j^m(t),\mathcal{T}_{\tau_j(t)}),\{[(v^l_0,\mathcal{T}_{\tau_0^1})],[v_l^1]\})>\epsilon_l,\quad\forall t\in[0,1],\:\forall j>j_l,\:l=1,...,m.\]
\end{enumerate}
We can choose an increasing sequence $p_m>j_m$ such that
\[\max_{t\in[0,1]}E(\gamma^m_{p_m}(\cdot,t),\mathcal{T}_{\tau_{p_m}(t)})\leq W+\frac{1}{m}.\]
The sequence $\{(\gamma^m_{p_m}(\cdot,t),\mathcal{T}_{\tau_{p_m}(t)})\}_{m\in\mathbb{N}}$ is a minimizing sequence, thus by Theorem \ref{minmaxtorus}, there exists a sequence $\{t_m\}_{m\in\mathbb{N}}\subset[0,1]$ such that one of the following case holds:
\begin{enumerate}
    \item there exist a conformal harmonic torus $v_0:\mathcal{T}_{\tau_0}\to M$ and finitely many harmonic spheres $v_j:S^2\to M$ such that
    \[d_B((\gamma_{p_m}^m(t_{m}),\mathcal{T}_{\tau_{p_m}(t)}),\{(v_0,\mathcal{T}_{\tau_0},\{v_j\})\})\to 0,\:m\to\infty\]
    \item there exist finitely many harmonic spheres $v_j:S^2\to M$ such that
    \[d_B((\gamma_{p_m}^m(t_{m}),\mathcal{T}_{\tau_{p_m}(t)}),\{v_j\})\to 0,\:m\to\infty.\]
\end{enumerate}
In either case we have 
\begin{enumerate}
\item $E(v_0,\mathcal{T}_{\tau_0})+\sum_{j} E(v_j,\mathbb{C})=W,$
\item $\sum_{j}\text{Index}(v_i)\leq 1.$
\end{enumerate}
\end{proof}
\appendix
\section{Countability of conformal Harmonic Torus}
We are going to prove in this section that for a generic choice of metric the space of conformal harmonic torus with bounded energy is countable.
\subsection{Jocabi Field}
\begin{theorem}[A Priori Estimate \cite{SU} Main Estimate 3.2]\label{regularity}
		Given $\Sigma$ and $M$, there exist $\epsilon_{su}>0$ and $\rho>0$ such that if $r_0<\rho$, $u:\Sigma\to M$ is harmonic and
		\[\int_{B_{r_0}^{\Sigma}(y)}|\nabla u|^2<\delta\epsilon_{su},\]
		then
		\[|\nabla u|^2(y)\leq\frac{\delta}{r_0^2}.\]
	\end{theorem}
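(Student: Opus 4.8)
The plan is to recognize this as the classical small-energy ($\epsilon$-regularity) estimate for harmonic maps from a surface and to prove it by combining a Bochner inequality for the energy density with a point-picking (blow-up) argument and the interior local boundedness estimate for subsolutions.

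First I would reduce to a flat model. Choose $\rho$ smaller than the injectivity radius of $\Sigma$ and small enough that whenever $r_0<\rho$ the geodesic ball $B_{r_0}^\Sigma(y)$ sits inside a conformal coordinate chart centered at $y$, in which $h=\lambda^2(dx^2+dy^2)$ with $\lambda$ bounded above and below by constants depending only on $(\Sigma,h)$. In such coordinates the harmonic map equation \eqref{EL2} reads $\Delta_0 u+A(u)(\nabla_0 u,\nabla_0 u)=0$ (the conformal factor cancels), the energy $\int|\nabla u|^2$ is conformally invariant, and geodesic balls on $\Sigma$ are comparable to Euclidean balls in these coordinates. Since under $x\mapsto y+r_0 x$ the energy density scales by $r_0^2$ while the energy is scale-invariant, it suffices to produce $\epsilon_0,C>0$ depending only on $M$ so that any $u:B_1\subset\mathbb{R}^2\to M$ with $\Delta_0 u=-A(u)(\nabla_0 u,\nabla_0 u)$ and $\int_{B_1}|\nabla_0 u|^2<\epsilon_0$ satisfies $|\nabla_0 u|^2(0)\le C\int_{B_1}|\nabla_0 u|^2$; then taking $\epsilon_{su}=\min\{\epsilon_0,C^{-1}\}$ and undoing the rescaling yields $|\nabla u|^2(y)\le\delta/r_0^2$ (in the relevant regime $\delta\le 1$, which is the only case needed in the applications).

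For the model estimate, recall that $u$ is smooth (cf.\ the remark after \eqref{EL2}). Writing $e=\tfrac12|\nabla_0 u|^2$, the bounds $|\Delta_0 u|\le c|\nabla_0 u|^2$ and $|\nabla_0\Delta_0 u|\le c(|\nabla_0 u|^3+|\nabla_0 u|\,|\nabla_0^2 u|)$, together with $\Delta_0 e=|\nabla_0^2 u|^2+\langle\nabla_0 u,\nabla_0\Delta_0 u\rangle$ and Cauchy--Schwarz, give the Bochner inequality $\Delta_0 e\ge\tfrac12|\nabla_0^2 u|^2-c_1 e^2\ge -c_1 e^2$ with $c_1=c_1(M)$. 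Now apply point-picking: the function $F(x)=(\tfrac12-|x|)^2 e(x)$ attains its maximum over $\overline{B}_{1/2}$ at an interior point $x_0$, and on $B_{d_0/2}(x_0)$ with $d_0=\tfrac12-|x_0|$ one has $e\le 4e_0$, $e_0:=e(x_0)$. Rescaling $v(z)=u(x_0+z/\sqrt{e_0})$ on $B_{R_0}$ with $R_0=\tfrac12 d_0\sqrt{e_0}$, the density $\tilde e:=\tfrac12|\nabla_0 v|^2$ satisfies $\tilde e\le 4$, $\tilde e(0)=1$, and $\int_{B_{R_0}}\tilde e<\epsilon_0/2$. If $R_0\ge 1$ then on $B_1$ we have $-\Delta_0\tilde e\le c_1\tilde e^2\le 4c_1\tilde e$, a nonnegative subsolution with bounded coefficient, so the local boundedness estimate gives $1=\tilde e(0)\le C'\int_{B_1}\tilde e<C'\epsilon_0/2$, impossible once $\epsilon_0<2/C'$. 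Hence $R_0<1$, so $d_0^2 e_0<4$ and therefore $\sup_{\overline{B}_{1/4}}e\le 16\,F(x_0)<64$. With this interior bound the coefficient in $-\Delta_0 e\le c_1 e^2\le 64 c_1 e$ is bounded on $B_{1/4}$, so the local boundedness estimate once more gives $|\nabla_0 u|^2(0)=2e(0)\le C\int_{B_{1/4}}|\nabla_0 u|^2\le C\int_{B_1}|\nabla_0 u|^2$, which completes the reduction.

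The routine ingredients are the Bochner computation and the interior local boundedness estimate for subsolutions on planar balls (standard De Giorgi--Nash--Moser theory). The real content is the point-picking step: the inequality $-\Delta_0 e\le c_1 e^2$ has an unbounded coefficient a priori, and one exploits the two-dimensional scale-invariance of the energy to normalize $e$ at the worst interior point and convert it, on a ball of definite size, into an inequality with a bounded coefficient. A secondary point to check carefully is that in the first reduction the constants $\rho$ and $\epsilon_{su}$ end up depending only on $\Sigma$ and $M$, i.e.\ that the comparison between geodesic balls, Euclidean balls, and the conformal factor $\lambda$ is uniform and survives the rescaling.
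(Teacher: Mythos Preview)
The paper does not prove this theorem; it is quoted verbatim from Sacks--Uhlenbeck \cite{SU} and used as a black box (the only addendum is the remark on bootstrapping to higher regularity). Your argument is therefore not competing with any proof in the paper.

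That said, your proof is correct. The reduction to the flat unit ball via conformal coordinates and scaling is clean, the Bochner inequality $\Delta_0 e\ge -c_1 e^2$ is derived correctly from $\Delta_0 u=-A(u)(\nabla_0 u,\nabla_0 u)$, and the Choi--Schoen point-picking step is carried out accurately: the maximum of $(\tfrac12-|x|)^2 e(x)$ gives $e\le 4e_0$ on $B_{d_0/2}(x_0)$, the rescaled energy on $B_{R_0}$ equals the original energy on $B_{d_0/2}(x_0)$ by two-dimensional scale invariance, and the dichotomy $R_0\ge 1$ versus $R_0<1$ closes via the local boundedness estimate for nonnegative subsolutions of $-\Delta\tilde e\le K\tilde e$. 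Your remark that the stated form with the parameter $\delta$ follows by taking $\epsilon_{su}\le\min\{\epsilon_0,C^{-1}\}$ and is only meaningful for $\delta\le 1$ is also correct and matches how the estimate is actually invoked later in the paper. This point-picking route (essentially Schoen's argument) is a standard alternative to the original Sacks--Uhlenbeck proof, which proceeded instead through uniform estimates for the perturbed $\alpha$-harmonic equations; your approach is more direct and avoids the perturbation entirely.
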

\begin{remark}
Once we know that $\nabla u \in L^{\infty}_{\text{loc}}$, it then follows from equation \eqref{EL2} that $\Delta u\in L^{\infty}_{\text{loc}}$, which implies by standard estimates on the inverse of the Laplacian that $u\in W^{2,p}_{\text{loc}}$ for all $p<\infty$. Hence we deduce that $\Delta u\in W^{1,p}_{\text{loc}}$ and hence $u\in W^{3,p}_{\text{loc}}$ for all $p>0$. We can then repeat this argument to show that $u\in W^{r,p}_{\text{loc}}$, $\forall r$, and so the smoothness of the solution follows. 
\end{remark}
	
\begin{definition}[tension field] We call the tension field of $f$ to be the following
\[\tau(f):=\text{trace}\nabla df.\]
\end{definition}
\begin{remark}
Intrinsically, the Euler-Lagrange equation for energy is
\begin{equation}
    \tau(f)=0.
\end{equation}
\end{remark}
\begin{definition}[Jocabi Field]
For a harmonic map $f:\Sigma\to M$, the Jacobi operator $\mathcal{J}_f:\Gamma(f^{-1}TM)\to\Gamma(f^{-1}TM)$ is defined as following
\begin{equation}\label{jocabi}
\mathcal{J}_f(V):=-\Delta V-\text{trace}_\Sigma R^M(V,df)df,
\end{equation}
where $\Delta$ is the Laplacian on sections of $f^{-1}TM$ given in local coordinates on $\Sigma$ by 
\[\Delta=h^{\alpha\beta}(f^*\nabla^M)_{\frac{\partial}{\partial x^\alpha}}(f^*\nabla^M)_{\frac{\partial}{\partial x^\beta}},\]
so we have that
\[I(V,W)=\int_\Sigma\langle\mathcal{J}_f(V),W\rangle.\]
We call $V\in\Gamma(f^{-1}TM)$ a Jacobi field for $f$ if $\mathcal{J}_f(V)=0.$
\end{definition}
Let
\begin{equation}\label{2.16}
    \begin{split}
        &f_{st}(x)=f(x,s,t),\\
        &f:\Sigma\times(-\epsilon,\epsilon)\times(-\epsilon,\epsilon)\to M
    \end{split}
\end{equation}
be a smooth family of maps between Riemannian manifolds of finite energy. $\Sigma$ may have nonempty boundary, in which case we require $f(x,s,t)=f(x,0,0)$ for all $x\in\partial\Sigma$ and all $s,t$.
\begin{proposition}\label{prop7}
For a smooth family of maps $f_{st}:\Sigma\to M$ defined as \eqref{2.16}, with
\begin{align*}
    V:=&\frac{\partial}{\partial s}\at[\Big]{s=t=0}f_{st}\\
    W:=&\frac{\partial}{\partial t}\at[\Big]{s=t=0}f_{st}.
\end{align*}
Let $f_{00}=f$ be a smooth harmonic map. We have the Jacobi operator $\mathcal{J}_f$ defined as (\ref{jocabi}) to be the following:
\[\mathcal{J}_f(V)=-\frac{\partial}{\partial s}\at[\Big]{s=0}\tau(f_{s0}).\]
\end{proposition}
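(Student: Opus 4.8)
The plan is to compute the coordinate $s$-derivative of $\tau(f_{s0})$ at $s=0$ directly, using the commutation rule for covariant derivatives in the bundle pulled back along the family $F:=f:\Sigma\times(-\epsilon,\epsilon)\times(-\epsilon,\epsilon)\to M$. Write $\nabla$ for the connection on $F^{-1}TM$ induced by the Levi-Civita connection $\nabla^M$ of $M$, and let $\nabla_s$, $\nabla_\alpha$ abbreviate $\nabla_{\partial/\partial s}$ and $\nabla_{\partial/\partial x^\alpha}$ for local coordinates $x^\alpha$ on $\Sigma$. Since $\nabla df$ involves the Levi-Civita connection of $(\Sigma,h)$ only through the $T^*\Sigma$-factor, which does not depend on $s$, in coordinates
\[
\tau(f_{s0})=h^{\alpha\beta}\big(\nabla_\alpha\,\partial_\beta f_{s0}-\Gamma^\gamma_{\alpha\beta}\,\partial_\gamma f_{s0}\big),\qquad \partial_\alpha f:=df\Big(\tfrac{\partial}{\partial x^\alpha}\Big),
\]
with $h^{\alpha\beta}$ and $\Gamma^\gamma_{\alpha\beta}$ independent of $s$, so that $\nabla_s\tau(f_{st})=h^{\alpha\beta}\big(\nabla_s\nabla_\alpha\,\partial_\beta f-\Gamma^\gamma_{\alpha\beta}\,\nabla_s\,\partial_\gamma f\big)$.

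Next I would invoke two standard identities for the pulled-back connection. First, since $[\partial/\partial s,\partial/\partial x^\alpha]=0$ on the source, the curvature of $\nabla$ gives
\[
\nabla_s\nabla_\alpha\,\partial_\beta f=\nabla_\alpha\nabla_s\,\partial_\beta f+R^M(\partial_s f,\partial_\alpha f)\,\partial_\beta f.
\]
Second, torsion-freeness of $\nabla^M$ yields the symmetry $\nabla_s\,\partial_\alpha f=\nabla_\alpha\,\partial_s f$ for every $\alpha$; evaluated at $s=t=0$ this reads $\nabla_s\,\partial_\beta f=\nabla_\beta V$ and $\nabla_s\,\partial_\gamma f=\nabla_\gamma V$. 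Substituting these and regrouping the $\Gamma^\gamma_{\alpha\beta}$-term,
\[
\nabla_s\tau(f_{s0})\big|_{s=0}=h^{\alpha\beta}\big(\nabla_\alpha\nabla_\beta V-\Gamma^\gamma_{\alpha\beta}\,\nabla_\gamma V\big)+h^{\alpha\beta}R^M(V,\partial_\alpha f)\,\partial_\beta f=\Delta V+\text{trace}_\Sigma R^M(V,df)df,
\]
where $\Delta$ is the trace Laplacian on $\Gamma(f^{-1}TM)$ appearing in \eqref{jocabi} and $\text{trace}_\Sigma R^M(V,df)df=h^{\alpha\beta}R^M(V,\partial_\alpha f)\,\partial_\beta f$.

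Finally I would use the hypothesis that $f=f_{00}$ is harmonic, i.e. $\tau(f)=0$. This is precisely what makes the naive coordinate derivative $\tfrac{\partial}{\partial s}\big|_{s=0}\tau(f_{s0})$ a well-defined section of $f^{-1}TM$ equal to the covariant derivative $\nabla_s\tau(f_{s0})\big|_{s=0}$, since the two differ only by Christoffel symbols of $M$ contracted against $\tau(f_{00})$. Hence
\[
-\frac{\partial}{\partial s}\Big|_{s=0}\tau(f_{s0})=-\Delta V-\text{trace}_\Sigma R^M(V,df)df=\mathcal{J}_f(V),
\]
which is the assertion. The computation is routine; the only delicate points are keeping the domain Christoffel symbols $\Gamma^\gamma_{\alpha\beta}$ straight — harmless here because $h$ is $s$-independent, so they commute past $\nabla_s$ — and matching the sign and index placement of the curvature term produced by the commutation rule with the $R^M(V,df)df$ term of \eqref{jocabi}; that sign check is the one thing worth verifying carefully.
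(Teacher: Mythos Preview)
Your proof is correct, but it takes a genuinely different route from the paper's. You linearize the harmonic map equation directly: write $\tau$ in coordinates, apply $\nabla_s$, commute derivatives to produce the curvature term, use the symmetry $\nabla_s\partial_\alpha f=\nabla_\alpha\partial_s f$, and read off $\Delta V+\mathrm{trace}_\Sigma R^M(V,df)df$. The paper instead argues variationally: it computes $\frac{\partial^2}{\partial s\partial t}\big|_{s=t=0}E(f_{st})$ by first integrating by parts in the spatial variables to produce $-\int\langle\tau(f_{s0}),\partial_t f\rangle$, then differentiating in $s$; harmonicity kills the term containing $\tau(f_{00})$, leaving $\int\langle-\partial_s\tau(f_{s0}),W\rangle$, which is identified with $\int\langle\mathcal{J}_f(V),W\rangle$ via the relation $I(V,W)=\int\langle\mathcal{J}_f(V),W\rangle$ stated just after \eqref{jocabi}. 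Since $W$ is arbitrary, the pointwise identity follows. Your approach is more self-contained---it does not appeal to the second-variation formula or to the identity $I(V,W)=\int\langle\mathcal{J}_f(V),W\rangle$, and it makes the role of the curvature commutator explicit. The paper's approach, on the other hand, ties the result back to the index form, which is the object the paper actually cares about; it also avoids tracking the domain Christoffel symbols and the sign of the curvature term, at the cost of invoking an integration-by-parts identity that is itself a short computation.
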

\begin{proof}
The proposition follows from the computation below:
\begin{align*}
\frac{\partial^2}{\partial s\partial t}\at[\Big]{s=t=0}E(f_{st})=\Big(&\frac{\partial}{\partial s}\at[\Big]{s=0}\int_M\langle df_{st},\nabla_{\frac{\partial}{\partial t}}df_{st}\rangle\Big)\at[\Big]{t=0}\\
    =&-\frac{\partial}{\partial s}\at[\Big]{s=0}\int_M\langle \tau(f_{s0}),\frac{\partial}{\partial t}\at[\Big]{t=0}f_{st}\rangle\\
    =&\int_M\langle-\frac{\partial}{\partial s}\at[\Big]{s=0}\tau(f_{s0}),\frac{\partial}{\partial t}\at[\Big]{s=t=0}f_{st}\rangle +\int_M\langle\tau(f_{00}),-\frac{\partial^2}{\partial s\partial t}\at[\Big]{s=t=0}f_{st}\rangle \\
    =&\int_M\langle-\frac{\partial}{\partial s}\at[\Big]{s=0}\tau(f_{s0}),W\rangle \\
    =&\int_M\langle\mathcal{J}_f(V),W\rangle.
\end{align*}
The second to last equality follows by $f_{00}$ is harmonic. Then it implies that 
\[\mathcal{J}_f(V)=-\frac{\partial}{\partial s}\at[\Big]{s=0}\tau(f_{s0}).\]
\end{proof}
Given a sequence of harmonic torus $f_i:\mathcal{T}_{\tau_i}\to M$ converges to a  harmonic torus $u:T_\tau\to M$ in the following way: 
\begin{equation}\label{101}
\tau_i\to\tau,\int_{\mathcal{T}_{\tau}}|\nabla(f_i\circ\iota_{\tau,\tau_i})-\nabla u|^2\to 0,    
\end{equation}
then we can use the \textit{difference} of them to generate a Jacobi field of $u$ (possibly trivial or just the tangential Jacobi fields generated by the conformal automorphisms of the domain).
\begin{align*}
\tau(f_{i}\circ\iota_{\tau,\tau_i})-\tau(u)&=\int_{0}^1\frac{\partial}{\partial s}\at[\Big]{s=t}\tau(u+s(f_{i}\circ\iota_{\tau,\tau_i}-u))dt\\  
&=\frac{\partial}{\partial t}\at[\Big]{t=t_i}\tau(u+t(f_{i}\circ\iota_{\tau,\tau_i}-u)),
\end{align*}
so we define $w$ to be the following
\begin{equation}\label{102}
    w(x):=\lim_{i\to\infty}\frac{f_{i}\circ\iota_{\tau,\tau_i}(x)-u(x)}{\max_{y\in \mathcal{T}_{\tau}}|f_{i}\circ\iota_{\tau,\tau_i}(y)-u(y)|},\quad\forall x\in T_\tau,
\end{equation}
and we observe that if $f_{i}\circ\iota_{\tau,\tau_i}\neq u$ then $w$ is well defined. Thus by Proposition \ref{prop7} and \eqref{101} we get that $\mathcal{J}_u(w)=0.$
\subsection{Bumpy Metric Theorem for Minimal Tori}
Now we define generic metric for a specific metric space. For $k\in\mathbb{N}$, the space of $L^2_k$ Riemannian metrics on $M$ simply denotes an open set of the Hilbert space of $L^2_k$-sections of the second symmetric power of $T^*M$.
\begin{definition}[Generic Metrics, \cite{JD}]
By a generic Riemannian metric on a smooth manifold $M$ we mean a Riemannian metric that belongs to a countable intersection of open dense subsets of the spaces of $L^2_k$ Riemannian metrics on $M$ with the $L^2_k$ topology, for some choice of $k\in\mathbb{N}$, $k\geq 2$.
\end{definition}
\begin{remark}
Notice that \textit{generic metric} always implies a countable intersection of open dense subsets of the metric space. For geodesics and harmonic maps the metric space is $L^2_k$ \cite{JD}, and for minimal submanifolds it's $C^{q}$ Riemannian metric for $q\geq 3$, \cite{BW}. 
\end{remark}
\begin{theorem}[Bumpy Metric Theorem for Minimal Submanifold, \cite{BW}]\label{bmt}
If $M$ is a compact manifold, then for a generic choice of metric of $C^q$ $g$ on $M$ ($q\geq 3$), there are no minimal submanifolds with nonzero normal Jacobi fields. That is, each minimal submanifold has nullity $0$.
\end{theorem}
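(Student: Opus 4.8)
The statement is Brian White's bumpy metric theorem, and the route I would take is the infinite-dimensional transversality argument of \cite{BW}: realize all closed minimal submanifolds of all $C^{q}$ metrics as the zero set of one Fredholm section, show that zero is a regular value, and apply the Sard--Smale theorem to the projection onto the space of metrics. Concretely, fix a closed model manifold $\Sigma$ of the relevant dimension; let $\mathcal{B}$ be the Banach manifold of unparametrized $C^{q}$ embeddings $\Sigma\hookrightarrow M$ (embeddings modulo $\mathrm{Diff}^{q}(\Sigma)$) and $\mathcal{G}$ the Banach manifold of $C^{q}$ Riemannian metrics on $M$. Over $\mathcal{B}\times\mathcal{G}$ form the bundle whose fiber at $(f,g)$ is the space of $C^{q-2}$ sections of the $g$-normal bundle of $f(\Sigma)$, and consider the section $\Phi(f,g)=\vec H_{g}(f)$ given by the $g$-mean curvature vector. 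Then $\mathcal{M}:=\Phi^{-1}(0)$ is precisely the universal space of minimal submanifolds.

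The main step is to prove that zero is a regular value of $\Phi$. The vertical derivative of $\Phi$ in the $\mathcal{B}$-direction at a minimal $(f,g)$ is the Jacobi operator $L_{f}$, a self-adjoint second order elliptic operator on the normal bundle, hence Fredholm of index $0$ with finite-dimensional cokernel equal to the space of normal Jacobi fields. I would then show that perturbing the metric produces enough first-order change in $\vec H_{g}(f)$ to span this cokernel: given a normal Jacobi field $V\not\equiv 0$, choose a point $p\in f(\Sigma)$ with $V(p)\neq 0$ and a symmetric $2$-tensor $\dot g$ supported in a thin tubular neighborhood of $f(\Sigma)$ near $p$ whose $1$-jet along $f(\Sigma)$ is tuned so that $(D_{g}\Phi)\dot g$ has nonzero $L^{2}$-pairing with $V$; since the mean curvature depends only on the $1$-jet of the metric this is a local linear computation, and embeddedness of $f$ lets $\dot g$ be localized without disturbing other sheets. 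Applying this to a basis of the cokernel shows $D\Phi$ is surjective, so $\mathcal{M}$ is a $C^{q-2}$ Banach submanifold of $\mathcal{B}\times\mathcal{G}$.

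With $\mathcal{M}$ a manifold, the projection $\pi:\mathcal{M}\to\mathcal{G}$, $\pi(f,g)=g$, is a $C^{q-2}$ Fredholm map of index $0$ (the index of $L_{f}$). For $q$ large enough that $\pi$ is $C^{q-2}$ with $q-2>1$, Sard--Smale yields a residual set of regular values $g$; for such $g$ the differential $D\pi$ is onto at every minimal $f(\Sigma)$ in $(M,g)$, and since $\mathrm{coker}\,D\pi\cong\mathrm{coker}\,L_{f}$, the operator $L_{f}$ has trivial cokernel, hence trivial kernel by self-adjointness, i.e.\ $f(\Sigma)$ has nullity $0$. To upgrade ``residual for a fixed $q$'' to the genericity in the statement, I would fix area bounds: for each $A\in\mathbb{N}$ restrict $\mathcal{M}$ to minimal submanifolds of area at most $A$, for which the set of ``bad'' metrics is closed (compactness of area-bounded minimal submanifolds together with Allard/White regularity) and nowhere dense (by the transversality above), then intersect over $A$ to obtain a residual, hence generic, set of $C^{q}$ metrics.

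The hard part is the regular-value step: one must show that metric variations alone generate enough normal deformations of the mean curvature vector to reach every direction in the cokernel of the Jacobi operator, and that these variations can be supported away from the rest of $f(\Sigma)$. This is exactly why the theorem is stated for embedded minimal submanifolds, and why the parametric versions used elsewhere in this paper require the extra hypotheses of no branch points and only transversal crossings --- the settings in which White's localization argument still applies.
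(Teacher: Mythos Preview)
The paper does not prove this theorem; it is quoted verbatim from \cite{BW} and used as a black box. Your sketch is precisely the Sard--Smale transversality argument that White carries out in \cite{BW}, so there is nothing to compare against in the present paper---your outline is correct and matches the original source.
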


\begin{definition}[Prime Harmonic Map]
Suppose that $f,h:\Sigma\to M$ are harmonic maps. $f$ is called a branched cover of $h$ if there exists a holomorphic map $g:\Sigma\to\Sigma$ of degree $d\geq 2$ such that $f=h\circ g$. We call $f$ a prime harmonic map if it's not a branched cover of another harmonic map.
\end{definition}
\begin{definition}[Branch Point] A point $p\in\Sigma$ is a branch point for the harmonic map $f:\Sigma\to M$ if $(\partial f/\partial z)(p)=0$, where $z$ is any complex coordinate near $p$. If $p$ is a branch point of $f$ but there exists some neighborhood $V$ containing $p$ such that $f(V)$ is an immersed surface, then we say that $p$ is a false branch point.
		\end{definition}
		\begin{definition}[Injective Point]
			If $f:\Sigma\to M$ is a parametrized minimal surface we say that $p\in\Sigma$ is an \textit{injective point} for $f$ if 
			\[df(p)\neq 0,\text{ and}\quad f^{-1}(f(p))=0.\]
			If $f:\Sigma\to M$ is connected and has injective points, we say it is \textit{somewhere injective}.
		\end{definition}
		\begin{lemma} \cite[Lemma 5.2.1]{JD}\label{bp}
			If a conformal harmonic map $f:\Sigma\to M$ is prime, its injective points form an open dense subset of $\Sigma$.
		\end{lemma}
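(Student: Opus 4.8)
The plan is to prove openness and density separately; openness is a short compactness argument and density — argued by contradiction against primeness — is the substance. \emph{Openness:} if $p$ is an injective point, then $df\neq 0$ on a neighbourhood $U$ of $p$ on which $f$ is an embedding; were there $p_n\to p$ with $p_n$ non-injective, choose $q_n\neq p_n$ with $f(q_n)=f(p_n)$, and since $\Sigma$ is compact a subsequence $q_n\to q$ with $f(q)=f(p)$, forcing $q=p$, so eventually $p_n,q_n\in U$ with $f(q_n)=f(p_n)$, contradicting injectivity of $f|_U$. Hence the set $\mathcal{O}$ of injective points is open.

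For density, note first that since $f$ is conformal and non-constant $\partial f/\partial z$ has isolated zeros, so the branch set $B$ is finite; on $\Sigma_0:=\Sigma\setminus B$ the map $f$ is a conformal immersion, every injective point lies in $\Sigma_0$, every fibre of $f$ is finite, and $f^{-1}(f(B))$ is finite. If $\mathcal{O}$ were not dense, pick a nonempty open $W\subset\Sigma_0\setminus f^{-1}(f(B))$ with no injective point, so for $p\in W$ the fibre is $f^{-1}(f(p))=\{p=p_1,\dots,p_m\}\subset\Sigma_0$ with $m\geq 2$; fix such a $p$ and disjoint neighbourhoods $U_i\ni p_i$ on which $f$ embeds. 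By properness, for $p'$ near $p$ the fibre $f^{-1}(f(p'))$ stays inside $\bigcup_i U_i$, so every non-injective $p'$ near $p$ lies in $\bigcup_{i\geq 2}(f|_{U_1})^{-1}\!\big(f(U_{p_i})\big)$. Now I invoke the local structure of branched minimal immersions: two immersed minimal sheets sharing a point either coincide on a neighbourhood of it, or meet in a set whose preimage in the domain is $1$-dimensional, hence nowhere dense (a difference-of-graphs / nodal-set analysis in the spirit of Hartman--Wintner and Gulliver--Osserman--Royden). If no sheet $f(U_{p_i})$ coincided with $f(U_1)$ near $f(p)$, the displayed union would be nowhere dense near $p$, contradicting that a whole neighbourhood of $p$ consists of non-injective points. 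So for some $i$ the sheets coincide near $f(p)$ and $\psi:=(f|_{U_{p_i}})^{-1}\circ f|_{U_1}$ is a biholomorphism onto its image with $\psi(p)=p_i\neq p$ and $f\circ\psi=f$, conformality of $\psi$ coming from that of $f$.

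It remains to make $\psi$ global. On the connected open dense set $\mathcal{R}:=\{x:x\notin B,\ f(x)\notin f(B)\}$ (whose $f$-fibres lie in $\mathcal{R}$) consider $Z:=\{(x,y)\in\mathcal{R}\times\mathcal{R}:f(x)=f(y)\}$ with its two projections $\pi_1,\pi_2\colon Z\to\mathcal{R}$; the sublocus where the two sheets coincide is open and also closed, by Sampson's unique continuation theorem for harmonic maps (sheets agreeing on an open set agree on the whole component), hence a union of components of $Z$. Let $Z_0$ be the component through $(p,p_i)$. Then $\pi_1\colon Z_0\to\mathcal{R}$ is a proper local biholomorphism, hence a finite covering of the connected $\mathcal{R}$; when its degree is one, $\psi:=\pi_2\circ\pi_1^{-1}$ is a holomorphic self-map of $\mathcal{R}$, $\psi\neq\mathrm{id}$, $f\circ\psi=f$, and it extends over the finitely many punctures to a holomorphic self-map of $\Sigma$ (removable singularities into the compact target). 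If $\deg\psi\geq 2$ then $f=f\circ\psi$ already exhibits $f$ as a branched cover; if $\deg\psi=1$ then $\psi$ is a nontrivial automorphism, necessarily of finite order $k\geq 2$ (an infinite orbit would give an infinite fibre of the nonconstant $f$), so $f$ descends to $\Sigma/\langle\psi\rangle$ and $f=h\circ g$ with $\deg g=k\geq 2$; the case $\deg\pi_1\geq 2$ is treated the same way and again yields a nontrivial factorization. In every case $f$ is not prime, a contradiction, so $\mathcal{O}$ is dense.

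\emph{Main obstacle.} The crux is the globalization step: showing the locally defined $\psi$ is monodromy-free and extends holomorphically across the finite exceptional set. The essential inputs are unique continuation for harmonic maps and properness of $f$ on the compact domain $\Sigma$ (here a torus), together with the local structure statement that tangential, non-coincident contact of two minimal sheets is $1$-dimensional; this last fact is exactly what excludes the pathology of a surface all of whose points are double points without the map being a cover.
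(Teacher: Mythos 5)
The paper itself does not prove Lemma \ref{bp}; it is quoted verbatim from \cite[Lemma 5.2.1]{JD}, so I can only measure your argument against the standard proof in the literature. Your overall plan is the correct one — openness by a compactness argument, density by contradiction using the local structure of tangential contact of minimal sheets plus unique continuation, then a globalization via the incidence correspondence $Z=\{(x,y):f(x)=f(y)\}$. The openness part and the local dichotomy (two branched minimal sheets through the same point either coincide locally or meet in a nowhere dense set, so a full open set of non-injective points forces local coincidence of sheets) are sound and match the Gulliver--Osserman--Royden / Moore framework.

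The genuine gap is in the globalization, and you have partly flagged it yourself. The case $\deg\pi_1\geq2$ is \emph{not} ``treated the same way'': when $\pi_1|_{Z_0}$ has degree $d\geq 2$ the would-be map $\psi=\pi_2\circ\pi_1^{-1}$ is $d$-valued and there is no holomorphic self-map of $\Sigma$ to work with. What one must do instead is pass to the equivalence relation on $\Sigma$ generated by the coincidence components of $Z$ and show that the quotient is again a Riemann surface $\Sigma'$ through which $f$ factors as $f=h\circ g$ with $g:\Sigma\to\Sigma'$ holomorphic of degree $\geq 2$. That requires verifying the relation is closed (Hausdorff quotient), that its classes are uniformly finite, that the quotient carries a complex structure (including across the finitely many punctures and possible fixed points), and that $f$ and the harmonicity descend — none of which is automatic, and this is precisely where the substance of the lemma lies. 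A second, smaller, point: when $\psi$ \emph{is} a global automorphism but has fixed points (e.g.\ an involution on a torus), the quotient $\Sigma/\langle\psi\rangle$ can have lower genus, so $g$ is a map $\Sigma\to\Sigma'$ with $\Sigma'\not\cong\Sigma$; this is a genuine factorization but does not literally match the paper's Definition of branched cover (which insists $g:\Sigma\to\Sigma$). Moore's definition in \cite{JD} is set up to allow the target Riemann surface to change, and you will need that more flexible definition for the argument to close. Finally, your subcase ``$\deg\psi\geq 2$'' is actually vacuous: a nonconstant $f$ satisfying $f\circ\psi=f$ with $\psi:\Sigma\to\Sigma$ a self-cover of degree $\geq 2$ would be constant on dense forward orbits, hence constant — so after establishing $\deg\pi_1=1$ the map $\psi$ is automatically an automorphism, and you can drop that branch of the case analysis.
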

		\begin{theorem}\cite[Theorem 5.1.1]{JD}\label{jdbumpy}
			If $M$ is a compact smooth manifold of dimension at least three, then for a generic choice of Riemannina metric on $M$, all prime compact parametrized minimal surfaces $f:\Sigma\to M$ are free of branch points and lie on nondegenerate critical submanifolds, each such submanifold being an orbit of the group $G$ of conformal automorphisms of $\Sigma$ which are homotopic to the identity. 
		\end{theorem}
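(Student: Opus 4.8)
This is \cite[Theorem 5.1.1]{JD}; here is the route I would take. The plan is an infinite-dimensional transversality (Sard--Smale) argument in the space of Riemannian metrics on $M$, with the decisive input that a prime minimal surface is somewhere injective, so the metric can be perturbed locally near an injective point. First I would fix the topological surface $\Sigma$, let $G$ be the identity component of its group of conformal automorphisms, and choose $p>2$. Using a fixed isometric embedding $M\hookrightarrow\mathbb{R}^N$, I would set up the Banach manifold $\mathcal{B}$ of somewhere-injective $W^{2,p}$ maps $f:\Sigma\to M$ (equivalently, prime maps, by Lemma \ref{bp}), let the conformal structure on $\Sigma$ vary in its finite-dimensional Teichm\"uller parameter, and let $\mathcal{G}^k$ be the space of $L^2_k$ metrics on $M$, $k\geq 2$. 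Since every conformal harmonic map is automatically a smooth branched minimal immersion by elliptic regularity (Theorem \ref{regularity} and the remark after it), it is equivalent to work with the tension-field operator $\Phi(f,g)=\tau_g(f)$ --- a smooth section of a Banach bundle over $\mathcal{B}\times\mathcal{G}^k$ with fiber $W^{0,p}(f^{-1}TM)$ over $(f,g)$ --- together with the conformality constraint, which is preserved along harmonic maps near a conformal one.

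The central step is to prove that the universal moduli space $\mathcal{Z}=\Phi^{-1}(0)$ is a Banach submanifold, i.e.\ that the full differential $D\Phi_{(f,g)}$ is surjective at every solution. The $f$-linearization is the Jacobi operator $\mathcal{J}_f$ of \eqref{jocabi}, which is elliptic, formally self-adjoint, and Fredholm of index $0$, so a cokernel element is a Jacobi field $\xi$. To eliminate it I would use primeness: by Lemma \ref{bp} the injective points of $f$ are open and dense, and near such a point $p$ the map $f$ is an embedding, so a metric variation $\dot g$ supported in a small ball about $f(p)$ produces an essentially arbitrary change of $\tau_g(f)$ on a neighborhood of $p$ in $\Sigma$. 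If $\xi$ paired to zero against all such variations, it would vanish on that neighborhood, and since $\mathcal{J}_f\xi=0$ is a second-order elliptic system, Aronszajn unique continuation forces $\xi\equiv 0$. This yields surjectivity and makes $\mathcal{Z}$ a smooth Banach manifold.

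Next I would apply the Sard--Smale theorem to the projection $\pi:\mathcal{Z}\to\mathcal{G}^k$, a Fredholm map whose index equals the dimension of $\ker\mathcal{J}_f$ on the generic fiber; that kernel is exactly the space of tangential Jacobi fields generated by $G$, so the index is $\dim G$. The regular values of $\pi$ are then residual in $\mathcal{G}^k$, and for such a metric $g$ the fiber $\pi^{-1}(g)$ is a finite-dimensional manifold on which $G$ acts; its components are $G$-orbits, and each is a nondegenerate critical submanifold of the energy with nullity exactly $\dim G$. To pass from ``residual in $L^2_k$'' to a genuinely generic metric in the stated ($C^q$ or smooth) sense, I would impose a uniform energy/area bound --- under which there are only finitely many orbits once the conformal group is normalized --- exhaust the energy parameter by a countable sequence, intersect the corresponding residual sets, and invoke the standard Taubes argument. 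Finally, to rule out true branch points I would remove false branch points by reparametrization, observe that a true branch point is a zero of the holomorphic section $\partial f/\partial z$ of an appropriate line bundle over $\Sigma$, adjoin this condition to $\mathcal{Z}$ to cut out a stratum of positive codimension, and run a parallel transversality argument showing it is avoided for $g$ in a residual set; intersecting the two residual sets completes the proof.

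I expect the surjectivity step to be the main obstacle: it is the single place where the somewhere-injectivity from Lemma \ref{bp} is combined with unique continuation for $\mathcal{J}_f$, and it is where the hypothesis $\dim M\geq 3$ is used, since one needs enough room in the normal directions for localized metric perturbations to move the tension field freely. The remaining ingredients --- Sobolev-exponent bookkeeping, the Taubes descent argument, and the branch-point stratum --- are routine by comparison, and the full details are carried out in \cite{JD}.
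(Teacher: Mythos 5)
The paper does not prove this theorem: it is cited as an external result from Moore's book \cite[Theorem 5.1.1]{JD} and invoked as a black box (via Definition~\ref{bumpy metric} and Lemma~\ref{bumpyy}). There is therefore no ``paper proof'' to compare against; the right benchmark is the argument in the reference. Your sketch does track that argument faithfully at the structural level: set up the universal moduli space over the product of a Banach manifold of somewhere-injective $W^{2,p}$ maps with a Sobolev space of metrics (letting the Teichm\"uller parameter vary as well), show the full linearization is surjective by combining somewhere injectivity from Lemma~\ref{bp} with a localized metric perturbation and a unique-continuation argument for the elliptic system $\mathcal{J}_f\xi=0$, then apply Sard--Smale to the projection onto the metric factor, and finally cut out the branch-point stratum as a positive-codimension condition and run a parallel transversality argument. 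These are indeed the load-bearing pieces of Moore's proof, and you correctly identify surjectivity of the linearization as the crux and the place where $\dim M\geq 3$ enters.

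Two small points to tighten. First, the Fredholm index bookkeeping is stated loosely: the Fredholm index of the projection $\pi:\mathcal{Z}\to\mathcal{G}^k$ is the index of the $(f,\omega)$-linearization, not ``$\dim\ker\mathcal{J}_f$.'' Since $\mathcal{J}_f$ is a self-adjoint elliptic operator of index zero on a closed surface, the correct count is $\dim\mathcal{T}_\Sigma$ (Teichm\"uller dimension), and the claim that regular fibers are unions of $G$-orbits uses that $\ker\mathcal{J}_f$ is exactly $\dim G$ and then needs to be reconciled with $\dim\mathcal{T}_\Sigma$; for the torus this works out because $\dim G=\dim\mathcal{T}_\Sigma=2$, but as stated the reasoning does not transfer to other genera, and Moore's actual setup carries this out carefully (including a slice for the $G$-action). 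Second, the ``Taubes descent'' remark is not really needed under the paper's convention: in Appendix~A the paper \emph{defines} a generic metric to be one lying in a countable intersection of open dense subsets of the space of $L^2_k$ metrics, so a residual set in $L^2_k$ is already what is asserted. Neither point affects the correctness of your high-level route, which is, as you note, the one carried out in detail in \cite{JD}.
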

\begin{remark}
In Theorem \ref{jdbumpy}, nondegeneracy of a prime confromal harmonic map means that the Jacobi field of it are those generated by the conformal automorphisms of $\Sigma.$
		\end{remark}
\begin{definition}[Bumpy Metric]\label{bumpy metric}
 A metric $g$ on $M$ is called \textit{bumpy} if 
 \begin{enumerate}
     \item there is no smooth immersed minimal submanifold with a non-trivial Jacobi field, \item there's no prime compact parametrized minimal surface with a non-trivial Jacobi field other than the Jacobi fields generated by the conformal automorphisms of its domain,
     \item there's no prime compact parametrized minimal surface with transversal crossings,
     \item the prime compact parametrized minimal surface is free without branch points.
 \end{enumerate}
\end{definition}
In the case $\Sigma=\mathcal{T}_{\tau}$, the conformal automorphism of $\mathcal{T}_{\tau}$ is $S^1\times S^1.$ Given a conformal harmonic torus $u:\mathcal{T}_{\tau}\to M$, if $u$ is prime then it's free of branch points by Theorem \ref{jdbumpy}. If $u$ is not prime it can be written as a branched cover of a prime harmonic torus, and all of its branched points are false. Theorem \ref{jdbumpy} implies that for all conformal harmonic torus $u:T_\tau\to M$, $u(T_\tau)$ is a smooth immersed minimal submanifold. 
\subsection{Geodesic coordinate and geodesic frame}\label{sec_coordinate}

For any $p\in\Sigma$, we can construct a ``partial" geodesic coordinate and a geodesic frame on a neighborhood of $p$ in $M$ as follows:

\begin{enumerate}
\item Choose an oriented, orthonormal basis $\{e_1,e_2\}$ for $T_p\Sigma$.  The map
\begin{align*}
F_0: \mathbf{x} = (x^1,x^2) &\mapsto \exp^\Sigma_p(x^j e_j)
\end{align*}
parametrizes an open neighborhood of $p$ in $\Sigma$, where $\exp^\S$ is the exponential map of the induced metric on $\Sigma$.  For any $\mathbf{x}$ of unit length, the curve $\gamma(t) = F_0(t\mathbf{x})$ is called a \emph{radial geodesic on $\Sigma$} (\emph{at $p$}).  By using $\nabla^\Sigma$ to parallel transport $\{e_1,e_2\}$ along these radial geodesics, we get a local orthonormal frame for $T\Sigma$ on a neighborhood of $p$ in $\Sigma$.  The frame is still denoted by $\{e_1,e_2\}$.
\item Choose an orthonormal basis $\{e_3,\cdots, e_{n}\}$ for $N_p\Sigma$.  By using $\nabla^\bot$ to parallel transport $\{e_3,\cdots,e_{n}\}$ along radial geodesics on $\Sigma$, we obtain a local orthonormal frame for $N\Sigma$ on a neighborhood of $p$ in $\Sigma$.  This frame is still denoted by $\{e_3,\cdots,e_n\}$. It is clear that $\{e_1,e_2, e_3,\cdots,e_n\}$ is a local orthonormal frame for $TM|_\Sigma$.
\item The map
\begin{align*}
F: (\mathbf{x},\mathbf{y}) = \big((x^1,x^2), (y^{3},\cdots,y^{n})\big) &\mapsto \exp_{F_0(\mathbf{x})}(y^\alpha e_{\alpha})
\end{align*}
parametrizes an open neighborhood of $p$ in $M$.  The map $\exp$ is the exponential map of $(M,g)$.  For any $\mathbf{y}$ of unit length, the curve $\Sigma(t) = F(\mathbf{x},t\mathbf{y}) = \exp_{F_0(\mathbf{x})}(t\mathbf{y})$ is called a \emph{normal geodesic for $\Sigma\subset M$}.
\item For any $\mathbf{x}$, step (ii) gives an orthonormal basis $\{e_1,\cdots,e_{n}\}$ for $T_{F(\mathbf{x},0)}M$. By using $\nabla$ to parallel transport it along normal geodesics, we have an orthonormal frame for $TM$ on a neighborhood of $p$ in $M$.  This frame is again denoted by $\{e_1,\cdots,e_{n}\}$.
\end{enumerate}
\subsection{The Space of Conformal Harmonic Tori with Bounded Energy is Countable}
\begin{lemma}\label{bumpyy}
Given a closed smooth manifold $M$ of dimension $n\geq 3$. Suppose $(M,g)$ is a bumpy metric, then for a conformal harmonic torus $u:\mathcal{T}_{\tau}\to M$. There exists $\epsilon>0$, which depends on the equivalent class of $u$, such that for any conformal harmonic torus $f:\mathcal{T}_{\tau'}\to M$, if $|\tau-\tau'|<\epsilon$ and \[\int_{\mathcal{T}_{\tau}}|\nabla (f\circ\iota_{\tau,\tau'})-\nabla u|^2<\epsilon,\] 
then $[f]=[u].$ 
\end{lemma}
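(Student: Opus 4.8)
The plan is to argue by contradiction. If the lemma fails for the class $[u]$, there is a sequence of conformal harmonic tori $f_i:\mathcal{T}_{\tau_i}\to M$ with $\tau_i\to\tau$, with $\int_{\mathcal{T}_\tau}|\nabla(f_i\circ\iota_{\tau,\tau_i})-\nabla u|^2\to 0$, and with $[f_i]\neq[u]$ for every $i$. Write $g_i:=f_i\circ\iota_{\tau,\tau_i}:\mathcal{T}_\tau\to M$. Since $\iota_{\tau,\tau_i}$ is affine, $g_i$ is harmonic for the flat metric $h_i:=\iota_{\tau,\tau_i}^{*}(dz\,d\bar z)$ on $\mathcal{T}_\tau$, and $h_i\to h_\tau$ in $C^\infty$; moreover $f_i$ being weakly conformal is equivalent to the vanishing of the Hopf differential of $g_i$ in the conformal class $[h_i]$, so each pair $(g_i,[h_i])$ is a critical point of the energy $E$ on $W^{1,2}(\mathcal{T}_\tau,M)\times\mathrm{Teich}(T^2)$, and $(g_i,[h_i])\to(u,[h_\tau])$. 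First I would upgrade the convergence to $C^\infty$: since $g_i\to u$ in $W^{1,2}$, $u$ is smooth, and $\int_{B_r(x)}|\nabla u|^2\to 0$ uniformly in $x$ as $r\to0$, one has $\int_{B_r(x)}|\nabla g_i|^2<\delta\epsilon_{su}$ uniformly in $x$ for a fixed small $r$ and all large $i$; the a priori estimate of Theorem \ref{regularity}, applied uniformly over the precompact family $\{h_i\}$, gives a uniform $C^1$ bound on the $g_i$, and bootstrapping \eqref{EL2} gives uniform $C^k$ bounds for all $k$, so $g_i\to u$ in $C^\infty(\mathcal{T}_\tau,M)$ and the immersed surfaces $g_i(\mathcal{T}_\tau)$ converge smoothly to $u(\mathcal{T}_\tau)$.

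Suppose first that $u$ is prime. By Theorem \ref{jdbumpy}, $u$ is then free of branch points and $(u,[h_\tau])$ lies on a nondegenerate critical submanifold of $E$ on $W^{1,2}(\mathcal{T}_\tau,M)\times\mathrm{Teich}(T^2)$, namely the orbit $\mathcal S=\{(u\circ t,[h_\tau]):t\in G\}$ of the group $G\cong S^1\times S^1$ of conformal automorphisms of $\mathcal{T}_\tau$ homotopic to the identity. I would extract the relevant element of the kernel of its normal Hessian as in \eqref{102}: we may assume $g_i\neq u$ (if $g_i=u$ then $u$ is weakly conformal for both $h_i$ and $h_\tau$, forcing $[h_i]=[h_\tau]$, hence $\tau_i=\tau$ and $f_i=u$, contrary to $[f_i]\neq[u]$), and set $w:=\lim_i(g_i-u)/\|g_i-u\|_{C^0}$ along a subsequence, so that $w(x)\in T_{u(x)}M$ and $\|w\|_{C^0}=1$; using that $g_i$ is $h_i$-harmonic, $u$ is $h_\tau$-harmonic, $h_i\to h_\tau$, and Proposition \ref{prop7}, the limit of the rescaled tension-field identity shows $w$ is a null direction of the second variation of $E$ on $W^{1,2}(\mathcal{T}_\tau,M)\times\mathrm{Teich}(T^2)$ at $(u,[h_\tau])$. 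Nondegeneracy of $\mathcal S$ forces this null space to be $T_{(u,[h_\tau])}\mathcal S$, which has trivial Teichm\"uller component; so $w=du(\xi)$ for a translation field $\xi$, and the Morse--Bott structure around $\mathcal S$ forces $(g_i,[h_i])\in\mathcal S$ for all large $i$, i.e. $\tau_i=\tau$ and $g_i=u\circ t_i$ for a translation $t_i$. Then $f_i=u\circ t_i$, so $[f_i]=[u]$, a contradiction.

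Suppose instead that $u$ is not prime. Then $u=h\circ p$, where $h:\mathcal{T}_{\tau_h}\to M$ is a prime conformal harmonic torus (free of branch points by Theorem \ref{jdbumpy}) and $p:\mathcal{T}_\tau\to\mathcal{T}_{\tau_h}$ is a holomorphic covering of degree $d\geq2$, so $N:=u(\mathcal{T}_\tau)=h(\mathcal{T}_{\tau_h})$ is a smooth immersed minimal submanifold. Since $g_i(\mathcal{T}_\tau)\to N$ smoothly and $N$ has zero nullity by Theorem \ref{bmt}, $N$ is rigid, so $f_i(\mathcal{T}_{\tau_i})=N$ for all large $i$. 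Using that $h$ is injective on an open dense set (Lemma \ref{bp}) one lifts $f_i$ through the immersion $h$ to write $f_i=h\circ p_i$ with $p_i:\mathcal{T}_{\tau_i}\to\mathcal{T}_{\tau_h}$ holomorphic of degree $d$ and $C^\infty$-close to $p$. An elementary integrality argument on the lattices shows that a holomorphic map $\mathcal{T}_{\tau'}\to\mathcal{T}_{\tau_h}$ of degree $d$ close to $p$ forces $\tau'=\tau$ and differs from $p$ only by a translation; since $\tau_i\to\tau$ this gives $\tau_i=\tau$ and $p_i=p\circ t_i$ for large $i$, whence $f_i=u\circ t_i$ and $[f_i]=[u]$, again a contradiction.

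The step I expect to be the main obstacle is, in the prime case, the passage from the normalized difference \eqref{102} to a genuine element of the kernel of Moore's Hessian on the \emph{enlarged} configuration space $W^{1,2}(\mathcal{T}_\tau,M)\times\mathrm{Teich}(T^2)$ --- not merely a classical Jacobi field of $u$ on the fixed torus --- which is what allows the bumpy metric theorem of \cite{JD}, in its Morse--Bott form, to simultaneously kill the Teichm\"uller component of $w$ (so that $\tau_i-\tau=o(\|g_i-u\|_{C^0})$) and pin $(g_i,[h_i])$ to the orbit $\mathcal S$; the non-prime case additionally relies on the rigidity of nondegenerate immersed minimal submanifolds supplied by \cite{BW}.
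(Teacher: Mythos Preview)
Your argument is correct and reaches the same conclusion, but the organization differs from the paper's. The paper first splits on whether the \emph{images} $f_l(\mathcal{T}_{\tau_l})$ and $u(\mathcal{T}_\tau)$ coincide: when they differ, it writes $f_l(\mathcal{T}_{\tau_l})$ explicitly as a normal graph $\exp_{F(\mathbf{x},0)}(\eta^l_\alpha(\mathbf{x})e_\alpha)$ over $u(\mathcal{T}_\tau)$, expands the minimal-surface equation for the graph to first order, and extracts a nonzero normal Jacobi field of the immersed minimal surface $u(\mathcal{T}_\tau)$, contradicting Theorem~\ref{bmt}; when the images agree, it shows the degrees match and reduces to the prime case via the normalized difference \eqref{102} and Theorem~\ref{jdbumpy}. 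You instead split on whether $u$ is prime: in the prime case you invoke the Morse--Bott structure on the enlarged configuration space from Moore's theorem directly, which handles both image-subcases at once and makes transparent the step the paper leaves implicit (why a tangential $w$ actually forces $[f_l]=[u]$), via local uniqueness of critical points near a nondegenerate critical manifold; in the non-prime case you use rigidity of $N$ from Theorem~\ref{bmt} to force the images to coincide, then a lattice/discreteness argument on holomorphic maps between flat tori in place of the paper's degree-and-branched-cover reduction. The paper's explicit graph computation is more self-contained (no appeal to an infinite-dimensional Morse--Bott lemma), while your route is more conceptual and avoids reproving the second-variation formula for normal graphs; both rely on the same two bumpy-metric inputs, Theorem~\ref{bmt} and Theorem~\ref{jdbumpy}, just invoked at different stages.
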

\begin{proof}
Given a conformal harmonic map $u:T_\tau\to M$, Theorem \ref{jdbumpy} implies that the image $u(T_\tau)$ is free without branch points, thus a smooth immersed minimal surface.

We argue by contradiction. If not, then there exists a sequence of conformal harmonic torus $\{f_l\}_{l\in\mathbb{N}}$, $f_l:\mathcal{T}_{\tau_l}\to M$, such that $|\tau-\tau_l|\to 0$ and \[\int_{\mathcal{T}_{\tau}}|\nabla (f_l\circ\iota_{\tau,\tau_l})-\nabla u|^2\to 0,\]
and $[f_l]\neq[u]$ for all $l\in\mathbb{N}.$ By strong convergence in $W^{1,2}$, Theorem \ref{regularity}, and Arzel\`a-Ascoli theorem we know that the convergence $f_l\to u$ is smooth and uniform. 
			
First we consider the case that the image of the maps are different, i.e., $f_{l}(\mathcal{T}_{\tau_l})\neq u(T_\tau)$ for all $l\in\mathbb{N}$. Since the convergence is smooth, for $l$ sufficiently large, $f_l(\mathcal{T}_{\tau_l})$ can be written as \textit{graph} of $u(T_\tau)$. In other words, choose $p\in u(T_\tau)$, we can use local partial geodesic coordinate centered at $p$ (see Section \ref{sec_coordinate} for the construction) and write $f_l(\mathcal{T}_{\tau_l})$ as the following
            \[\exp_{F(\mathbf{x},0)}(\eta^l_\alpha(\mathbf{x})e_\alpha)=\phi_l(\mathbf{x}),\]
			for some smooth real functions $\eta^l_\alpha:\Sigma\to\mathbb{R}$. Thus we have
			\begin{equation}\label{722}
			    \begin{split}
		d\phi_l(e_i)=&e_i+\big(\frac{\partial\eta^l_\alpha}{\partial x_i}\big)e_\alpha+\eta^l_\alpha\nabla_{e_i}e_\alpha\\
           =&e_i+\big(\frac{\partial\eta^l_\alpha}{\partial x_i}\big)e_\alpha\\
           &+\eta^l_\alpha\Big(-h_{\alpha ij}|_pe_j-\eta^l_\beta(R_{\alpha i\beta j}+\sum_kh_{\alpha ik}h_{\beta jk})|_p e_j-\eta^l_\gamma R_{\alpha\beta\gamma i}|_p e_\beta\Big)\\
           &+\mathcal{O}(|\eta^l|^3),   
			    \end{split}
			\end{equation}
where the indexes $i,j,k$ are either $1$ or $2$ and the indexes $\alpha, \beta, \gamma$ range from $3$ to $n$. The third line of equation \eqref{722} follows from the following (for the proof see \cite[Proposition 2.6]{MTW}).
\begin{align*}
\nabla_{e_i}e_\alpha|_{\phi_l}=&\big(-h_{\alpha ij}|_p-\eta^l_\beta(R_{\alpha i\beta j}+\sum_kh_{\alpha ik}h_{\beta jk})|_p\big)e_j\\
&-\eta^l_\gamma R_{\alpha\beta\gamma i}|_p e_\beta +\mathcal{O}(|\eta^l|^2).    
\end{align*}
Now we compute the \textit{metric} $\tilde{g}^l_{ij}$ for the \textit{graph} $\phi_l(\mathbf{x})$ 
\begin{align*}
\tilde{g}^l_{ij}=&\langle d\phi_l(e_i),d\phi_l(e_j)\rangle\\
=&\delta_{ij}+\frac{\partial\eta^l_\alpha}{\partial x_i}\frac{\partial\eta^l_\alpha}{\partial x_j}-2\eta^l_\alpha h_{\alpha ij}-2\eta^l_\alpha\eta^l_\beta R_{\alpha i\beta j}-2\eta^l_\alpha\eta^l_\beta h_{\alpha ik}h_{\beta jk}+\mathcal{O}(|\eta^l|^3).
\end{align*}
Since we will consider solutions where $|\eta^l|+|\nabla\eta^l|$ is small. We will often write $\tilde{g}^l_{ij}$ as
\[\tilde{g}_{ij}=\delta_{ij}-2\eta^l_\alpha h_{\alpha ij}+\mathcal{Q}_{ij},\]
where $\mathcal{Q}_{ij}$ denotes a matrix that is quadratic in $\eta^l_\alpha$ and $\nabla\eta^l_\alpha$ and will be allowed to vary from line to line. It follows that the inverse metric is given by
\[(\tilde{g}^l_{ij})^{-1}=\delta_{ij}+2\eta^l_\alpha h_{\alpha ij}+\mathcal{Q}_{ij},\]
then we have 
\[\det \tilde{g}^l_{ij}=(1+2\eta^l_\alpha h_{\alpha ij})(1-2\eta^l_\alpha h_{\alpha ij})+\mathcal{Q}=1+\mathcal{Q},\]
where the equality used minimality of $u(T_\tau)$ and $\mathcal{Q}$ this time is a scalar quadratic term. Whenever $J(s)$ is a differentiable path of matrices, the derivative at $0$ of the determinant is given by 
\[\frac{d}{ds}\at[\Big]{s=0}\det J(s)=\det J(0)\text{Trace}(J^{-1}(0)J'(0)).\]
Applying this with $J(s)=\tilde{g}^l_{ij}(s)$, where $\tilde{g}^l_{ij}(s)$ is computed with $\eta^l_\alpha+s\nu_\alpha$ in place of $\eta^l_\alpha$, let $X_l:=\eta^l_\alpha e_\alpha$ we get
\begin{align*}
\frac{d}{ds}\at[\Big]{s=0}\det J(s)=&(1+\mathcal{Q})(\delta_{ij}+2\eta^l_\alpha h_{\alpha ij}+\mathcal{Q}_{ij})\\
&\Big(2\frac{\partial\nu_\alpha}{\partial x_i}\frac{\partial\eta^l_\alpha}{\partial x_j}-2\nu_\alpha h_{\alpha ij}-2\eta^l_\alpha\nu_\beta R_{\alpha i\beta j}-2\eta^l_\alpha\nu_\beta h_{\alpha jk}h_{\beta ik}+\mathcal{Q}_{ij}\Big)\\
=&2\Big(\frac{\partial\nu_\alpha}{\partial x_i}\frac{\partial\eta^l_\alpha}{\partial x_i}-2\eta^l_\alpha\nu_\beta R_{\alpha i\beta i}-2\eta^l_\alpha\nu_\beta h_{\alpha ik}h_{\beta ik}\Big)+\mathcal{Q}_{ij}\\
=&-2\langle(\Delta_\Sigma^NX_l+\text{Tr}[R_M(\cdot,X_l)\cdot]+\tilde{A}(X_l)),\nu_\alpha e_\alpha\rangle+\mathcal{Q}_{ij},
\end{align*}
where $\tilde{A}$ is the Simons' operator and $\Delta_\Sigma^N$ is the Laplacian on the normal bundle(see \cite[pg.41]{CDD} for the definitions). Since $f_l(\mathcal{T}_{\tau_l})$ is minimal we get that $X_l=\eta^l_\alpha e_\alpha$ satisfies the following equation
\begin{equation}\label{733}
\Delta_\Sigma^NX_l+\text{Tr}[R_M(\cdot,X_l)\cdot]+\tilde{A}(X_l))+\mathcal{Q}_{ij}=0.    
\end{equation}
With the assumption $f_l(\mathcal{T}_{\tau_l})\neq u(T_\tau)$ we can choose $q\in u(T_\tau)$ such that $\eta^l_\beta(q)\neq 0$ for all $l\in\mathbb{N}$ and for some $\beta\in\{3,...,n\}.$ We now consider 
\[\tilde{X}_l(\mathbf{x})=\frac{\eta^l_\alpha(\mathbf{x}) e_\alpha}{\eta^l_\beta(q)},\quad\mathbf{x}\in u(T_\tau),\] 
then $\langle \tilde{X}_l(q),e_\beta\rangle=1$ for all $l$. $\tilde{X}_l(\mathbf{x})$ converges uniformly to a normal vector field $X$ on $\Sigma$. Multiplying equation \eqref{733} by $1/\eta^l_\beta(q)$ and using the uniform convergence we can pass to limits to get that
\[\Delta_\Sigma^NX+\text{Tr}[R_M(\cdot,X)\cdot]+\tilde{A}(X)=0.\]
Thus $X$ is a Jacobi field of $\Sigma$ and it contradicts theorem \ref{bmt}.

Now we consider the case that $f_{l}(\mathcal{T}_{\tau_l})=u(T_\tau)$ for all $l\in\mathbb{N}$. Since the convergence is smooth we can assume that
\begin{equation}\label{souri}
|f_l\circ\iota_{\tau_l,\tau}(x)-u(x)|<1/l,\:\forall x\in T_\tau,    
\end{equation}
and
\begin{equation}
\Big|\frac{\partial f_l\circ\iota_{\tau,\tau_l}(x)}{
\partial x^{\alpha}}-\frac{\partial u(x)}{\partial x^{\alpha}}\Big|<1/l,\:\forall x\in T_\tau.
\end{equation}
We define $w$ as the following 
\begin{equation}
    w(x):=\lim_{l\to\infty}\frac{f_{l}\circ\iota_{\tau,\tau_l}(x)-u(x)}{\max_{y\in \mathcal{T}_{\tau}}|f_{l}\circ\iota_{\tau,\tau_l}(y)-u(y)|},\quad\forall x\in T_\tau,
\end{equation}
If $u$ is a prime harmonic sphere, then Proposition \ref{prop7} implies that $w$ is a tangential Jacobi field of the map $u$. Theorem \ref{jdbumpy} implies that $W$ generated by the conformal automorphism of $T_\tau$, contradicting the assumption $[f_l]\neq[u]$. Now we consider the case that $u$ is not prime. \begin{claim}
For $l$ sufficiently large, we have
\[deg(f_l)=deg(u).\]
\end{claim}
\begin{proof}[Proof of the claim]
It's known that if two maps are homotopic then they have the same degree. The stradegy of the proof is similar and can be found in \cite[Chapter 5]{dt}.
				
For $l$ sufficiently large, by equation \eqref{souri} we know that $f_{l}\circ\iota_{\tau,\tau_l}$ is homotopic to $u$. That is, there exists a continuous map $H_l:T_\tau\times[0,1]\to u(T_\tau)$:
\[H_l(x,t):=\gamma_x^l(t),\:x\in T_\tau,\]
such that $H_l(x,0)=u(x)$ and $H_l(x,1)=f_{l}\circ\iota_{\tau,\tau_l}(x)$ for all $x\in T_\tau$, where $\gamma_x^j(t)$ denotes the unique geodesic with respect to the intrinsic metric starting from $u(x)$ with end point at $f_{l}\circ\iota_{\tau,\tau_l}(x)$. Since $f_{l}(\mathcal{T}_{\tau_l})=u(T_\tau)$, and by Lemma \ref{bp} we can choose $y\in u(T_\tau)$ so it satisfies the following conditions:
\begin{enumerate}
\item $\forall p\in u^{-1}(y)$, $du(p)\neq 0$,
\item $\forall p\in\big(f_l\circ\iota_{\tau,\tau_l}\big)^{-1}(y)$, $d\big(f_l\circ\iota_{\tau,\tau_l}\big)(p)\neq 0$,
\item $y$ is a regular value for $H_l$ and $H_l|_{\partial(T_\tau\times[0,1])}$.
\end{enumerate}
$H_l^{-1}(y)$ is a compact 1-dimensional submanifold with boundary $(H_j|_{\partial(T_\tau\times[0,1])})^{-1}(y)$. In other words, it contains embedded arcs which are transverse to $\partial(T_\tau\times[0,1])$. By \cite[Chapter 5]{dt}, given $p_1\in H_l^{-1}(y)$, there is an unique $p_2\in H_l^{-1}(y)$, $p_1\neq p_2$, and a component arc $\Gamma\in H_l^{-1}(y)$ with $\partial\Gamma=\{p_1,p_2\}$. By \cite[Chapter 5, Lemma 1.2]{dt}, $p_1$ and $p_2$ are of opposite type for $H_l|_{\partial(T_\tau\times[0,1])}$. This implies that for any $p_1\in u^{-1}(y)$, there is an unique corresponding $p_2\in \big(f_l\circ\iota_{\tau,\tau_l}\big)^{-1}(y)$. Then it follows that degree of $u$ and degree of $f_l$ are the same for $l$ sufficiently large.  \end{proof}
We can write $u=\tilde{u}\circ\phi$ for some conformal prime harmonic map $\tilde{u}$ and a holomorphic map $\phi:T_\tau\to T_\tau$ with degree $deg(u)$. By the smooth convergence of $f_l\circ\iota_{\tau,\tau_l}\to u$, and $deg (f_{l})=deg(u)$, we can write $f_{l}$ as a branched cover of some prime harmonic map $g_{l}:\mathcal{T}_{\tau_l}\to M$, i.e., \[f_{l}=g_{l}\circ\big(\iota_{\tau,\tau_l}\circ\phi\circ\iota_{\tau_l,\tau}\big).\] 
So we obtain a sequence of conformal prime harmonic tori $g_l:\mathcal{T}_{\tau_l}\to M$ that converges strongly to a conformal prime harmonic map $\tilde{u}$ in $W^{1,2}$, and $[f_l]\neq[u]$ implies that $[g_l]\neq[\tilde{u}]$. It is the desired contradiction.
\end{proof}	
We define $\mathcal{F}^W$ to be the equivalent classes of all conformal harmonic tori with energy bounded by $W$, that is:	$$\mathcal{F}^W:=\Big\{[f]\mid f:\mathcal{T}_{\tau}\rightarrow M\text{ conformal and harmonic}, E(f,T_\tau)\leq W\Big\}.$$
\begin{proposition}\label{countable}
The set $\mathcal{F}^W$ is countable.
\end{proposition}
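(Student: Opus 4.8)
The plan is to show that every equivalence class in $\mathcal{F}^W$ is \emph{quantitatively isolated} — with isolation radius supplied by Lemma \ref{bumpyy} — inside a fixed separable metric space, and then to quote the elementary fact that a uniformly separated subset of a separable metric space is countable. Introduce $\mathcal{Y}:=\mathcal{M}\times W^{1,2}(T_0,\mathbb{R}^N)$ with the product metric $d_{\mathcal{Y}}\big((\tau,g),(\tau',g')\big)=|\tau-\tau'|+\|g-g'\|_{W^{1,2}(T_0)}$; this is a separable metric space, being a finite product of separable metric spaces. For each $[u]\in\mathcal{F}^W$ fix one representative $u:\mathcal{T}_{\tau_u}\to M$ with $\tau_u\in\mathcal{M}$ and set $P([u]):=(\tau_u,\,u\circ\iota_{\tau_u})\in\mathcal{Y}$. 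Distinct classes have distinct chosen representatives, hence $P:\mathcal{F}^W\hookrightarrow\mathcal{Y}$ is injective, and it suffices to prove $P(\mathcal{F}^W)$ is countable.

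The first concrete step is to stratify. For $[u]\in\mathcal{F}^W$, let $\epsilon([u])>0$ be the constant from Lemma \ref{bumpyy}. Since $\tau_u\in\mathcal{M}$ always has finite imaginary part, we may write
\[
\mathcal{F}^W=\bigcup_{n,R\in\mathbb{N}}\mathcal{F}^W_{n,R},\qquad
\mathcal{F}^W_{n,R}:=\Big\{[u]\in\mathcal{F}^W:\ \epsilon([u])>\tfrac1n,\ \operatorname{Im}\tau_u\le R\Big\}.
\]
As a countable union of countable sets is countable, it is enough to show that each $\mathcal{F}^W_{n,R}$ is countable, which will follow once $P(\mathcal{F}^W_{n,R})$ is shown to be uniformly separated in $\mathcal{Y}$.

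The separation estimate is the heart of the matter. Let $[u]\neq[f]$ lie in $\mathcal{F}^W_{n,R}$, with chosen representatives $u:\mathcal{T}_{\tau_u}\to M$, $f:\mathcal{T}_{\tau_f}\to M$. Using $\iota_{\tau_u,\tau_f}\circ\iota_{\tau_u}=\iota_{\tau_f}$ and the change of variables $\iota_{\tau_u}:T_0\to\mathcal{T}_{\tau_u}$ (a real-linear map whose gradient distortion is governed by a constant $C_1(\tau_u)$), one obtains
\[
\int_{\mathcal{T}_{\tau_u}}\big|\nabla(f\circ\iota_{\tau_u,\tau_f})-\nabla u\big|^2
\ \le\ C_1(\tau_u)\,\big\|f\circ\iota_{\tau_f}-u\circ\iota_{\tau_u}\big\|_{W^{1,2}(T_0)}^2 ,
\]
and $C_1(\tau_u)\le C_1(R):=\sup\{C_1(\tau):\tau\in\mathcal{M},\ \operatorname{Im}\tau\le R\}<\infty$ because $\{\tau\in\mathcal{M}:\operatorname{Im}\tau\le R\}$ is compact. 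Choose $\delta=\delta(n,R)>0$ with $\delta<\tfrac1n$ and $C_1(R)\delta^2<\tfrac1n$. If $d_{\mathcal{Y}}\big(P([u]),P([f])\big)<\delta$, then $|\tau_u-\tau_f|<\tfrac1n<\epsilon([u])$ and, by the displayed inequality, $\int_{\mathcal{T}_{\tau_u}}|\nabla(f\circ\iota_{\tau_u,\tau_f})-\nabla u|^2<\tfrac1n<\epsilon([u])$, so Lemma \ref{bumpyy} (applied to $u$) forces $[f]=[u]$, a contradiction. Hence $P(\mathcal{F}^W_{n,R})$ is $\delta(n,R)$-separated in $\mathcal{Y}$; mapping each of its points to a point of a fixed countable dense subset of $\mathcal{Y}$ within distance $\delta(n,R)/2$ gives an injection into that countable set, so $P(\mathcal{F}^W_{n,R})$, hence $\mathcal{F}^W_{n,R}$, is countable. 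Summing over $n,R$ finishes the argument.

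I expect the main obstacle to be the non-compactness of the moduli space $\mathcal{M}$: as $\tau$ approaches the cusp the identification maps $\iota_\tau$ degenerate, so the ``closeness'' in Lemma \ref{bumpyy} cannot be compared, with a single uniform constant, to the fixed metric $d_{\mathcal{Y}}$. The stratification by $\operatorname{Im}\tau\le R$ — legitimate precisely because any one conformal harmonic torus has a representative of finite modulus — confines us to a compact part of $\mathcal{M}$, which is exactly what makes $C_1(R)$ finite; one should take care that on that compact piece the distortion constant, and the comparison between $\|\cdot\|_{W^{1,2}(T_0)}$ and the energy over $\mathcal{T}_\tau$, depend boundedly (indeed continuously) on $\tau$. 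A minor bookkeeping point is verifying that $P$ is well defined and injective after fixing one representative per class, which is immediate from Definition \ref{equivalent class}; note that the bubble convergence Theorem \ref{torusconvergence} is not needed here, since the conformal automorphism group of a torus is compact and the only source of non-compactness, namely the cusp of $\mathcal{M}$, is absorbed by the stratification.
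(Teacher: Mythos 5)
Your proof is correct, and it takes a genuinely different route from the paper's. Both arguments hinge on the quantitative isolation supplied by Lemma~\ref{bumpyy}, but they exploit it through entirely different stratifications and entirely different countability mechanisms.

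The paper stratifies $\mathcal{F}^W$ by an energy-equidistribution constraint: $\mathcal{F}^W_R(n)$ consists of those classes represented by a map whose moduli parameter lies in $\bar B_R$ and whose energy on every ball $B_{1/n}(x_n^k)$ in a fixed finite cover is below $\epsilon_{su}$. This constraint is chosen precisely so that the a~priori estimate (Theorem~\ref{regularity}) applies, whence a sequence in the stratum has a subsequence converging strongly in $W^{1,2}$ and in fact smoothly; the limit lies in the same stratum, and Lemma~\ref{bumpyy} applied to the limit forces the tail of the sequence to collapse to one class. Thus the paper shows each stratum is \emph{finite}, with compactness doing the heavy lifting.

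You instead stratify by the isolation radius $\epsilon([u])$ itself and by $\operatorname{Im}\tau_u$, embed representatives into the separable space $\mathcal{M}\times W^{1,2}(T_0,\mathbb{R}^N)$, and show each stratum is uniformly separated there, hence countable. This avoids Theorem~\ref{regularity} and Arzel\`a--Ascoli entirely inside the proof of the proposition (they are of course still hidden inside the proof of Lemma~\ref{bumpyy}), and replaces a compactness argument with the elementary fact that a $\delta$-separated subset of a separable metric space is countable. What the paper's approach buys is the stronger conclusion that each stratum is finite; what yours buys is a cleaner reduction to abstract nonsense, with the only analysis being the uniform control of the distortion constant $C_1(\tau)$ of $\iota_\tau$ on $\{\tau\in\mathcal{M}:\operatorname{Im}\tau\le R\}$ (which you correctly identify as compact, so that $C_1(R)<\infty$, with the lower bound $\operatorname{Im}\tau\ge\sqrt{3}/2$ keeping the determinant away from zero).

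Two small points worth tidying. First, the statement of Lemma~\ref{bumpyy} fixes a representative $u$ in the hypothesis, so to make $\epsilon([u])$ a single well-defined number per class you should either take the supremum of all admissible $\epsilon$ for your chosen representative or appeal to the compactness of the conformal automorphism group $S^1\times S^1$ to show the class-level isolation radius is positive; either fix is routine. Second, when asserting $\int_{\mathcal{T}_{\tau_u}}|\nabla g|^2\le C_1(\tau_u)\int_{T_0}|\nabla(g\circ\iota_{\tau_u})|^2$, note that $\iota_{\tau_u}$ is \emph{not} conformal (it sends $\sqrt{-1}\mapsto\tau_u$), so the energy is genuinely distorted and the constant $C_1(\tau_u)$ is not $1$ -- you are clearly aware of this, but it is the one place where the estimate is not an identity, so it deserves the explicit mention you gave it.
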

\begin{proof}
For $B_R\subset\mathbb{C}$, we pick a finite set $\{x^1_n,...,x^{p^n}_n\}$ of $\mathbb{C}$ such that $\bar{B}_R\subset\bigcup_{k=1}^{p^n} B_{\frac{1}{n}}(x^k_n).$ We define $\mathcal{F}_R^W(n)$ to be:
\begin{equation}
    \begin{split}
    \mathcal{F}_R^W(n)=\Big\{[f]\mid f\in\mathcal{F}^W&,\:f:\mathcal{T}_{\tau}\to M,\:\{1,\tau\}\in B_R\\
&\int_{B_{\frac{1}{n}}(x^k_n)}|\nabla f|^2<\epsilon_{su},\:\text{for all  }k=1,...,p^n\Big\},        
    \end{split}
\end{equation}
where $\{1,\tau\}\in B_R$ means that $\bar{B}_R$ contains all parallelograms generates by $\{1,\tau\}$, and $\epsilon_{su}>0$ is the constant given in Theorem \ref{regularity}. We can see that:
$$\mathcal{F}^W=\bigcup_{n\in\mathbb{N},R\in\mathbb{R}}\mathcal{F}_R^W(n).$$ 
Now we prove that $\mathcal{F}^W(n)$ is finite. If not, then there exists a sequence of conformal harmonic tori $\{[f_l]\}_{l\in\mathbb{N}}\subset \mathcal{F}_R^W(n)$, $f_l:\mathcal{T}_{\tau_l}\to M$, with $[f_i]\neq[f_j]$. Because $\bar{B}_R$ is compact we have that $\tau_l\to\tau$ up to subsequence and $f_l$ converges to a conformal harmonic map $f:T_\tau\to M$, up to subsequence. The assumption $\{[f_l]\}_{l\in\mathbb{N}}\subset \mathcal{F}_R^W(n)$ implies that
$$\int_{B_{1/n}(x^k_n)}|\nabla f_l|^2<\epsilon_{su}\quad\forall l\in\mathbb{N},k=1,...,p^n.$$ 
By Theorem \ref{regularity}, this implies that the convergence is strong in $W^{1,2}$ and $f\in\mathcal{F}^W(n)$. Then it contradicts Lemma \ref{bumpyy}.
		\end{proof}	

\bibliographystyle{alpha}
\bibliography{main.bib}	

\begin{thebibliography}{Moo17}

\bibitem[CM08]{CD}
Tobias~H. Colding and William~P. Minicozzi, II.
\newblock Width and finite extinction time of {R}icci flow.
\newblock {\em Geom. Topol.}, 12(5):2537--2586, 2008.

\bibitem[CM11]{CDD}
Tobias~Holck Colding and William~P. Minicozzi, II.
\newblock {\em A course in minimal surfaces}, volume 121 of {\em Graduate
  Studies in Mathematics}.
\newblock American Mathematical Society, Providence, RI, 2011.

\bibitem[CM18]{CC}
Otis {Chodosh} and Christos {Mantoulidis}.
\newblock {Minimal surfaces and the Allen-Cahn equation on 3-manifolds: index,
  multiplicity, and curvature estimates}.
\newblock {\em arXiv e-prints}, page arXiv:1803.02716, Mar 2018.

\bibitem[CS85]{CHS}
Hyeong~In Choi and Richard Schoen.
\newblock The space of minimal embeddings of a surface into a three-dimensional
  manifold of positive {R}icci curvature.
\newblock {\em Invent. Math.}, 81(3):387--394, 1985.

\bibitem[CZ21]{cheng2021existence}
Da~Rong Cheng and Xin Zhou.
\newblock Existence of constant mean curvature 2-spheres in riemannian
  3-spheres, 2021.

\bibitem[EM08]{Mario}
Norio Ejiri and Mario Micallef.
\newblock Comparison between second variation of area and second variation of
  energy of a minimal surface.
\newblock {\em Adv. Calc. Var.}, 1(3):223--239, 2008.

\bibitem[Hir94]{dt}
Morris~W. Hirsch.
\newblock {\em Differential topology}, volume~33 of {\em Graduate Texts in
  Mathematics}.
\newblock Springer-Verlag, New York, 1994.
\newblock Corrected reprint of the 1976 original.

\bibitem[Mil63]{Morse}
J.~Milnor.
\newblock {\em Morse theory}.
\newblock Based on lecture notes by M. Spivak and R. Wells. Annals of
  Mathematics Studies, No. 51. Princeton University Press, Princeton, N.J.,
  1963.

\bibitem[MM88]{MD}
Mario~J. Micallef and John~Douglas Moore.
\newblock Minimal two-spheres and the topology of manifolds with positive
  curvature on totally isotropic two-planes.
\newblock {\em Ann. of Math. (2)}, 127(1):199--227, 1988.

\bibitem[MN16]{MN}
Fernando~C. Marques and Andr\'{e} Neves.
\newblock Morse index and multiplicity of min-max minimal hypersurfaces.
\newblock {\em Camb. J. Math.}, 4(4):463--511, 2016.

\bibitem[MN18]{MN2}
Fernando~C. {Marques} and Andr{\'e} {Neves}.
\newblock {Morse index of multiplicity one min-max minimal hypersurfaces}.
\newblock {\em arXiv e-prints}, page arXiv:1803.04273, Mar 2018.

\bibitem[Moo17]{JD}
John~Douglas Moore.
\newblock {\em Introduction to global analysis}, volume 187 of {\em Graduate
  Studies in Mathematics}.
\newblock American Mathematical Society, Providence, RI, 2017.
\newblock Minimal surfaces in Riemannian manifolds.

\bibitem[Pit81]{AP}
Jon~T. Pitts.
\newblock {\em Existence and regularity of minimal surfaces on {R}iemannian
  manifolds}, volume~27 of {\em Mathematical Notes}.
\newblock Princeton University Press, Princeton, N.J.; University of Tokyo
  Press, Tokyo, 1981.

\bibitem[PR20]{VPT}
Alessandro Pigati and Tristan Rivière.
\newblock A proof of the multiplicity 1 conjecture for min-max minimal surfaces
  in arbitrary codimension.
\newblock {\em Duke Mathematical Journal}, 169(11), Aug 2020.

\bibitem[Riv17]{Viscosity}
Tristan Rivi\`ere.
\newblock A viscosity method in the min-max theory of minimal surfaces.
\newblock {\em Publ. Math. Inst. Hautes \'{E}tudes Sci.}, 126:177--246, 2017.

\bibitem[Sha17]{BS}
Ben Sharp.
\newblock Compactness of minimal hypersurfaces with bounded index.
\newblock {\em J. Differential Geom.}, 106(2):317--339, 2017.

\bibitem[Sim96]{LS}
Leon Simon.
\newblock {\em Theorems on regularity and singularity of energy minimizing
  maps}.
\newblock Lectures in Mathematics ETH Z\"{u}rich. Birkh\"{a}user Verlag, Basel,
  1996.
\newblock Based on lecture notes by Norbert Hungerb\"{u}hler.

\bibitem[SU81]{SU}
J.~Sacks and K.~Uhlenbeck.
\newblock The existence of minimal immersions of {$2$}-spheres.
\newblock {\em Ann. of Math. (2)}, 113(1):1--24, 1981.

\bibitem[Sun20]{YS}
Yuchin Sun.
\newblock Morse index bound for minimal two spheres, 2020.

\bibitem[TW20]{MTW}
Chung-Jun Tsai and Mu-Tao Wang.
\newblock A strong stability condition on minimal submanifolds and its
  implications.
\newblock {\em J. Reine Angew. Math.}, 764:111--156, 2020.

\bibitem[Whi17]{BW}
Brian White.
\newblock On the bumpy metrics theorem for minimal submanifolds.
\newblock {\em Amer. J. Math.}, 139(4):1149--1155, 2017.

\bibitem[Zho10]{minimaltorus}
Xin Zhou.
\newblock On the existence of min-max minimal torus.
\newblock {\em J. Geom. Anal.}, 20(4):1026--1055, 2010.

\bibitem[Zho17]{minmaxgenus}
Xin Zhou.
\newblock On the existence of min-max minimal surface of genus {$g\geq 2$}.
\newblock {\em Commun. Contemp. Math.}, 19(4):1750041, 36, 2017.

\bibitem[Zho20]{Mul1}
Xin Zhou.
\newblock On the multiplicity one conjecture in min-max theory.
\newblock {\em Ann. of Math. (2)}, 192(3):767--820, 2020.

\end{thebibliography}
\end{document}